\newcounter{mnote}
\let\oldmarginpar\marginpar
\renewcommand\marginpar[1]{\-\oldmarginpar[\raggedleft\footnotesize #1]%
{\raggedright\footnotesize #1}}
\newtheorem{theorem}{Theorem}[section]
\newtheorem{lemma}[theorem]{Lemma}
\newtheorem{example}[theorem]{Example}
\newtheorem{remark}[theorem]{Remark}
\newcommand{\dx}{\,{\rm d}x}
\newcommand{\curl}{\operatorname{curl}}
\renewcommand{\div}{\operatorname{div}}
\newcommand{\grad}{\operatorname{grad}}
\newcommand{\dev}{\operatorname{dev}}
\newcommand{\sym}{\operatorname{sym}}
\newcommand{\skw}{\operatorname{skw}}
\newcommand{\vskw}{\operatorname{vskw}}
\begin{document}
\title[Nonconforming Linear FEM for Tensor-Valued Stokes]{Nonconforming Linear Element Method for a Generalized Tensor-Valued Stokes Equation with Application to the Triharmonic Equation}
\author{Ziwen Gu}%
\address{School of Mathematics, Shanghai University of Finance and Economics, Shanghai 200433, China}%
\email{2024310134@stu.sufe.edu.cn}%
\author{Xuehai Huang}%
\address{School of Mathematics, Shanghai University of Finance and Economics, Shanghai 200433, China}%
\email{huang.xuehai@sufe.edu.cn}%


\subjclass[2010]{
65N30;   
65N12;   
65N22;   
}

\begin{abstract}
A nonconforming linear element method is developed for a three-dimensional generalized tensor-valued Stokes equation associated with the Hessian complex in this paper. A discrete Helmholtz decomposition for the piecewise constant space of traceless tensors is established, ensuring the well-posedness of the nonconforming method, and optimal error estimates are derived. Building on this, a low-order decoupled finite element method for the three-dimensional triharmonic equation is constructed by combining the Morley-Wang-Xu element methods for the biharmonic subproblems with the proposed nonconforming linear element method. Numerical experiments confirm the theoretical convergence rates.
\end{abstract}
\keywords{generalized tensor-valued Stokes equation, nonconforming linear element method, Helmholtz decomposition, triharmonic equation, decoupled finite element method}

\maketitle


\section{Introduction}
\label{intro}
Let $ \Omega \subset \mathbb{R}^3 $ be a bounded polyhedral domain. In this paper, we develop a low-order finite element method for the following generalized tensor-valued Stokes equation with a given right-hand side $ \boldsymbol{g} \in L^2(\Omega; \mathbb{S})$: Find $\boldsymbol{\sigma} \in H_0^1(\Omega; \mathbb{S})$, $\boldsymbol{p}\in H_0(\div,\Omega; \mathbb{T})$, and $\boldsymbol{r}\in(H^1(\Omega; \mathbb{R}^3) / \textrm{RT})$ such that
\begin{equation}\label{generalized_stokes}
\left\{
\begin{aligned}
-\Delta\boldsymbol{\sigma}+\sym\curl\boldsymbol{p} = &\, \boldsymbol{g}     && \mathrm{in}\,\,\Omega,\\
\curl \boldsymbol{\sigma} + \dev\grad \boldsymbol{r} = &\, 0  && \mathrm{in}\,\, \Omega,\\
\div \boldsymbol{p} = &\, 0  && \mathrm{in}\,\, \Omega,
\end{aligned}\right.
\end{equation}
where $H_0^1(\Omega; \mathbb{S}) := H_0^1(\Omega) \otimes \mathbb{S}$, and $H^1(\Omega; \mathbb{R}^3) / \mathrm{RT}$ denotes the subspace of $H^1(\Omega; \mathbb{R}^3)$ orthogonal to $\textrm{RT} := \{a \boldsymbol{x} + \boldsymbol{b} : a \in \mathbb{R}, \boldsymbol{b}\in \mathbb{R}^3\} $ under the $L^2$ inner product. The divergence-related spaces are defined as
\begin{align*}
H(\div,\Omega; \mathbb{T}) & := \{\boldsymbol{\tau} \in L^{2}(\Omega; \mathbb{T} )
: \div \boldsymbol{\tau} \in L^{2}(\Omega; \mathbb{R}^3)\},\\
H_{0}(\div,\Omega; \mathbb{T} ) & := \{\boldsymbol{\tau} \in H(\div,\Omega; \mathbb{T})
: \boldsymbol{\tau} \boldsymbol{n} = 0 \,\, \mathrm{on}\,\, \partial\Omega\}.
\end{align*}
Here, $\mathbb{S}$ and $\mathbb{T}$ are subspaces of $\mathbb{M} := \mathbb{R}^{3\times3}$ consisting of symmetric and traceless matrices, respectively.
The first equation in \eqref{generalized_stokes} is understood in $H^{-1}(\Omega;\mathbb S)$. More precisely, for $\boldsymbol{\tau}\in H_0^1(\Omega;\mathbb S)$,
\begin{equation*}
\left\langle -\Delta\boldsymbol{\sigma}+\sym\curl\boldsymbol{p},\boldsymbol{\tau}\right\rangle
:=(\nabla\boldsymbol{\sigma},\nabla\boldsymbol{\tau})
+(\boldsymbol{p},\curl\boldsymbol{\tau}).
\end{equation*}
Thus, the operators $-\Delta$ and $\sym\curl$ in the first equation are not assumed to be strong differential operators.
The generalized tensor-valued Stokes equation \eqref{generalized_stokes} arises in applications such as the triharmonic equation \cite{ChenHuang2018,Gallistl2017}.

The well-posedness of problem \eqref{generalized_stokes} is closely connected to the following Hessian complex:
\begin{equation}\label{intro:hessiancomplex}
	0\xrightarrow{\subset} H_0^3(\Omega)\xrightarrow{\nabla^2} H_0^1(\Omega;\mathbb S)\xrightarrow{\curl} H_0(\div,\Omega;\mathbb T)\xrightarrow{\div}L^2(\Omega; \mathbb{R}^3) / \textnormal{RT} \xrightarrow{}0.
\end{equation}
This complex is smoother than the domain complex of the Hessian complex:
\begin{equation}\label{intro:hessiancomplexH2}
	0\xrightarrow{\subset} H_0^2(\Omega)\xrightarrow{\nabla^2} H_0(\curl,\Omega;\mathbb S)\xrightarrow{\curl} H_0(\div,\Omega;\mathbb T)\xrightarrow{\div}L^2(\Omega; \mathbb{R}^3) / \textnormal{RT} \xrightarrow{}0,
\end{equation}
which has applications in solving the biharmonic equation~\cite{ChenHuang2025a,ChenHuHuang2018,PaulyZulehner2020} and the Einstein-Bianchi equation~\cite{QuennevilleBelair2015}.
Conforming discretizations of these Hessian complexes \eqref{intro:hessiancomplex}-\eqref{intro:hessiancomplexH2} have been recently developed in~\cite{ChenHuang2025,ChenHuang2022,HuLiang2021,HuLiangLin2024}. However, such discretizations require high-order polynomial spaces and supersmooth degrees of freedom (DoFs), which limits their practical efficiency for solving the generalized tensor-valued Stokes equation \eqref{generalized_stokes}.

To date, numerical studies for problem \eqref{generalized_stokes} are scarce. The only available approaches, to the best of our knowledge, are the low-order mixed finite element methods proposed in~\cite[Section~5.3]{Gallistl2017}, which involve two unknowns, $\boldsymbol{\sigma}$ and $\boldsymbol{p}$. In that work, $\boldsymbol{p}$ is discretized using
\begin{equation*}
	P_h = \{\boldsymbol{p} \in H_0(\div, \Omega; \mathbb{T}) : \boldsymbol{p}|_T \text{ is constant on each } T \in \mathcal{T}_h \}.
\end{equation*}
However, as the space $P_h$ lacks local DoFs, its precise implementation remains unclear and, as noted in~\cite{ChenHuang2022,ChenHuang2024a}, may rely on vertex-associated DoFs.

To circumvent the vertex DoFs of $H(\div)$-conforming finite elements for traceless tensors, we adopt the
following weak formulation of problem \eqref{generalized_stokes}: Find $(\boldsymbol{\sigma}, \boldsymbol{p}, \boldsymbol{r})  \in H_0^1(\Omega; \mathbb{S}) \times L^2(\Omega; \mathbb{T}) \times (H^1(\Omega; \mathbb{R}^3) / \textrm{RT})$ such that
\begin{subequations}\label{intro:continue_generalized_stokes}
\begin{align}
( \nabla \boldsymbol{\sigma}, \nabla \boldsymbol{\tau}) + ( \curl \boldsymbol{\tau} + \dev\grad \boldsymbol{s}, \boldsymbol{p}) & = ( \boldsymbol{g}, \boldsymbol{\tau} ), && \label{intro:continue_generalized_stokes1} \\
( \curl \boldsymbol{\sigma} + \dev\grad \boldsymbol{r}, \boldsymbol{q}) & = 0,  &&  \label{intro:continue_generalized_stokes2}
\end{align}
for all $(\boldsymbol{\tau}, \boldsymbol{q}, \boldsymbol{s})  \in H_0^1(\Omega; \mathbb{S}) \times L^2(\Omega; \mathbb{T}) \times (H^1(\Omega; \mathbb{R}^3) / \textrm{RT})$.
\end{subequations}
In other words, $\boldsymbol{p}$ is taken in $L^2(\Omega; \mathbb{T})$ instead of $H_0(\div, \Omega; \mathbb{T})$, thereby avoiding vertex DoFs.
The well-posedness of the weak formulation \eqref{intro:continue_generalized_stokes} is established via the following Helmholtz decomposition for traceless tensors:
\begin{equation}\label{intro:helmholtzdecomposition}
L^2(\Omega; \mathbb{T} ) = \curl H_0^1(\Omega; \mathbb{S}) \oplus \dev\grad( H^1(\Omega; \mathbb{R}^3)/\textnormal{RT} ),
\end{equation}
which can equivalently be expressed as the short complex:
\begin{equation*}
H_{0}^{1}(\Omega;\mathbb S)\times H^{1}(\Omega;\mathbb R^3) \xrightarrow{(\curl, \dev\grad)} L^2(\Omega; \mathbb{T}) \to 0.
\end{equation*}

We discretize $\boldsymbol{\sigma}$ and $\boldsymbol{r}$ using nonconforming linear elements and $\boldsymbol{p}$ using piecewise constants: Find
$(\boldsymbol{\sigma}_h, \boldsymbol{p}_h, \boldsymbol{r}_h) \in  \mathring{V}_h^{\mathbb{S}} \times \mathbb{P}_0(\mathcal{T}_h; \mathbb{T}) \times  V_h $ such that
\begin{subequations}\label{intro:discrete_generalized_stokes}
\begin{align}
a_h(\boldsymbol{\sigma}_h, \boldsymbol{r}_h; \boldsymbol{\tau}, \boldsymbol{s}) + b_h(\boldsymbol{\tau}, \boldsymbol{s}; \boldsymbol{p}_h ) & = ( \boldsymbol{g}_h, \boldsymbol{\tau} ), && \forall\, \boldsymbol{\tau} \in \mathring{V}_h^{\mathbb{S}}, \boldsymbol{s} \in V_h , \label{intro:discrete_generalized_stokes_1}        \\
b_h(\boldsymbol{\sigma}_h, \boldsymbol{r}_h; \boldsymbol{q} ) & = 0, && \forall\, \boldsymbol{q} \in \mathbb{P}_0(\mathcal{T}_h; \mathbb{T}), \label{intro:discrete_generalized_stokes_2}
\end{align}
\end{subequations}
where $\boldsymbol{g}_h\in L^2(\Omega; \mathbb{S})$ is an approximation of $\boldsymbol{g}$, and the discrete bilinear forms
\begin{align*}
a_h(\boldsymbol{\sigma}, \boldsymbol{r}; \boldsymbol{\tau}, \boldsymbol{s}) & := ( \nabla_h \boldsymbol{\sigma}, \nabla_h \boldsymbol{\tau} ) + \sum_{F \in \Delta_{2}(\mathcal{T}_h) } h_F^{-1} ( [\boldsymbol{r}], [\boldsymbol{s}] )_{F},  \\
b_h(\boldsymbol{\tau}, \boldsymbol{s}; \boldsymbol{p} ) & := ( \curl_h \boldsymbol{\tau} + \dev\grad _h \boldsymbol{s}, \boldsymbol{p}).
\end{align*}
Here, $\mathring{V}_h^{\mathbb{S}}$ is the nonconforming linear element space with homogeneous boundary condition for symmetric tensors, $V_h$ is the nonconforming linear element space for vectors, and $\mathbb{P}_0(\mathcal{T}_h; \mathbb{T})$ is the piecewise constant space for traceless tensors.
The term $\sum_{F \in \Delta_{2}(\mathcal{T}_h) } h_F^{-1} \left( [\boldsymbol{r}], [\boldsymbol{s}] \right)_{F}$ is incorporated into the bilinear form $a_h(\cdot, \cdot; \cdot, \cdot)$ to guarantee its discrete coercivity and the uniqueness of $\boldsymbol{r}_h$, as ensured by the broken Korn-type inequality \eqref{discrete_korn_rt}.
We refer to \cite{AnHuangZhang2024,Gallistl2017,Schedensack2016} for some finite element methods for generalized tensor-valued Stokes equations in two dimensions.

Analogously to the Helmholtz decomposition \eqref{intro:helmholtzdecomposition}, we establish the discrete Helmholtz decomposition
\begin{equation*}
\mathbb{P}_0(\mathcal{T}_h; \mathbb{T}) = \textup{curl}_h \mathring{V}_h^{\mathbb{S}} + \dev\grad_h V_h,
\end{equation*}
ensuring the well-posedness of the nonconforming linear element method \eqref{intro:discrete_generalized_stokes}. Optimal error estimates are derived for this nonconforming linear element method.

We further employ this nonconforming linear element method to construct a low-order decoupled finite element method for the three-dimensional triharmonic equation with $f \in L^2(\Omega)$: Find $u\in H_0^3(\Omega)$ such that
\begin{equation}\label{triharmonic}
-\Delta^{3}u = f  \quad \mathrm{in}\,\,\Omega.
\end{equation}
This sixth-order elliptic partial differential equation arises in numerous applications, including gradient-elastic Kirchhoff plate problems \cite{ChenHuangHuang2024,KP2008}, high-order phase-field models \cite{PawlowZajaczkowski2011,SchimpernaPawlow2013}, and thin film problems \cite{TF1}.
Conforming discretizations require $C^2$-continuous polynomial spaces~\cite{HuLinWu2024,ChenChenGaoHuangEtAl2025,ChenHuang2021,ChenHuang2024}, which suffer from supersmooth DoFs and higher polynomial degrees. We refer to \cite{WangXu2013,LiWu2025,JinWu2025} for nonconforming finite element methods, \cite{DroniouIlyasLamichhaneWheeler2019,DassiMoraRealesVelasquez2024} for mixed methods, \cite{ChenHuang2020,Huang2020,ChenHuangWei2022} for virtual element methods and \cite{GudiNeilan2011,DG2017,ChenLiQiu2022} for discontinuous Galerkin methods.

Following the methodology from \cite{ChenHuang2018}, the triharmonic equation \eqref{triharmonic} is decomposed into two biharmonic equations and one generalized tensor-valued Stokes equation.
The decoupled formulation facilitates the development of efficient finite element methods and the design of fast solvers.
Specifically, we discretize the two biharmonic equations using the Morley-Wang-Xu element~\cite{morley2006}, and approximate the generalized tensor-valued Stokes equation using the nonconforming linear element method~\eqref{intro:discrete_generalized_stokes}. This results in a low-order decoupled finite element method for the three-dimensional triharmonic equation, for which we derive optimal error estimates. For alternative decoupled methods for solving the triharmonic equation, we refer to \cite{Gallistl2017,AnHuangZhang2024,Schedensack2016,C0triharmonic}.

The rest of the paper is organized as follows. The well-posedness of the generalized tensor-valued Stokes equation is established in Section \ref{section2}.
Section \ref{section3} focuses on the nonconforming linear element method for the generalized tensor-valued Stokes equation and its error analysis. Section \ref{section4} presents a low-order decoupled method for the triharmonic equation. Numerical experiments are presented in Section \ref{section5} to validate the theoretical results.

\section{Generalized Tensor-Valued Stokes Equation}\label{section2}

In this section, we introduce the notation used throughout the paper, and establish the well-posedness of the weak formulation associated with the generalized tensor-valued Stokes equation~\eqref{generalized_stokes}.

\subsection{Notation}
Denote by $\mathbb{M}$ the space of all $3 \times 3$ matrices, by $\mathbb{S}$ the subspace of symmetric matrices, and by $\mathbb{T}$ the subspace of traceless matrices. For a matrix $\boldsymbol{\tau} \in \mathbb{M} $, we can decompose it into the traceless part and the diagonal part:
\begin{equation*}
\boldsymbol{\tau} = \dev \boldsymbol{\tau} + \frac{1}{3}\text{tr}(\boldsymbol{\tau})\boldsymbol{I} := (\boldsymbol{\tau} - \frac{1}{3}\text{tr}(\boldsymbol{\tau})\boldsymbol{I}) + \frac{1}{3}\text{tr}(\boldsymbol{\tau})\boldsymbol{I}.
\end{equation*}
Denote by $\sym\boldsymbol{\tau}$ the symmetric part of $\boldsymbol{\tau}$, i.e., $\sym\boldsymbol{\tau} = (\boldsymbol{\tau} + \boldsymbol{\tau}^{\intercal})/2$. We also denote the skew-symmetric part by
$\skw\boldsymbol{\tau}:=(\boldsymbol{\tau}-\boldsymbol{\tau}^{\intercal})/2$. For $\boldsymbol{\tau}=(\tau_{ij})_{i,j=1}^3\in\mathbb M$, define
\[
\vskw\boldsymbol{\tau}:=\frac12(\tau_{32}-\tau_{23},\,\tau_{13}-\tau_{31},\,\tau_{21}-\tau_{12})^{\intercal},
\]
so that $\vskw$ is the axial-vector map associated with the skew-symmetric part of $\boldsymbol{\tau}$.

Given an integer $m \geq 0$ and a bounded domain $D \subset \mathbb{R}^{3}$, we define $H^{m}(D)$  as the standard Sobolev space of functions on $D$. The corresponding norm and seminorm are denoted by $\| \cdot \|_{ m,D}$ and $|\cdot |_{m,D}$, respectively. We abbreviate $\|\cdot\|_{0,D}$ as $\|\cdot\|_{D}$. Set $L^{2}(D) = H^{0}(D)$.  Let $L_{0}^{2}(D)$ be the space of functions in $L^{2}(D)$ with vanishing integral average values. For a space $B(D)$ defined on $D$, let $B(D;\mathbb{X}):=B(D) \otimes \mathbb{X}$ be its vector or tensor version, where the tensor space $ \mathbb{X} $ can be taken as $\mathbb{R}^{3}, \mathbb{M}, \mathbb{S}, \mathbb{T} $, etc. We denote $(\cdot, \cdot)_{D}$ as the usual inner product on $L^{2}(D)$ or $L^{2}(D; \mathbb{X})$. We denote $H_{0}^{m}(D)\,(H_{0}^{m}(D; \mathbb{X}))$ as the closure of $C_{0}^{\infty}(D)\,(C_{0}^{\infty}(D; \mathbb{X}))$ with respect to the norm $\| \cdot \|_{m,D}$. When $D$ is $\Omega$, we abbreviate $\|\cdot\|_{D}$, $\| \cdot \|_{ m,D}$, $|\cdot|_{m,D}$ and $(\cdot, \cdot)_{D}$  as $\|\cdot\|$, $\| \cdot \|_{m}$, $|\cdot|_{m}$ and $(\cdot, \cdot)$, respectively.

We use $\boldsymbol{n}_{\partial D}$ to denote the unit outward normal vector of $\partial D$, which will be abbreviated as $\boldsymbol{n}$ if it does not cause any confusion. Let $\{\mathcal{T}_{h}\}_{h>0}$ be a regular family of tetrahedral meshes of $\Omega$, where $h=\max_{T \in \mathcal{T}_{h}}h_T$ with $h_T$ being the diameter of tetrahedron $T$. For $\ell=0,1,2$, denote by $\Delta_{\ell}(\mathcal T_h)$ and $\Delta_{\ell}(\mathring{\mathcal T}_h)$ the set of all subsimplices and all interior subsimplices of dimension $\ell$ in the partition $\mathcal{T}_{h}$, respectively.
For a face $F\in \Delta_{2}(\mathcal T_h)$, we choose one of the two unit normal vectors to $F$ arbitrarily and keep this choice fixed; the chosen vector is denoted by $\boldsymbol{n}_F$. All jumps below are taken with respect to this fixed orientation.
For two adjacent tetrahedra $T_1$ and $T_2$ sharing an interior face $F$, we define the jump of a function $w$ on $F$ as
\begin{equation*}
[w]:=(w|_{T_1})|_F\boldsymbol{n}_F\cdot\boldsymbol{n}_{\partial T_1}+(w|_{T_2})|_F\boldsymbol{n}_F\cdot\boldsymbol{n}_{\partial T_2}.
\end{equation*}
On a face $F$ lying on the boundary $\partial\Omega$, the jump becomes $[w]:=w|_F$.

We denote the gradient operator, curl operator and divergence operator as $\grad\,(\nabla)$, $\curl$ and $\text{div}$, and let $\grad_{h}\, (\nabla_h)$, $\curl_{h}$  and $\text{div}_{h}$  be the element-wise counterpart of $\grad$, $\curl$ and $\text{div}$ with respect to $\mathcal{T}_{h}$. For a tensor-valued function, these operators are applied row-wise.
We introduce the Sobolev spaces
\begin{align*}
H(\div\div, D; \mathbb{S}) & := \{\boldsymbol{\tau} \in L^{2}(D; \mathbb{S})
: \div\div \boldsymbol{\tau} \in L^{2}(D) \},\\
H(\sym\curl, D; \mathbb{T}) & := \{\boldsymbol{\tau} \in L^{2}(D; \mathbb{T})
: \sym\curl \boldsymbol{\tau} \in L^{2}(D; \mathbb{S} )\},\\
H(\text{div}, D; \mathbb{T}) & := \{\boldsymbol{\tau} \in L^{2}(D; \mathbb{T} )
: \div \boldsymbol{\tau} \in L^{2}(D; \mathbb{R}^3)\},\\
H_{0}(\text{div}, D; \mathbb{T} ) & := \{\boldsymbol{\tau} \in H(\text{div},D; \mathbb{T})
: \boldsymbol{\tau} \boldsymbol{n} = 0 \,\, \mathrm{on}\,\, \partial D\}.
\end{align*}
For an integer $k\geq0$, let $\mathbb{P}_{k}(D)$ denote the space of all polynomials in $D$ of total degree at most $k$, and introduce the piecewise smooth spaces
\[
H^1(\mathcal{T}_h):= \{ v \in L^2(\Omega) : v|_T \in H^1(T) \ \text{for all } T \in \mathcal{T}_h \},
\]
\[
\mathbb{P}_k(\mathcal{T}_h) := \{ v \in L^2(\Omega) : v|_T \in \mathbb{P}_k(T) \ \text{for all } T \in \mathcal{T}_h \}.
\]
For a piecewise smooth function $v$, define the following broken seminorms:
\begin{equation*}
|v|_{s,h}^2 :=  \sum_{T\in\mathcal{T}_h} | v |_{s,T}^2,\quad
\interleave v\interleave_{1,h}^2 :=  |v|_{1,h}^2+\sum_{F \in\Delta_{2}(\mathcal T_h)}  h_F^{-1}\|[v]\|_{F}^2.
\end{equation*}
By the broken Poincar\'e inequality in \cite[(1.8)]{Brenner2003}, $\interleave v\interleave_{1,h}$ is a norm on the space $H^1(\mathcal{T}_h)$.

In this paper, we use ``$\lesssim \cdots$'' to mean that ``$\leq C \cdots$'', where $C$ is a generic positive constant independent of $h$, which may take different values in different contexts. Moreover, $A \eqsim B$ means that $A \lesssim B$ and $B \lesssim A$.

\subsection{Hessian complex and Helmholtz decomposition}
We start by presenting a Hessian complex and a Helmholtz decomposition for the traceless tensor space $L^2(\Omega; \mathbb{T})$.
\begin{lemma}
For a contractible domain $\Omega$, the following exact sequence holds:
\begin{equation}\label{hessiancomplex}
0\xrightarrow{\subset} H_0^3(\Omega)\xrightarrow{\nabla^2} H_0^1(\Omega;\mathbb S)\xrightarrow{\curl} H_0(\div,\Omega;\mathbb T)\xrightarrow{\div}L^2(\Omega; \mathbb{R}^3) / \textnormal{RT} \xrightarrow{}0.
\end{equation}

\end{lemma}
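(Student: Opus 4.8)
The plan is to verify exactness of \eqref{hessiancomplex} at each of its four positions, reducing everything to de Rham complexes with homogeneous boundary conditions and to the Bernstein--Gelfand--Gelfand diagram \eqref{eq:hessBGG}. First I would record the facts that make \eqref{hessiancomplex} a complex: if $u\in H_0^3(\Omega)$ then $\nabla u\in H_0^2(\Omega;\mathbb R^3)$, hence $\nabla^2u\in H_0^1(\Omega;\mathbb S)$, and $\curl\nabla^2u=0$ since each row of $\nabla^2u$ is a gradient; for $\boldsymbol\sigma\in H_0^1(\Omega;\mathbb S)$ one has $\tr\curl\boldsymbol\sigma=2\div\vskw\boldsymbol\sigma=0$ by the anti-commutativity of \eqref{eq:hessBGG}, together with $\div\curl\boldsymbol\sigma=0$ and $(\curl\boldsymbol\sigma)\boldsymbol n=0$ (this trace depends only on the vanishing tangential trace of $\boldsymbol\sigma$), so that $\curl H_0^1(\Omega;\mathbb S)\subseteq H_0(\div,\Omega;\mathbb T)\cap\ker(\div)$; and for $\boldsymbol p\in H_0(\div,\Omega;\mathbb T)$ and $\boldsymbol w=a\boldsymbol x+\boldsymbol b\in\mathrm{RT}$, integration by parts gives $(\div\boldsymbol p,\boldsymbol w)=-(\boldsymbol p,a\boldsymbol I)=-a(\tr\boldsymbol p,1)=0$, so $\div\boldsymbol p$ is $L^2$-orthogonal to $\mathrm{RT}$ and the last arrow of \eqref{hessiancomplex} is well defined on the quotient.

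Injectivity of $\nabla^2$ on $H_0^3(\Omega)$ is immediate, since $\nabla^2u=0$ forces $u\in\mathbb P_1(\Omega)\cap H_0^3(\Omega)=\{0\}$, and exactness at $H_0(\div,\Omega;\mathbb T)$ is precisely the preceding lemma, identity \eqref{eq:curlH1S}. For exactness at $H_0^1(\Omega;\mathbb S)$ I only need the nontrivial inclusion: given $\boldsymbol\sigma\in H_0^1(\Omega;\mathbb S)$ with $\curl\boldsymbol\sigma=0$, view $\boldsymbol\sigma\in H_0^1(\Omega;\mathbb M)$; exactness of the bottom row of \eqref{eq:hessBGG} at $H_0^1(\Omega;\mathbb M)$ (de Rham with homogeneous boundary conditions \cite{CostabelMcIntosh2010}) gives $\boldsymbol\sigma=\grad\boldsymbol v$ with $\boldsymbol v\in H_0^2(\Omega;\mathbb R^3)$; symmetry of $\boldsymbol\sigma$ forces $\vskw\grad\boldsymbol v=0$, i.e. $\curl\boldsymbol v=0$, and the de Rham complex $H_0^3(\Omega)\xrightarrow{\grad}H_0^2(\Omega;\mathbb R^3)\xrightarrow{\curl}\cdots$ then yields $\boldsymbol v=\grad u$ with $u\in H_0^3(\Omega)$, so $\boldsymbol\sigma=\nabla^2u$.

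It remains to prove surjectivity of $\div\colon H_0(\div,\Omega;\mathbb T)\to L^2(\Omega;\mathbb R^3)/\mathrm{RT}$; combined with the $L^2$-orthogonality above, this reduces to showing that every $\boldsymbol f\in L^2(\Omega;\mathbb R^3)$ with $\boldsymbol f\perp\mathrm{RT}$ lies in $\div H_0(\div,\Omega;\mathbb T)$. Such an $\boldsymbol f$ has vanishing integral mean (the constant fields lie in $\mathrm{RT}$), so surjectivity of $\div$ on the matrix de Rham complex produces $\boldsymbol q\in H_0(\div,\Omega;\mathbb M)$ with $\div\boldsymbol q=\boldsymbol f$; testing against $\boldsymbol x\in\mathrm{RT}$ gives $(\tr\boldsymbol q,1)=-(\div\boldsymbol q,\boldsymbol x)=-(\boldsymbol f,\boldsymbol x)=0$, hence $\tr\boldsymbol q\in L_0^2(\Omega)$. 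The connecting map $\tr\colon H_0(\div,\Omega;\mathbb M)\cap\ker(\div)\to L_0^2(\Omega)$ of \eqref{eq:hessBGG} is surjective: writing such an element as $\curl\boldsymbol\tau$ with $\boldsymbol\tau\in H_0^1(\Omega;\mathbb M)$ gives $\tr\curl\boldsymbol\tau=2\div\vskw\boldsymbol\tau$, while $\vskw$ maps $H_0^1(\Omega;\mathbb M)$ onto $H_0^1(\Omega;\mathbb R^3)$ and $\div$ maps $H_0^1(\Omega;\mathbb R^3)$ onto $L_0^2(\Omega)$. Hence there is $\boldsymbol s\in H_0(\div,\Omega;\mathbb M)\cap\ker(\div)$ with $\tr\boldsymbol s=\tr\boldsymbol q$, and $\boldsymbol p:=\boldsymbol q-\boldsymbol s\in H_0(\div,\Omega;\mathbb T)$ satisfies $\div\boldsymbol p=\boldsymbol f$.

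I expect this last step, surjectivity at $L^2(\Omega;\mathbb R^3)/\mathrm{RT}$, to be the only genuinely delicate point: subtracting $\tfrac13(\tr\boldsymbol q)\boldsymbol I$ from $\boldsymbol q$ would restore tracelessness but destroy $\div\boldsymbol q=\boldsymbol f$, so one is forced to correct instead by a divergence-free matrix field that still has the prescribed trace and vanishing normal trace, which is exactly what the surjectivity of the connecting map $\tr$ in \eqref{eq:hessBGG} supplies. A secondary point needing care is that one must invoke the $H^k$-regular versions of the de Rham complexes with homogeneous boundary conditions (so that the potentials $\boldsymbol v$, $u$ and $\boldsymbol\tau$ indeed land in $H_0^2$, $H_0^3$ and $H_0^1$, respectively), as provided by \cite{CostabelMcIntosh2010}.
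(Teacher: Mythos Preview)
Your proof is correct and follows the same logical skeleton as the paper's: exactness at $H_0^1(\Omega;\mathbb S)$ via two applications of the Costabel--McIntosh regular potential theorem, exactness at $H_0(\div,\Omega;\mathbb T)$ via the preceding lemma \eqref{eq:curlH1S}, and surjectivity of $\div$ at the end. The paper dispatches this last point by citing \cite[Theorem~3.12]{PaulyZulehner2020} and says nothing more.

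Your treatment of surjectivity is genuinely different and more self-contained: rather than appealing to an external result, you lift $\boldsymbol f\perp\mathrm{RT}$ to some $\boldsymbol q\in H_0(\div,\Omega;\mathbb M)$ via the matrix de~Rham complex, observe $\tr\boldsymbol q\in L_0^2(\Omega)$ from $\boldsymbol f\perp\boldsymbol x$, and then correct the trace using the surjectivity of the connecting map $\tr$ in the BGG diagram \eqref{eq:hessBGG}. This reuses exactly the machinery already introduced for the preceding lemma and avoids invoking a separate literature result; the price is that one must check (as you do) that $\tr\colon H_0(\div,\Omega;\mathbb M)\cap\ker(\div)\to L_0^2(\Omega)$ is onto, which follows cleanly from the factorization $\tr\circ\curl=2\div\circ\vskw$ together with the surjectivity of $\vskw$ and of $\div\colon H_0^1(\Omega;\mathbb R^3)\to L_0^2(\Omega)$. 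Your version also makes explicit why \eqref{hessiancomplex} is a complex in the first place, which the paper leaves implicit.
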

\begin{proof}
First, we prove
\[
H_0^1(\Omega;\mathbb S)\cap\ker(\curl)=\nabla^2 H_0^3(\Omega).
\]
The inclusion $\nabla^2 H_0^3(\Omega)\subset H_0^1(\Omega;\mathbb S)\cap\ker(\curl)$ is obvious. Conversely, let $\boldsymbol{\tau}\in H_0^1(\Omega;\mathbb S)$ satisfy $\curl\boldsymbol{\tau}=0$. Since $\Omega$ is contractible, the relevant cohomology spaces in Theorem 1.1 of \cite{CostabelMcIntosh2010} are trivial. Applying this theorem row by row to the de Rham complex with homogeneous boundary condition, there exists $\boldsymbol{v}\in H_0^2(\Omega;\mathbb R^3)$ such that $\boldsymbol{\tau}=\grad\boldsymbol{v}$. Since $\boldsymbol{\tau}$ takes values in $\mathbb S$, we have $\skw(\grad\boldsymbol{v})=0$. By the identity $\curl\boldsymbol{v}=2\vskw(\grad\boldsymbol{v})$, cf. Fig.1 in \cite{ChenHuang2025}, this implies $\curl\boldsymbol{v}=0$. Applying Theorem 1.1 in \cite{CostabelMcIntosh2010} again gives a function $u\in H_0^3(\Omega)$ such that $\boldsymbol{v}=\grad u$. Hence $\boldsymbol{\tau}=\nabla^2 u$. We also refer to Theorem 3.1 in \cite{ChenHuang2022} for a similar proof of the Hessian complex.
Moreover, $\div H_0(\div,\Omega;\mathbb T)=L^2(\Omega; \mathbb{R}^3) / \textnormal{RT}$ was established in \cite[Theorem 3.12]{PaulyZulehner2020}. Finally, by Proposition 13 in \cite{Gallistl2017}, one has
\begin{equation}\label{eq:curlH1S}
H_0(\div, \Omega; \mathbb{T}) \cap \ker(\div) =\curl H_0^1(\Omega; \mathbb{S}).
\end{equation}
Hence, the sequence \eqref{hessiancomplex} is exact.
\end{proof}

Recall the Korn-type inequality for the traceless gradient operator $\dev\grad$ \cite[Lemma 3.2]{PaulyZulehner2020}:
\begin{equation}\label{Korninequality}
\| \boldsymbol{v} \|_{1} \eqsim \|\dev\grad \boldsymbol{v} \|, \quad \forall\,\boldsymbol{v} \in H^1(\Omega; \mathbb{R}^3)/ \textrm{RT}.
\end{equation}

\begin{lemma}\label{lem:helm_decomposition}
The following Helmholtz decomposition holds:
\begin{equation}\label{continue_helm}
L^2(\Omega; \mathbb{T}) = \curl H_0^1(\Omega; \mathbb{S}) \oplus \dev\grad( H^1(\Omega; \mathbb{R}^3)/\textnormal{RT} ).
\end{equation}
That is, for any $ \boldsymbol{q} \in L^2(\Omega; \mathbb{T}) $, there exist $ \boldsymbol{\tau} \in H_0^1(\Omega; \mathbb{S}) $ and $ \boldsymbol{s} \in H^1(\Omega; \mathbb{R}^3)/ \textnormal{RT} $ such that
\begin{equation*}
\boldsymbol{q} = \curl\boldsymbol{\tau} + \dev\grad \boldsymbol{s}, \quad \|\boldsymbol{\tau}\|_1 + \|\boldsymbol{s}\|_1 \lesssim \|\boldsymbol{q}\|.
\end{equation*}
\end{lemma}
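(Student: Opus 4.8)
The plan is to verify the decomposition \eqref{continue_helm} in three stages: $L^2$-orthogonality of the two summands, surjectivity of their sum onto $L^2(\Omega;\mathbb T)$, and the stability bound. The essential structural input — the identification \eqref{eq:curlH1S} — has already been proved via the Bernstein-Gelfand-Gelfand diagram, and everything else is a short deduction from it plus the Korn inequality \eqref{Korninequality}.

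First I would check orthogonality. For $\boldsymbol\tau\in H_0^1(\Omega;\mathbb S)$ and $\boldsymbol s\in H^1(\Omega;\mathbb R^3)/\mathrm{RT}$, since $\curl\boldsymbol\tau$ is pointwise traceless one has $(\curl\boldsymbol\tau,\dev\grad\boldsymbol s)=(\curl\boldsymbol\tau,\grad\boldsymbol s)$. By \eqref{eq:curlH1S}, $\curl\boldsymbol\tau\in H_0(\div,\Omega;\mathbb T)$ with $\div\curl\boldsymbol\tau=0$, so a row-wise integration by parts gives $(\curl\boldsymbol\tau,\grad\boldsymbol s)=-(\div\curl\boldsymbol\tau,\boldsymbol s)=0$ (the normal-trace boundary term vanishing because $(\curl\boldsymbol\tau)\boldsymbol n=0$ on $\partial\Omega$). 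Hence the sum in \eqref{continue_helm} is an $L^2$-orthogonal, in particular direct, sum.

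Next, surjectivity and stability. Given $\boldsymbol q\in L^2(\Omega;\mathbb T)$, I would use \eqref{Korninequality} and the Lax-Milgram lemma to produce $\boldsymbol s\in H^1(\Omega;\mathbb R^3)/\mathrm{RT}$ with $(\dev\grad\boldsymbol s,\dev\grad\boldsymbol t)=(\boldsymbol q,\dev\grad\boldsymbol t)$ for all $\boldsymbol t$, and $\|\boldsymbol s\|_1\lesssim\|\boldsymbol q\|$. Set $\boldsymbol w:=\boldsymbol q-\dev\grad\boldsymbol s\in L^2(\Omega;\mathbb T)$, so $\|\boldsymbol w\|\lesssim\|\boldsymbol q\|$ and $(\boldsymbol w,\dev\grad\boldsymbol t)=0$ for every $\boldsymbol t\in H^1(\Omega;\mathbb R^3)$ (the extra $\mathrm{RT}$ directions contribute nothing, as $\dev\grad$ annihilates $\mathrm{RT}$). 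Since $\boldsymbol w$ is traceless this reads $(\boldsymbol w,\grad\boldsymbol t)=0$; testing against $\boldsymbol t\in C_0^\infty(\Omega;\mathbb R^3)$ gives $\div\boldsymbol w=0\in L^2(\Omega;\mathbb R^3)$, and then integration by parts against general $\boldsymbol t\in H^1(\Omega;\mathbb R^3)$ forces the normal trace $\boldsymbol w\boldsymbol n$ to vanish on $\partial\Omega$ (by density of $H^1$-traces in $H^{1/2}(\partial\Omega)$). Thus $\boldsymbol w\in H_0(\div,\Omega;\mathbb T)\cap\ker(\div)$, and \eqref{eq:curlH1S} yields $\boldsymbol\tau\in H_0^1(\Omega;\mathbb S)$ with $\curl\boldsymbol\tau=\boldsymbol w$, hence $\boldsymbol q=\curl\boldsymbol\tau+\dev\grad\boldsymbol s$. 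For the bound on $\boldsymbol\tau$: $\curl H_0^1(\Omega;\mathbb S)=H_0(\div,\Omega;\mathbb T)\cap\ker(\div)$ is closed in $L^2(\Omega;\mathbb T)$, so $\curl$ restricted to the orthogonal complement of its kernel $\nabla^2H_0^3(\Omega)$ in $H_0^1(\Omega;\mathbb S)$ is a bounded bijection onto this subspace; by the open mapping theorem I may choose $\boldsymbol\tau$ with $\|\boldsymbol\tau\|_1\lesssim\|\boldsymbol w\|\lesssim\|\boldsymbol q\|$.

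It remains to record uniqueness: if $\curl\boldsymbol\tau+\dev\grad\boldsymbol s=0$, pairing with each summand and using the orthogonality above gives $\curl\boldsymbol\tau=\dev\grad\boldsymbol s=0$, whence $\boldsymbol s=0$ in the quotient by \eqref{Korninequality}, while $\curl\boldsymbol\tau$ and $\dev\grad\boldsymbol s$ are themselves genuinely unique (the potential $\boldsymbol\tau$ being determined only modulo $\nabla^2H_0^3(\Omega)$, consistently with the exact complex \eqref{hessiancomplex}). The main obstacle is really just the $H^1$-control of the non-unique potential $\boldsymbol\tau$; one either runs the closed-range/open-mapping argument above or, equivalently, invokes the Poincar\'e-type inequality attached to the exact Hessian complex \eqref{hessiancomplex}. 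Everything else is routine once \eqref{eq:curlH1S} and \eqref{Korninequality} are in hand.
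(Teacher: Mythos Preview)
Your proposal is correct and follows essentially the same route as the paper: obtain $\boldsymbol{s}$ from the Korn inequality \eqref{Korninequality} via Lax--Milgram, show the remainder $\boldsymbol{q}-\dev\grad\boldsymbol{s}$ lies in $H_0(\div,\Omega;\mathbb T)\cap\ker(\div)$, and invoke \eqref{eq:curlH1S}. Your version is in fact more complete, since you spell out the $L^2$-orthogonality argument, the distributional verification that the remainder has zero divergence and zero normal trace, and the closed-range/open-mapping step yielding $\|\boldsymbol\tau\|_1\lesssim\|\boldsymbol q\|$, all of which the paper leaves implicit; you also correctly note that only $\curl\boldsymbol\tau$, not $\boldsymbol\tau$ itself, is uniquely determined.
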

\begin{proof}
It is clear that the sum is direct under the $L^2$ inner product, and
\begin{equation*}
\curl H_0^1(\Omega; \mathbb{S}) \oplus \dev\grad ( H^1(\Omega; \mathbb{R}^3)/\textrm{RT} )  \subseteq L^2(\Omega; \mathbb{T}).
\end{equation*}
Now we prove the reverse inclusion. Take any $ \boldsymbol{q} \in L^2(\Omega; \mathbb{T})$.
Consider the following variational problem: Find $ \boldsymbol{s} \in H^1(\Omega; \mathbb{R}^3)/\textrm{RT} $ such that
\begin{equation*}
( \dev\grad \boldsymbol{s}, \dev\grad \boldsymbol{t}  ) = ( \boldsymbol{q}, \dev\grad \boldsymbol{t}), \quad \forall\,\boldsymbol{t} \in H^1(\Omega; \mathbb{R}^3)/\textrm{RT}.
\end{equation*}
The well-posedness of this problem is guaranteed by the inequality \eqref{Korninequality} and the Lax-Milgram lemma \cite{Ciarlet1978}. We also have
\begin{equation*}
\|\boldsymbol{s}\|_1\lesssim \|\boldsymbol{q}\|.
\end{equation*}
Set $\boldsymbol{\eta}:=\boldsymbol{q}-\dev\grad\boldsymbol{s}$. Then
\[
(\boldsymbol{\eta},\dev\grad\boldsymbol{t})=0
\qquad
\forall\,\boldsymbol{t}\in H^1(\Omega;\mathbb R^3)/\textnormal{RT}.
\]
We use the characterization
\begin{equation*}
\bigl(\dev\grad(H^1(\Omega;\mathbb R^3)/\textnormal{RT})\bigr)^\perp
= H_0(\div,\Omega;\mathbb T)\cap\ker(\div),
\end{equation*}
where the orthogonal complement is taken in $L^2(\Omega;\mathbb T)$. Indeed, since $\boldsymbol{\eta}$ is trace-free,
$(\boldsymbol{\eta},\dev\grad\boldsymbol{t})=(\boldsymbol{\eta},\grad\boldsymbol{t})$.
The Green formula
\[
(\boldsymbol{\eta},\grad\boldsymbol{t})
=-(\div\boldsymbol{\eta},\boldsymbol{t})
+ (\boldsymbol{\eta}\boldsymbol{n},\boldsymbol{t})_{\partial\Omega}
\]
shows that the orthogonality is equivalent to $\div\boldsymbol{\eta}=0$ in $\Omega$ and $\boldsymbol{\eta}\boldsymbol{n}=0$ on $\partial\Omega$. Hence
$\boldsymbol{q}-\dev\grad\boldsymbol{s}\in H_0(\div,\Omega;\mathbb T)\cap\ker(\div)$.
Finally, the Helmholtz decomposition~\eqref{continue_helm} follows from \eqref{eq:curlH1S}.
\end{proof}

The Helmholtz decomposition \eqref{continue_helm} yields the following complex.
\begin{lemma}
For the contractible domain $\Omega$, the complex
\begin{equation}\label{curldevgradcomplex}
H_0^3(\Omega)\times \textnormal{RT}\xrightarrow{\begin{pmatrix}\nabla^2 & 0\\
0&\subset\end{pmatrix}}H_{0}^{1}(\Omega;\mathbb S)\times H^{1}(\Omega;\mathbb R^3) \xrightarrow{(\curl, \dev\grad)} L^2(\Omega; \mathbb{T}) \to 0
\end{equation}
is exact.
\end{lemma}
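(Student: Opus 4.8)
The plan is to assemble exactness of \eqref{curldevgradcomplex} from the exactness of the Hessian complex \eqref{hessiancomplex}, the Helmholtz decomposition \eqref{continue_helm} of Lemma~\ref{lem:helm_decomposition}, and the Korn-type inequality \eqref{Korninequality}. First one checks that \eqref{curldevgradcomplex} is a complex: for $u\in H_0^3(\Omega)$ the row-wise curl of the Hessian vanishes, $\curl\nabla^2 u=0$, while for $\boldsymbol{c}=a\boldsymbol{x}+\boldsymbol{b}\in\mathrm{RT}$ one has $\grad\boldsymbol{c}=a\boldsymbol{I}$ and hence $\dev\grad\boldsymbol{c}=0$; thus $(\curl,\dev\grad)$ annihilates the image of the first map.

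Surjectivity of $(\curl,\dev\grad)$ onto $L^2(\Omega;\mathbb{T})$ is immediate from \eqref{continue_helm}: writing $\boldsymbol{q}=\curl\boldsymbol{\tau}+\dev\grad\boldsymbol{s}$ with $\boldsymbol{\tau}\in H_0^1(\Omega;\mathbb{S})$ and $\boldsymbol{s}\in H^1(\Omega;\mathbb{R}^3)/\mathrm{RT}$, any $H^1(\Omega;\mathbb{R}^3)$-representative of $\boldsymbol{s}$ (which leaves $\dev\grad\boldsymbol{s}$ unchanged) provides a preimage.

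The main step is exactness at the middle space. Take $(\boldsymbol{\tau},\boldsymbol{s})\in H_0^1(\Omega;\mathbb{S})\times H^1(\Omega;\mathbb{R}^3)$ with $\curl\boldsymbol{\tau}+\dev\grad\boldsymbol{s}=0$. Since $\dev\grad$ kills $\mathrm{RT}$, one has $\dev\grad\boldsymbol{s}\in\dev\grad(H^1(\Omega;\mathbb{R}^3)/\mathrm{RT})$, and as the two summands in \eqref{continue_helm} intersect only in $\{0\}$, we get $\curl\boldsymbol{\tau}=0$ and $\dev\grad\boldsymbol{s}=0$ separately. The Korn-type inequality \eqref{Korninequality}, applied to the class of $\boldsymbol{s}$ in $H^1(\Omega;\mathbb{R}^3)/\mathrm{RT}$, then forces that class to be zero, i.e. $\boldsymbol{s}\in\mathrm{RT}$; and exactness of \eqref{hessiancomplex} at $H_0^1(\Omega;\mathbb{S})$ gives $u\in H_0^3(\Omega)$ with $\boldsymbol{\tau}=\nabla^2 u$. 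Hence $(\boldsymbol{\tau},\boldsymbol{s})$ lies in the image of the first map, as required. (Injectivity of the first map, if wanted, is also clear: $\nabla^2 u=0$ with $u\in H_0^3(\Omega)$ forces $u=0$ by exactness of \eqref{hessiancomplex} at $H_0^3(\Omega)$, and $\mathrm{RT}\hookrightarrow H^1(\Omega;\mathbb{R}^3)$ is injective.)

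No genuine obstacle is expected here; the statement is essentially a repackaging of \eqref{hessiancomplex} and \eqref{continue_helm} through the ``$\mathrm{RT}$-correction'' that makes $\dev\grad$ an operator with controlled kernel. The only point requiring a word of care is identifying $\ker(\dev\grad)$ on $H^1(\Omega;\mathbb{R}^3)$ with precisely $\mathrm{RT}$, which is exactly what \eqref{Korninequality} supplies, so no separate direct computation is needed.
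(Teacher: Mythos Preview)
Your proof is correct and follows essentially the same approach as the paper: surjectivity from the Helmholtz decomposition \eqref{continue_helm}, and exactness at the middle space from the Hessian complex \eqref{hessiancomplex} together with $\ker(\dev\grad)=\mathrm{RT}$. The paper's proof is a two-line sketch invoking exactly these ingredients, whereas you spell out the details (verifying it is a complex, using the direct sum to separate the two summands, and appealing to \eqref{Korninequality} to identify $\ker(\dev\grad)$), but the underlying argument is the same.
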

\begin{proof}
The Helmholtz decomposition \eqref{continue_helm} implies the operator $ (\curl, \dev\grad): H_{0}^{1}(\Omega;\mathbb S)\times H^{1}(\Omega;\mathbb R^3)\to L^2(\Omega; \mathbb{T})$ is surjective.
The exactness of complex \eqref{curldevgradcomplex} then follows from the Hessian complex \eqref{hessiancomplex} and $\ker(\dev\grad)=\textrm{RT}$.
\end{proof}

The surjectivity of the operator $(\curl,\dev\grad)$ in the complex \eqref{curldevgradcomplex} can also be derived by constructing a commutative diagram based on the first part of the following complex:
\begin{equation*}
\textrm{RT}\xrightarrow{\subset} H^1(\Omega;\mathbb R^3)\xrightarrow{\dev\grad} L^2(\Omega;\mathbb T)\xrightarrow{\sym\curl} H^{-1}(\Omega;\mathbb S)\xrightarrow{\div\div}H^{-3}(\Omega) \xrightarrow{}0.
\end{equation*}
We refer to \cite{ChenHuang2018} for further details on this approach.

Thanks to the complex \eqref{curldevgradcomplex}, the operator $(\curl, \dev\grad)$ can be interpreted as a generalized divergence operator, and problem \eqref{generalized_stokes} as a generalized tensor-valued Stokes equation involving tensor-valued unknowns.

\subsection{Weak formulation of generalized tensor-valued Stokes equation}
A weak formulation of the generalized tensor-valued Stokes equation \eqref{generalized_stokes} is to find $( \boldsymbol{\sigma}, \boldsymbol{p}, \boldsymbol{r}  )  \in H_0^1(\Omega; \mathbb{S}) \times L^2(\Omega; \mathbb{T}) \times (H^1(\Omega; \mathbb{R}^3) / \textrm{RT})  $ such that
\begin{subequations}\label{continue_generalized_stokes}
\begin{align}
a(\boldsymbol{\sigma} , \boldsymbol{r}; \boldsymbol{\tau} , \boldsymbol{s}) + b( \boldsymbol{\tau}, \boldsymbol{s}; \boldsymbol{p}) & = \left( \boldsymbol{g}, \boldsymbol{\tau} \right), && \forall\, \boldsymbol{\tau} \in H_0^1(\Omega; \mathbb{S}), \boldsymbol{s} \in H^1(\Omega; \mathbb{R}^3) / \textrm{RT}, \label{continue_generalized_stokes1} \\
b( \boldsymbol{\sigma}, \boldsymbol{r}; \boldsymbol{q}) & = 0, && \forall\,\boldsymbol{q} \in L^2(\Omega; \mathbb{T}), \label{continue_generalized_stokes2}
\end{align}
\end{subequations}
where the bilinear forms
\begin{equation*}
a(\boldsymbol{\sigma}, \boldsymbol{r}; \boldsymbol{\tau}, \boldsymbol{s}) = ( \nabla \boldsymbol{\sigma}, \nabla \boldsymbol{\tau}), \quad b(\boldsymbol{\tau}, \boldsymbol{s}; \boldsymbol{q}) = ( \curl \boldsymbol{\tau} + \dev\grad \boldsymbol{s}, \boldsymbol{q}).
\end{equation*}

Using the Helmholtz decomposition \eqref{continue_helm}, we now establish the well-posedness of the weak formulation \eqref{continue_generalized_stokes}.

\begin{lemma}
For $\left(\boldsymbol{\tau}, \boldsymbol{s} \right) \in H_0^1(\Omega; \mathbb{S}) \times (H^1(\Omega; \mathbb{R}^3) / \textnormal{RT})$ satisfying
\begin{equation}\label{condition}
b(\boldsymbol{\tau}, \boldsymbol{s}; \boldsymbol{q}) = 0, \quad\forall\,\boldsymbol{q} \in L^2(\Omega; \mathbb{T}),
\end{equation}
we have
\begin{equation}\label{continue_coercive}
\| \boldsymbol{\tau} \|_1^2 + \| \boldsymbol{s} \|_1^2 \lesssim a(\boldsymbol{\tau}, \boldsymbol{s}; \boldsymbol{\tau}, \boldsymbol{s}).
\end{equation}
\end{lemma}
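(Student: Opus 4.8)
The plan is to show that the constraint \eqref{condition} forces both $\curl\boldsymbol{\tau}$ and $\dev\grad\boldsymbol{s}$ to vanish, after which \eqref{continue_coercive} reduces to two elementary inequalities already recorded above. First I would test \eqref{condition} with the admissible choice $\boldsymbol{q} := \curl\boldsymbol{\tau} + \dev\grad\boldsymbol{s}$, which indeed lies in $L^2(\Omega;\mathbb{T})$ since $\dev\grad\boldsymbol{s}$ is traceless by construction and $\curl$ maps symmetric tensors into traceless ones (this is exactly why $\curl$ has target $H_0(\div,\Omega;\mathbb T)$ in the Hessian complex \eqref{hessiancomplex}). This yields $\|\curl\boldsymbol{\tau} + \dev\grad\boldsymbol{s}\|^2 = 0$, i.e. $\curl\boldsymbol{\tau} + \dev\grad\boldsymbol{s} = 0$ almost everywhere in $\Omega$.

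Next I would invoke the directness of the Helmholtz decomposition \eqref{continue_helm} from Lemma~\ref{lem:helm_decomposition}: since $\curl\boldsymbol{\tau} \in \curl H_0^1(\Omega;\mathbb S)$ and $\dev\grad\boldsymbol{s} \in \dev\grad(H^1(\Omega;\mathbb R^3)/\textnormal{RT})$, and these two subspaces intersect only at $0$ (indeed, they are $L^2$-orthogonal), the identity $\curl\boldsymbol{\tau} = -\dev\grad\boldsymbol{s}$ forces $\curl\boldsymbol{\tau} = 0$ and $\dev\grad\boldsymbol{s} = 0$ separately; equivalently, expand $0 = \|\curl\boldsymbol{\tau} + \dev\grad\boldsymbol{s}\|^2 = \|\curl\boldsymbol{\tau}\|^2 + \|\dev\grad\boldsymbol{s}\|^2$ using orthogonality of the decomposition.

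Finally I would assemble the bound. From $\dev\grad\boldsymbol{s} = 0$ and the Korn-type inequality \eqref{Korninequality} we obtain $\|\boldsymbol{s}\|_1 \lesssim \|\dev\grad\boldsymbol{s}\| = 0$, so the $\|\boldsymbol{s}\|_1$-contribution vanishes; and the Poincar\'e--Friedrichs inequality on $H_0^1(\Omega;\mathbb S)$ gives $\|\boldsymbol{\tau}\| \lesssim \|\nabla\boldsymbol{\tau}\|$, hence $\|\boldsymbol{\tau}\|_1^2 \lesssim \|\nabla\boldsymbol{\tau}\|^2 = a(\boldsymbol{\tau},\boldsymbol{s};\boldsymbol{\tau},\boldsymbol{s})$. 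Adding these gives \eqref{continue_coercive}. The argument carries essentially no obstacle; the only points deserving care are checking that $\boldsymbol{q} = \curl\boldsymbol{\tau} + \dev\grad\boldsymbol{s}$ is genuinely traceless (so that it is a legitimate test function in \eqref{condition}) and that the vanishing sum is split via the directness claimed in Lemma~\ref{lem:helm_decomposition}; everything else is the two inequalities already at hand.
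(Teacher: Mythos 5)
Your proposal is correct and follows essentially the same route as the paper: derive $\curl\boldsymbol{\tau}+\dev\grad\boldsymbol{s}=0$ from \eqref{condition}, split via the directness of the Helmholtz decomposition \eqref{continue_helm} to get $\curl\boldsymbol{\tau}=0$ and $\dev\grad\boldsymbol{s}=0$, conclude $\boldsymbol{s}=0$ from the Korn-type inequality \eqref{Korninequality}, and finish with the Poincar\'e inequality. Your extra checks (that $\curl$ of a symmetric field is traceless so the test function $\boldsymbol{q}=\curl\boldsymbol{\tau}+\dev\grad\boldsymbol{s}$ is admissible, and the $L^2$-orthogonality splitting) are sound elaborations of steps the paper leaves implicit.
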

\begin{proof}
The condition \eqref{condition} implies
$$
\curl \boldsymbol{\tau} + \dev\grad \boldsymbol{s} = 0.
$$
Combined with the Helmholtz decomposition \eqref{continue_helm}, this yields $\curl \boldsymbol{\tau} = 0$ and $\dev\grad \boldsymbol{s} = 0$, and hence $\boldsymbol{s} = 0$.
Finally, \eqref{continue_coercive} follows from the Poincar\'e inequality.
\end{proof}

\begin{lemma}
The following inf-sup condition holds:
\begin{equation}
\| \boldsymbol{q} \| \lesssim \sup\limits_{\boldsymbol{\tau} \in H_0^1(\Omega; \mathbb{S}), \boldsymbol{s} \in H^1(\Omega; \mathbb{R}^3)/\textnormal{RT} }  \frac{b(\boldsymbol{\tau}, \boldsymbol{s}; \boldsymbol{q})}{ \| \boldsymbol{\tau} \|_1 + \| \boldsymbol{s} \|_1}, \qquad\forall\,\boldsymbol{q} \in L^2(\Omega; \mathbb{T}). \label{continue_infsup}
\end{equation}
\end{lemma}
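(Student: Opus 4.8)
The plan is to read the inf-sup condition directly off the Helmholtz decomposition of Lemma~\ref{lem:helm_decomposition}, which is exactly the surjectivity (with bounded right inverse) of the operator $(\curl,\dev\grad)$ underlying the bilinear form $b$. Given an arbitrary $\boldsymbol{q}\in L^2(\Omega;\mathbb{T})$, I would invoke \eqref{continue_helm} to produce $\boldsymbol{\tau}\in H_0^1(\Omega;\mathbb{S})$ and $\boldsymbol{s}\in H^1(\Omega;\mathbb{R}^3)/\mathrm{RT}$ with
\[
\boldsymbol{q}=\curl\boldsymbol{\tau}+\dev\grad\boldsymbol{s},\qquad \|\boldsymbol{\tau}\|_1+\|\boldsymbol{s}\|_1\lesssim\|\boldsymbol{q}\|.
\]

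With this particular test pair, the numerator in the supremum becomes $b(\boldsymbol{\tau},\boldsymbol{s};\boldsymbol{q})=(\curl\boldsymbol{\tau}+\dev\grad\boldsymbol{s},\boldsymbol{q})=(\boldsymbol{q},\boldsymbol{q})=\|\boldsymbol{q}\|^2$, so that
\[
\sup_{\boldsymbol{\tau}\in H_0^1(\Omega;\mathbb{S}),\,\boldsymbol{s}\in H^1(\Omega;\mathbb{R}^3)/\mathrm{RT}}\frac{b(\boldsymbol{\tau},\boldsymbol{s};\boldsymbol{q})}{\|\boldsymbol{\tau}\|_1+\|\boldsymbol{s}\|_1}\;\geq\;\frac{\|\boldsymbol{q}\|^2}{\|\boldsymbol{\tau}\|_1+\|\boldsymbol{s}\|_1}\;\gtrsim\;\frac{\|\boldsymbol{q}\|^2}{\|\boldsymbol{q}\|}\;=\;\|\boldsymbol{q}\|,
\]
which is precisely \eqref{continue_infsup}. (If $\boldsymbol{q}=0$ the bound is trivial, so one may assume $\|\boldsymbol{q}\|>0$ when dividing.)

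There is essentially no obstacle here: all the analytic work—constructing the potentials $\boldsymbol{\tau}$ and $\boldsymbol{s}$ with the stability estimate—has already been carried out in the proof of Lemma~\ref{lem:helm_decomposition}, which in turn rests on the Korn-type inequality \eqref{Korninequality} and the identity \eqref{eq:curlH1S}. The only thing to be careful about is that the stability constant in \eqref{continue_helm} is independent of $\boldsymbol{q}$, which it is. Together with the coercivity estimate \eqref{continue_coercive} on the kernel of $b$, this inf-sup condition gives the well-posedness of \eqref{continue_generalized_stokes} by the standard Babu\v{s}ka--Brezzi theory.
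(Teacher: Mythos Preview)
Your proof is correct and follows exactly the same approach as the paper: invoke the Helmholtz decomposition \eqref{continue_helm} to obtain $\boldsymbol{\tau}$ and $\boldsymbol{s}$ with $\boldsymbol{q}=\curl\boldsymbol{\tau}+\dev\grad\boldsymbol{s}$ and $\|\boldsymbol{\tau}\|_1+\|\boldsymbol{s}\|_1\lesssim\|\boldsymbol{q}\|$, then note $b(\boldsymbol{\tau},\boldsymbol{s};\boldsymbol{q})=\|\boldsymbol{q}\|^2$ to conclude.
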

\begin{proof}
By the Helmholtz decomposition \eqref{continue_helm}, there exist $\boldsymbol{\tau} \in H_0^1(\Omega; \mathbb S)$ and $ \boldsymbol{s} \in H^1(\Omega; \mathbb{R}^3)/\textrm{RT}$ such that
\begin{equation*}
\boldsymbol{q} = \curl \boldsymbol{\tau} + \dev\grad \boldsymbol{s},  \quad \| \boldsymbol{\tau} \|_1 + \| \boldsymbol{s} \|_1 \lesssim \| \boldsymbol{q} \|.
\end{equation*}
Then $b(\boldsymbol{\tau}, \boldsymbol{s}; \boldsymbol{q}) = \| \boldsymbol{q} \|^2$, and the inequality \eqref{continue_infsup} immediately holds.
\end{proof}

\begin{theorem}\label{gStokes:continue_wellposed}
The weak formulation \eqref{continue_generalized_stokes} is well-posed, and it is equivalent to the generalized tensor-valued Stokes equation \eqref{generalized_stokes} in the distributional sense. Moreover,  $\boldsymbol{r} = 0$, $\curl\boldsymbol{\sigma} = 0$, $\div\boldsymbol{p} = 0$ and $\boldsymbol{p} \in H_0(\div,\Omega; \mathbb{T})$.
\end{theorem}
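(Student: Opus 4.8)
The plan is to obtain well-posedness from the Babuška–Brezzi theory for saddle-point problems, and then to unwind the variational equations to recover the strong form and the stated structural properties. First, $a(\cdot,\cdot;\cdot,\cdot)$ is bounded on $\big(H_0^1(\Omega;\mathbb S)\times(H^1(\Omega;\mathbb R^3)/\textrm{RT})\big)^2$, and $b(\cdot,\cdot;\cdot)$ is bounded on that space paired with $L^2(\Omega;\mathbb T)$, since $\|\curl\boldsymbol\tau\|+\|\dev\grad\boldsymbol s\|\lesssim\|\boldsymbol\tau\|_1+\|\boldsymbol s\|_1$ by Cauchy--Schwarz. The coercivity of $a$ on the kernel $\{(\boldsymbol\tau,\boldsymbol s): b(\boldsymbol\tau,\boldsymbol s;\boldsymbol q)=0\ \forall\,\boldsymbol q\}$ is precisely \eqref{continue_coercive}, and the inf-sup condition for $b$ is \eqref{continue_infsup}. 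Hence the standard mixed-problem theory (e.g.\ \cite{Ciarlet1978}) yields a unique solution $(\boldsymbol\sigma,\boldsymbol p,\boldsymbol r)$ of \eqref{continue_generalized_stokes} with $\|\boldsymbol\sigma\|_1+\|\boldsymbol p\|+\|\boldsymbol r\|_1\lesssim\|\boldsymbol g\|$.

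Next I would read off the structural identities. Choosing $\boldsymbol q=\curl\boldsymbol\sigma+\dev\grad\boldsymbol r\in L^2(\Omega;\mathbb T)$ in \eqref{continue_generalized_stokes2} gives $\curl\boldsymbol\sigma+\dev\grad\boldsymbol r=0$. Since the Helmholtz decomposition \eqref{continue_helm} is a direct sum, the two summands vanish separately, so $\curl\boldsymbol\sigma=0$ and $\dev\grad\boldsymbol r=0$; as $\ker(\dev\grad)\cap\big(H^1(\Omega;\mathbb R^3)/\textrm{RT}\big)=\{0\}$, we conclude $\boldsymbol r=0$. This already establishes the second line of \eqref{generalized_stokes}.

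Then, setting $\boldsymbol s=0$ in \eqref{continue_generalized_stokes1} and letting $\boldsymbol\tau$ run over $C_0^\infty(\Omega;\mathbb S)$, I would use the integration-by-parts identity $(\curl\boldsymbol\tau,\boldsymbol p)=(\boldsymbol\tau,\sym\curl\boldsymbol p)$ — valid for symmetric test tensors because $\sym\curl$ is the formal adjoint of the row-wise $\curl$ on $\mathbb S$-valued fields — to conclude that $-\Delta\boldsymbol\sigma+\sym\curl\boldsymbol p=\boldsymbol g$ holds in the distributional sense, i.e.\ the first line of \eqref{generalized_stokes}. Setting instead $\boldsymbol\tau=0$ and using $\tr\boldsymbol p=0$ gives $(\grad\boldsymbol s,\boldsymbol p)=(\dev\grad\boldsymbol s,\boldsymbol p)=0$ for every $\boldsymbol s\in H^1(\Omega;\mathbb R^3)$ (the $\textrm{RT}$-component is immaterial, as $\dev\grad$ annihilates it). Testing with $\boldsymbol s\in C_0^\infty(\Omega;\mathbb R^3)$ yields $\div\boldsymbol p=0$ in $\Omega$, so $\boldsymbol p\in H(\div,\Omega;\mathbb T)\cap\ker(\div)$; a boundary integration by parts with general $\boldsymbol s\in H^1(\Omega;\mathbb R^3)$ then leaves $\langle\boldsymbol p\boldsymbol n,\boldsymbol s\rangle_{\partial\Omega}=0$, whence $\boldsymbol p\boldsymbol n=0$ on $\partial\Omega$ by surjectivity of the trace, i.e.\ $\boldsymbol p\in H_0(\div,\Omega;\mathbb T)$. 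Conversely, any solution of \eqref{generalized_stokes} satisfies \eqref{continue_generalized_stokes} after integrating by parts, using $\boldsymbol\tau\boldsymbol n=0$ for $\boldsymbol\tau\in H_0^1(\Omega;\mathbb S)$ and $\boldsymbol p\boldsymbol n=0$; this gives the asserted equivalence.

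The main obstacle I anticipate is the treatment of $\boldsymbol p$, which a priori lies only in $L^2(\Omega;\mathbb T)$: one must pin down the adjoint pairing against $\sym\curl$ with the correct transpose and sign convention inherited from the row-wise curl, and, once $\boldsymbol p$ has been upgraded to $H(\div,\Omega;\mathbb T)$, justify the Green's formula on $\partial\Omega$ that produces the normal-trace condition $\boldsymbol p\boldsymbol n=0$. The remaining ingredients — boundedness of the two forms and the interior integration-by-parts computations — are routine.
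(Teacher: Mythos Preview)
Your proposal is correct and follows essentially the same route as the paper: well-posedness via the coercivity \eqref{continue_coercive} and the inf-sup condition \eqref{continue_infsup} together with the Babu\v{s}ka--Brezzi theory, then use \eqref{continue_generalized_stokes2} and the direct-sum Helmholtz decomposition \eqref{continue_helm} to obtain $\boldsymbol r=0$ and $\curl\boldsymbol\sigma=0$, test \eqref{continue_generalized_stokes1} with $\boldsymbol\tau=0$ to recover $\div\boldsymbol p=0$ and $\boldsymbol p\in H_0(\div,\Omega;\mathbb T)$, and integrate by parts with $\boldsymbol s=0$ to obtain the first line of \eqref{generalized_stokes}. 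Your additional remarks on boundedness, the distributional interpretation of $\sym\curl\boldsymbol p$, and the trace argument for $\boldsymbol p\boldsymbol n=0$ are more explicit than the paper's presentation but do not change the argument.
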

\begin{proof}
Thanks to the coercivity \eqref{continue_coercive} and the inf-sup condition \eqref{continue_infsup}, the Babu\v{s}ka-Brezzi theory \cite{BoffiBrezziFortin2013} implies the well-posedness of the weak formulation \eqref{continue_generalized_stokes}.

Combining equation \eqref{continue_generalized_stokes2} with the Helmholtz decomposition \eqref{continue_helm} yields $\boldsymbol{r} = 0$ and $\curl \boldsymbol{\sigma} = 0$. Choosing $\boldsymbol{\tau} = 0$ in \eqref{continue_generalized_stokes1} gives $\boldsymbol{p} \in H_0(\div,\Omega; \mathbb{T})$ with $\div \boldsymbol{p} = 0$. Finally, taking $\boldsymbol{s}=0$ in \eqref{continue_generalized_stokes1} and using the definition given after \eqref{generalized_stokes}, we obtain $-\Delta\boldsymbol{\sigma}+\sym\curl\boldsymbol{p}=\boldsymbol{g}$ in $H^{-1}(\Omega;\mathbb S)$. Hence, \eqref{continue_generalized_stokes} is equivalent to \eqref{generalized_stokes} in the distributional sense.
\end{proof}

\begin{remark}\rm
By applying integration by parts to $(\dev\grad \boldsymbol{s}, \boldsymbol{p})$ and $(\dev\grad \boldsymbol{r}, \boldsymbol{q})$ in \eqref{continue_generalized_stokes},
we have another weak formulation of the generalized tensor-valued Stokes equation \eqref{generalized_stokes} : Find $( \boldsymbol{\sigma}, \boldsymbol{p}, \boldsymbol{r}  )  \in H_0^1(\Omega; \mathbb{S}) \times H_0(\div,\Omega; \mathbb{T}) \times (L^2(\Omega; \mathbb{R}^3) / \textrm{RT})  $ such that
\begin{align*}
a(\boldsymbol{\sigma} , \boldsymbol{r}; \boldsymbol{\tau} , \boldsymbol{s}) + \tilde{b}( \boldsymbol{\tau}, \boldsymbol{s}; \boldsymbol{p}) & = \left( \boldsymbol{g}, \boldsymbol{\tau} \right), && \forall\, \boldsymbol{\tau} \in H_0^1(\Omega; \mathbb{S}), \boldsymbol{s} \in L^2(\Omega; \mathbb{R}^3) / \textrm{RT}, \\
\tilde{b}( \boldsymbol{\sigma}, \boldsymbol{r}; \boldsymbol{q}) & = 0, && \forall\,\boldsymbol{q} \in H_0(\div,\Omega; \mathbb{T}), 
\end{align*}
where the bilinear form
\begin{equation*}
\tilde{b}(\boldsymbol{\tau}, \boldsymbol{s}; \boldsymbol{q}) = (\curl \boldsymbol{\tau}, \boldsymbol{q}) - (\boldsymbol{s}, \div\boldsymbol{q}).
\end{equation*}
A similar formulation with two unknowns was studied in \cite[(5.4)]{Gallistl2017}.
The well-posedness of this weak formulation is related to the Hessian complex \eqref{hessiancomplex}.
\end{remark}

\section{Nonconforming Linear Element Method for Generalized Tensor-valued Stokes Equation}\label{section3}
In this section, we develop and analyze a nonconforming linear element method for the generalized tensor-valued Stokes equation \eqref{continue_generalized_stokes}.

\subsection{Finite element spaces and interpolation operators}
We will use the nonconforming linear element to discretize $\boldsymbol{\sigma} \in H_0^1(\Omega;\mathbb{S})$ and $ \boldsymbol{r} \in H^1(\Omega; \mathbb{R}^3)$, and piecewise constants to discretize $\boldsymbol{p} \in L^2(\Omega; \mathbb{T})$.

Recall the nonconforming linear element space \cite{CrouzeixRaviart1973}
\begin{equation*}
V_h^{\text{CR}} := \{ v_h \in \mathbb{P}_1(\mathcal{T}_h) : Q_{0,F} v_h \text{ is single-valued for all } F \in \Delta_2(\mathring{\mathcal{T}}_h)  \},
\end{equation*}
where $Q_{0,F}$ denotes the $L^2$-orthogonal projection operator onto $\mathbb{P}_0(F)$. The degree of freedom (DoF) is
\begin{equation}\label{CR_dof}
\int_{F} v \,\text{d}S, \quad \forall\,F \in \Delta_{2}(\mathcal{T}_h).
\end{equation}
Set $ V_h = V_h^{\text{CR}} \otimes \mathbb{R}^3$ and $\mathring{V}_h^{\mathbb{S}} = \mathring{V}_h^{\text{CR}} \otimes \mathbb{S} $, where
\begin{equation*}
\mathring{V}_h^{\text{CR}} = \{  v \in  V_h^{\text{CR}}: Q_{0,F} v = 0 \text{ for all } F \in \Delta_2(\mathcal{T}_h)\backslash \Delta_2(\mathring{\mathcal{T}}_h)  \}.
\end{equation*}
The spaces $ V_h  $ and $ \mathring{V}_h^{\mathbb{S}} $ satisfy the weak continuity conditions
\begin{align}
\label{eq:weakcontinuityCRvector}
\int_F [\boldsymbol{v}]\,\text{d}S &= 0, \quad \forall\,\boldsymbol{v} \in V_h, \, F \in \Delta_2(\mathring{\mathcal{T}}_h) , \\
\label{eq:weakcontinuityCRtensor}
\int_F [\boldsymbol{\tau}]\,\text{d}S &= 0, \quad \forall\,\boldsymbol{\tau} \in \mathring{V}_h^{\mathbb{S}}, \, F \in \Delta_2(\mathcal{T}_h).
\end{align}
The nonconforming linear element space $\mathring{V}_h^{\text{CR}}$ has the discrete Poincar\'e inequality
\begin{equation}\label{eq:PoincareCR}
\|v\|\lesssim |v|_{1,h}, \quad\forall\,v\in \mathring{V}_h^{\text{CR}}.
\end{equation}

Let $I_h: H^1(\Omega) \to  V_h^{\text{CR}}$ be the nodal interpolation operator based on the DoF~\eqref{CR_dof}. Its vector- and tensor-valued extensions are also denoted by $I_h$. Then we have for $T \in \mathcal{T}_h$ that 
\begin{align}
\int_T\nabla ( v - I_h v )\dx &= 0, \quad \forall\,v \in H^1(\Omega),  \label{interpolation_weak_continuty_cr}\\
\| v - I_h v \|_{0, T} + h_T | v - I_h v |_{1, T} &\lesssim h_T^s | v |_{s, T}, \quad \forall\,v \in H^s(\Omega), \, 1 \leq s \leq 2. \label{interpolation_error_cr}
\end{align}
Then combining trace inequality and \eqref{interpolation_error_cr} gives
\begin{equation}\label{interpolation_error_cr_jump}
\sum_{F \in \Delta_{2}(\mathcal{T}_h)} h_F^{-1}\| [  v - I_h v] \|_{F}^2 \lesssim
h^{2(s-1)} | v |_{s, h}^2, \quad \forall\,v \in H^s(\Omega), \, 1 \leq s \leq 2.
\end{equation}
Similar results hold for $I_h$ applied to vector- and tensor-valued functions.
Let $Q_h: L^2(\Omega; \mathbb{T}) \to \mathbb{P}_0(\mathcal{T}_h; \mathbb{T})$ be the $L^2$-orthogonal projector.
We have
\begin{equation*}
\| \boldsymbol{q} - Q_h \boldsymbol{q} \| \lesssim h | \boldsymbol{q} |_1,\quad \forall\,\boldsymbol{q} \in H^1(\Omega; \mathbb{T}).
\end{equation*}

\subsection{Broken Korn-type inequality}\label{sec:discrete_korn}

Using the technique in \cite{Brenner2004}, we can obtain the following broken Korn-type inequality for piecewise smooth vector-valued functions:
\begin{equation}\label{discrete_korn_rt}
\interleave \boldsymbol{v}\interleave_{1,h}^2 \eqsim
\| \dev\grad_h \boldsymbol{v} \|^2 + \sum_{F \in \Delta_{2}(\mathcal{T}_h)} h_F^{-1}\| [\boldsymbol{v}] \|_{F}^2,
\qquad \forall\,\boldsymbol{v}\in H^1(\mathcal{T}_h;\mathbb R^3).
\end{equation}

Next, we explain by means of two examples that $\| \dev\grad_h \boldsymbol{v} \|$ is not a norm on the space $V_h / \mathrm{RT}$ for general meshes. Equivalently, the nullity of the stiffness matrix associated with the bilinear form
\[
(\dev\grad_h \boldsymbol{r}, \dev\grad_h \boldsymbol{s}), \qquad \boldsymbol{r},\boldsymbol{s}\in V_h,
\]
can be larger than $\dim \mathrm{RT}=4$.

The mesh in Fig.~\ref{fig:mesh1} consists of two tetrahedra, and the corresponding stiffness matrix has nullity $5$. For the mesh in Fig.~\ref{fig:mesh3}, the nullity is $6$, and this number remains unchanged under uniform refinement. Both examples show that $\| \dev\grad_h \boldsymbol{v} \|$ is not necessarily a norm on $V_h / \mathrm{RT}$, thereby illustrating the necessity of the jump term in \eqref{discrete_korn_rt}.
\begin{figure}[htbp]
\centering
\vspace{-50pt}
\begin{subfigure}[b]{0.4\textwidth}
\centering
\adjustbox{trim=0.25\width 0.25\height 0.25\width 0.25\height, clip=true}
{\includegraphics[width=1.8\textwidth]{./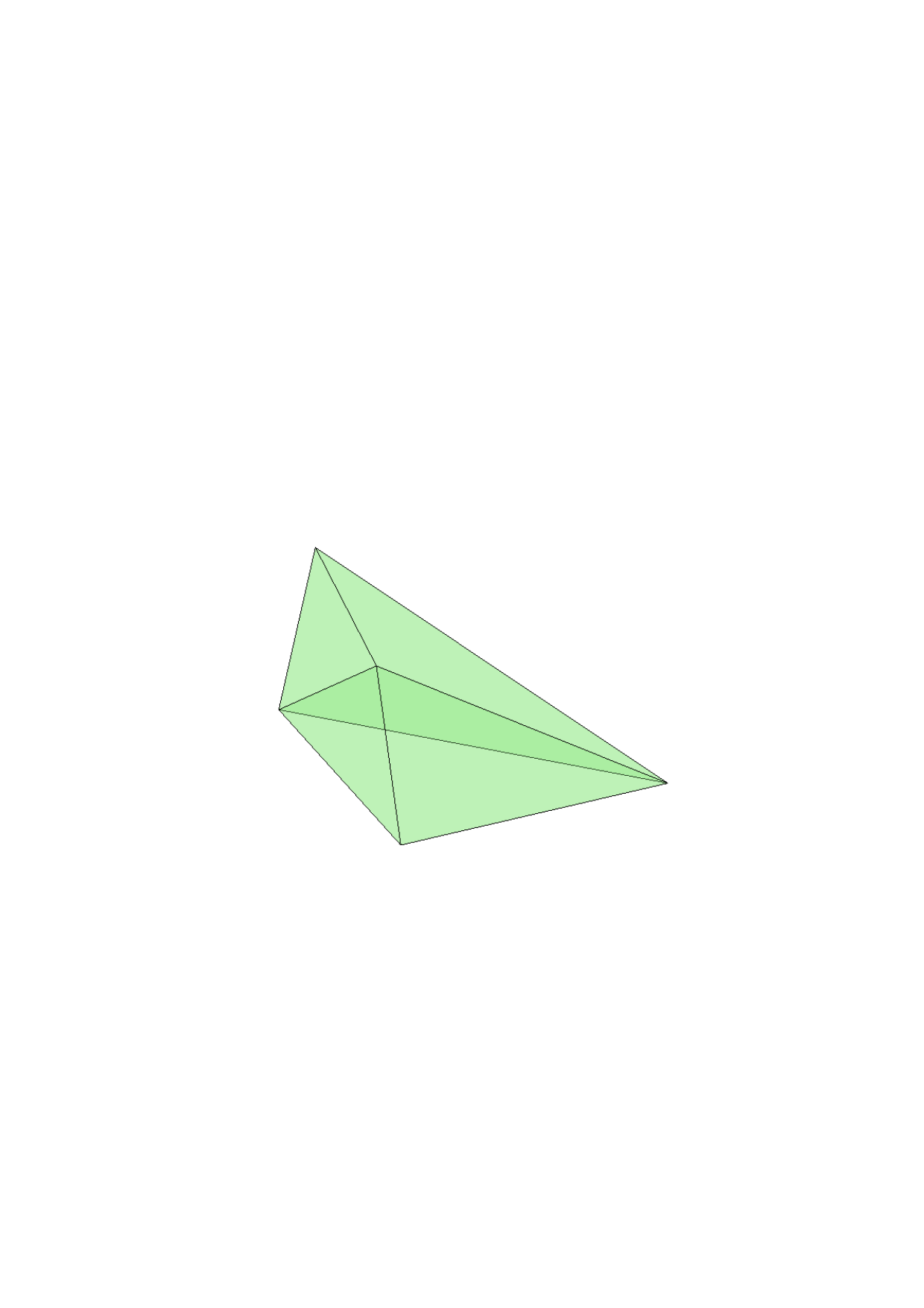}}
\caption{A two-tetrahedron mesh.}
\label{fig:mesh1}
\end{subfigure}
\vspace{0pt}
\begin{subfigure}[b]{0.4\textwidth}
\centering
\adjustbox{trim=0.25\width 0.25\height 0.25\width 0.25\height, clip=true}
{\includegraphics[width=1.8\textwidth]{./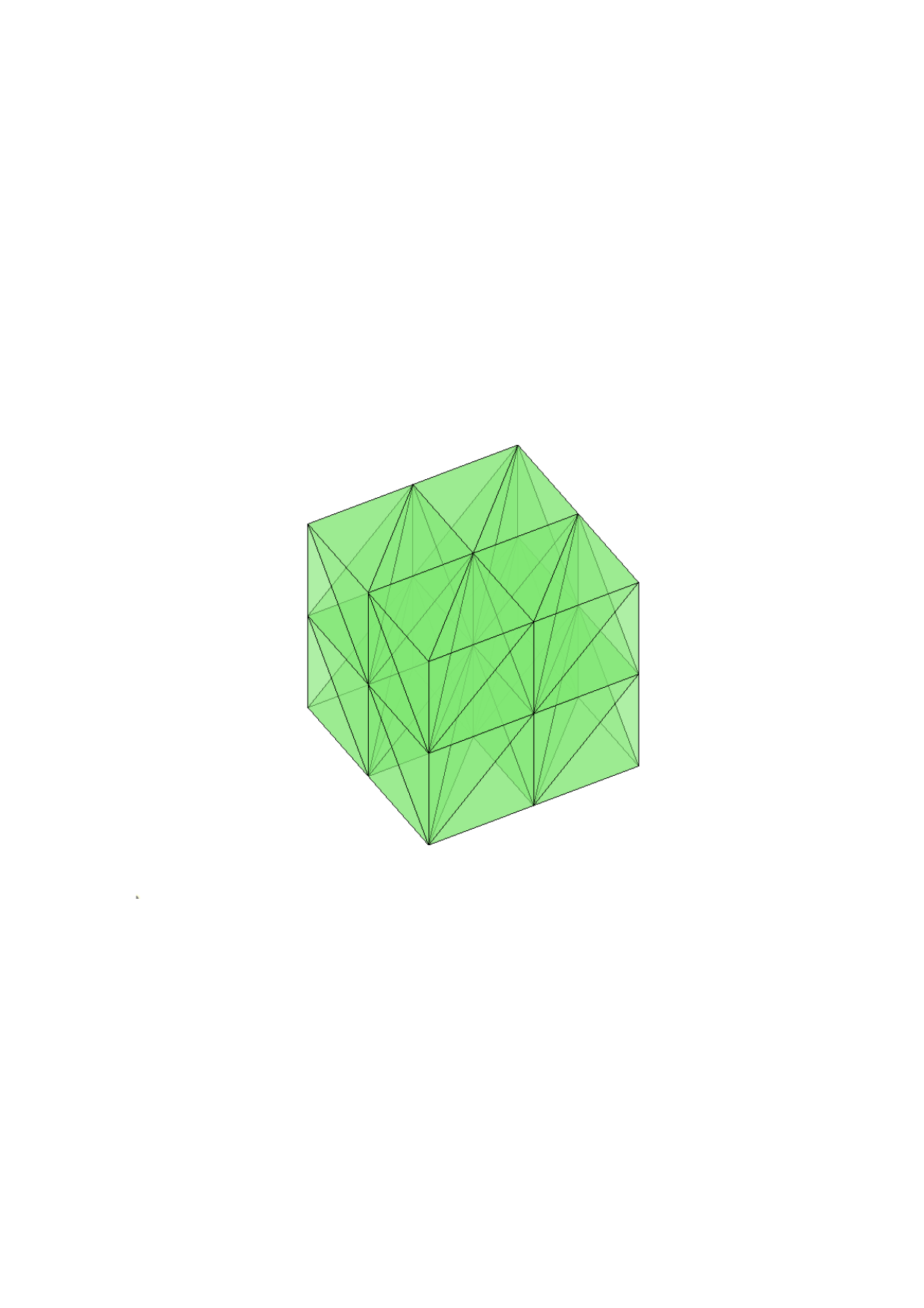}}
\caption{A uniform mesh on a cube.}
\label{fig:mesh3}
\end{subfigure}
\vspace{0pt}
\caption{Illustration of $\ker(\dev\grad_h)\cap V_h$ on different meshes. Its dimension is $5$ for the mesh in (A) and $6$ for the mesh in (B).}
\label{fig:meshes}
\end{figure}
\vspace{-10pt}

\subsection{Nonconforming linear element method}
Since the linear element space $V_h$ is $H^1$-nonconforming, the Korn-type inequality \eqref{Korninequality} does not hold for functions in $V_h$. Inspired by the broken Korn-type inequality \eqref{discrete_korn_rt}, we propose the following nonconforming linear element method for the generalized tensor-valued Stokes equation \eqref{continue_generalized_stokes}: Find
$(\boldsymbol{\sigma}_h, \boldsymbol{p}_h, \boldsymbol{r}_h) \in  \mathring{V}_h^{\mathbb{S}} \times \mathbb{P}_0(\mathcal{T}_h; \mathbb{T}) \times  V_h $ such that
\begin{subequations}\label{discrete_generalized_stokes}
\begin{align}
a_h(\boldsymbol{\sigma}_h, \boldsymbol{r}_h; \boldsymbol{\tau}, \boldsymbol{s}) + b_h(\boldsymbol{\tau}, \boldsymbol{s}; \boldsymbol{p}_h ) & = ( \boldsymbol{g}_h, \boldsymbol{\tau} ), && \forall\, \boldsymbol{\tau} \in \mathring{V}_h^{\mathbb{S}}, \boldsymbol{s} \in V_h , \label{discrete_generalized_stokes_1}        \\
b_h(\boldsymbol{\sigma}_h, \boldsymbol{r}_h; \boldsymbol{q} ) & = 0, && \forall\, \boldsymbol{q} \in \mathbb{P}_0(\mathcal{T}_h; \mathbb{T}), \label{discrete_generalized_stokes_2}
\end{align}
\end{subequations}
where $\boldsymbol{g}_h\in L^2(\Omega; \mathbb{S})$ is an approximation of $\boldsymbol{g}$, and the discrete bilinear forms
\begin{align*}
a_h(\boldsymbol{\sigma}, \boldsymbol{r}; \boldsymbol{\tau}, \boldsymbol{s}) & := \left( \nabla_h \boldsymbol{\sigma}, \nabla_h \boldsymbol{\tau} \right) + \sum_{F \in \Delta_{2}(\mathcal{T}_h) } h_F^{-1} \left( [\boldsymbol{r}], [\boldsymbol{s}] \right)_{F},  \\
b_h(\boldsymbol{\tau}, \boldsymbol{s}; \boldsymbol{p} ) & := \left( \curl_h \boldsymbol{\tau},  \boldsymbol{p} \right) + \left( \dev\grad _h \boldsymbol{s}, \boldsymbol{p} \right).
\end{align*}
The penalty term $\sum_{F \in \Delta_{2}(\mathcal{T}_h) } h_F^{-1} \left( [\boldsymbol{r}], [\boldsymbol{s}] \right)_{F}$ is incorporated into the bilinear form $a_h(\cdot, \cdot; \cdot, \cdot)$ to guarantee its discrete coercivity and the uniqueness of $\boldsymbol{r}_h$, as ensured by the broken Korn-type inequality \eqref{discrete_korn_rt}.

To facilitate the forthcoming analysis, we begin with a nonconforming discretization of the Helmholtz decomposition \eqref{continue_helm}.
\begin{lemma}
The following discrete Helmholtz decomposition holds:
\begin{equation}\label{discrete_helm0}
\mathbb{P}_0(\mathcal{T}_h; \mathbb{T}) = \textup{curl}_h \mathring{V}_h^{\mathbb{S}} + \dev\grad_h V_h.
\end{equation}
Concretely, for $\boldsymbol{q}_h \in \mathbb{P}_0(\mathcal{T}_h ; \mathbb{T})$, there exist $\boldsymbol{\tau}_h \in \mathring{V}_h^{\mathbb{S}} $ and $ \boldsymbol{s}_h \in V_h$ such that
\begin{equation}\label{discrete_helm}
\boldsymbol{q}_h = \textup{curl}_h \boldsymbol{\tau}_h + \dev\grad_h \boldsymbol{s}_h, \quad
\| \boldsymbol{\tau}_h \|_{1, h} + \interleave \boldsymbol{s}_h \interleave_{1, h} \lesssim \| \boldsymbol{q}_h \|.
\end{equation}
\end{lemma}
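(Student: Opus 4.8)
The plan is to mimic the proof of the continuous Helmholtz decomposition in Lemma~\ref{lem:helm_decomposition}, replacing the continuous spaces by their nonconforming counterparts and the exactness identity \eqref{eq:curlH1S} by a discrete analogue expressed through the interpolation operator $I_h$. Given $\boldsymbol{q}_h \in \mathbb{P}_0(\mathcal{T}_h;\mathbb{T})$, I would first solve the discrete variational problem: find $\boldsymbol{s}_h \in V_h$ (modulo a suitable constraint to fix the kernel) such that
\begin{equation*}
(\dev\grad_h \boldsymbol{s}_h, \dev\grad_h \boldsymbol{t}_h) + \sum_{F \in \Delta_2(\mathcal{T}_h)} h_F^{-1}([\boldsymbol{s}_h],[\boldsymbol{t}_h])_F = (\boldsymbol{q}_h, \dev\grad_h \boldsymbol{t}_h), \quad \forall\,\boldsymbol{t}_h \in V_h.
\end{equation*}
The broken Korn-type inequality \eqref{discrete_korn_rt} furnishes coercivity of the left-hand side on $V_h$ (once the finite-dimensional kernel $\textrm{RT}$, or rather its intersection with $V_h$, is handled), so Lax--Milgram gives a unique $\boldsymbol{s}_h$ with $\interleave \boldsymbol{s}_h\interleave_{1,h} \lesssim \|\boldsymbol{q}_h\|$. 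Set $\boldsymbol{\eta}_h := \boldsymbol{q}_h - \dev\grad_h \boldsymbol{s}_h \in \mathbb{P}_0(\mathcal{T}_h;\mathbb{T})$, which by construction satisfies $(\boldsymbol{\eta}_h, \dev\grad_h \boldsymbol{t}_h) = 0$ for all $\boldsymbol{t}_h \in V_h$ up to the penalty-induced defect; the estimate $\|\boldsymbol{\eta}_h\| \lesssim \|\boldsymbol{q}_h\|$ is immediate from the triangle inequality and the bound on $\boldsymbol{s}_h$.

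The core remaining task is to produce $\boldsymbol{\tau}_h \in \mathring{V}_h^{\mathbb{S}}$ with $\curl_h \boldsymbol{\tau}_h = \boldsymbol{\eta}_h$ and $\|\boldsymbol{\tau}_h\|_{1,h} \lesssim \|\boldsymbol{\eta}_h\|$. Here I would use the continuous Helmholtz decomposition together with a commuting-type property of the interpolation operator $I_h$: lift $\boldsymbol{\eta}_h$ (viewed in $L^2(\Omega;\mathbb{T})$) via \eqref{eq:curlH1S}-type reasoning. More precisely, because $\boldsymbol{\eta}_h$ is $L^2$-orthogonal to $\dev\grad (H^1/\textrm{RT})$ at the discrete level, I expect one can show it lies (or can be corrected to lie) in $\curl_h \mathring{V}_h^{\mathbb{S}}$ by a counting/duality argument on $\mathbb{P}_0(\mathcal{T}_h;\mathbb{T})$: decompose $\boldsymbol{\eta}_h = \curl_h \boldsymbol{\tau}_h + \boldsymbol{\zeta}_h$ with $\boldsymbol{\zeta}_h$ in the orthogonal complement of $\curl_h \mathring{V}_h^{\mathbb{S}}$, and show $\boldsymbol{\zeta}_h$ must vanish. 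The stability bound $\|\boldsymbol{\tau}_h\|_{1,h} \lesssim \|\boldsymbol{\eta}_h\|$ would follow from the surjectivity of $\curl_h$ onto its image with a discrete inf-sup/open-mapping argument, or alternatively by constructing $\boldsymbol{\tau}_h = I_h \boldsymbol{\tau}$ where $\boldsymbol{\tau} \in H_0^1(\Omega;\mathbb{S})$ is the continuous potential of a suitable $H(\div)$-conforming lift of $\boldsymbol{\eta}_h$, using \eqref{interpolation_weak_continuty_cr} to check $\curl_h I_h \boldsymbol{\tau} = Q_h \curl \boldsymbol{\tau} = \boldsymbol{\eta}_h$ on each element, and \eqref{interpolation_error_cr} plus regularity of the potential for stability.

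The main obstacle I anticipate is exactly this last step: establishing that the piecewise-constant residual $\boldsymbol{\eta}_h$ is actually hit by $\curl_h \mathring{V}_h^{\mathbb{S}}$ with a uniform bound, since $\curl_h I_h$ need not commute cleanly with $\curl$ on $\mathbb{T}$-valued functions unless the projection onto $\mathbb{P}_0$ is inserted, and one must verify that the weak continuity \eqref{eq:weakcontinuityCRtensor} is compatible with the boundary condition $\boldsymbol{\tau}\boldsymbol{n}=0$ encoded in $\mathring{V}_h^{\mathbb{S}}$. A dimension count on $\mathbb{P}_0(\mathcal{T}_h;\mathbb{T})$ versus $\dim \curl_h \mathring{V}_h^{\mathbb{S}} + \dim \dev\grad_h V_h$ (subtracting kernels via the discrete Euler characteristic implied by the Hessian complex \eqref{hessiancomplex}) should confirm that the sum on the right of \eqref{discrete_helm0} exhausts the left-hand side, and combining this with the two stability estimates yields \eqref{discrete_helm}; note that unlike the continuous case the sum need not be direct, which is why the statement uses ``$+$'' rather than ``$\oplus$''.
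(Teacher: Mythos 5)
Your plan runs into a concrete obstruction at the very first step that you then never fully recover from. Suppose you solve the penalized discrete problem as written. Then the residual $\boldsymbol{\eta}_h := \boldsymbol{q}_h - \dev\grad_h\boldsymbol{s}_h$ satisfies
\begin{equation*}
(\boldsymbol{\eta}_h, \dev\grad_h\boldsymbol{t}_h) = \sum_{F\in\Delta_2(\mathcal{T}_h)} h_F^{-1}([\boldsymbol{s}_h],[\boldsymbol{t}_h])_F,
\end{equation*}
which is \emph{not} zero, so $\boldsymbol{\eta}_h$ is not $L^2$-orthogonal to $\dev\grad_h V_h$ and is not the ``discretely divergence-free'' object your second stage needs. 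You cannot drop the penalty either: the broken Korn inequality \eqref{discrete_korn_rt} shows that $\|\dev\grad_h\cdot\|$ alone does not control $\interleave\cdot\interleave_{1,h}$ on $V_h$ (the element-wise kernel of $\dev\grad$ is much larger than $\textrm{RT}$ across a nonconforming mesh), so without the jumps the bilinear form is not coercive and Lax--Milgram fails. This dilemma is intrinsic to the two-stage strategy, and the dimension count / duality argument you sketch as a fallback neither resolves it nor produces the uniform stability bound.

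You do, almost in passing, name the mechanism that actually works: ``constructing $\boldsymbol{\tau}_h=I_h\boldsymbol{\tau}$ \dots\ using \eqref{interpolation_weak_continuty_cr} to check $\curl_h I_h\boldsymbol{\tau}=Q_h\curl\boldsymbol{\tau}$.'' The paper's proof is exactly this idea, applied to the \emph{whole} of $\boldsymbol{q}_h$ at once rather than to a residual: apply the continuous Helmholtz decomposition \eqref{continue_helm} directly to $\boldsymbol{q}_h\in\mathbb{P}_0(\mathcal{T}_h;\mathbb{T})\subset L^2(\Omega;\mathbb{T})$ to obtain $\boldsymbol{\tau}\in H_0^1(\Omega;\mathbb{S})$ and $\boldsymbol{s}\in H^1(\Omega;\mathbb{R}^3)/\textrm{RT}$ with $\|\boldsymbol{\tau}\|_1+\|\boldsymbol{s}\|_1\lesssim\|\boldsymbol{q}_h\|$, then set $\boldsymbol{\tau}_h=I_h\boldsymbol{\tau}$, $\boldsymbol{s}_h=I_h\boldsymbol{s}$. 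The weak commutativity \eqref{interpolation_weak_continuty_cr} gives $\int_T(\curl\boldsymbol{\tau}_h+\dev\grad\boldsymbol{s}_h)\dx=\int_T\boldsymbol{q}_h\dx$ on each element; since both sides are piecewise constant this forces equality, and the stability bound is immediate from \eqref{interpolation_error_cr}. No discrete solve, no penalty, no dimension count, and no need to establish directness of the sum --- the whole argument collapses to interpolating the continuous decomposition. The one thing that makes this work and is worth internalizing: you only need $Q_h(\curl_h\boldsymbol{\tau}_h+\dev\grad_h\boldsymbol{s}_h)=\boldsymbol{q}_h$, and that is already equality because the left-hand side is piecewise constant.
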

\begin{proof}
Applying the Helmholtz decomposition \eqref{continue_helm} to $\boldsymbol{q}_h$, there exist $ \boldsymbol{\tau} \in H_0^1(\Omega; \mathbb{S})$ and $\boldsymbol{s} \in H^1(\Omega; \mathbb{R}^3)/ \textrm{RT}$ such that
\begin{equation*}
\boldsymbol{q}_h = \curl \boldsymbol{\tau} + \dev\grad \boldsymbol{s}, \quad \| \boldsymbol{\tau} \|_1 + \| \boldsymbol{s} \|_1 \lesssim \| \boldsymbol{q}_h \|.
\end{equation*}
Let $ \boldsymbol{\tau}_h = I_h \boldsymbol{\tau}$, $\boldsymbol{s}_h = I_h \boldsymbol{s}$ and $\boldsymbol{p}_h$ any function in $\mathbb{P}_0(\mathcal{T}_h ; \mathbb{T}) $. Then by \eqref{interpolation_weak_continuty_cr}, we have on each $T \in \mathcal{T}_h$  that
\begin{equation}
\begin{aligned}
& \int_{T}(\curl\boldsymbol{\tau}_h + \dev\grad \boldsymbol{s}_h) : \boldsymbol{p}_h\dx  \\
= & \int_{T}(\curl\boldsymbol{\tau}   + \dev\grad \boldsymbol{s}):\boldsymbol{p}_h \dx = \int_{T}\boldsymbol{q}_h:\boldsymbol{p}_h \dx. \nonumber
\end{aligned}
\end{equation}
Consequently, $\boldsymbol{q}_h = \curl_h \boldsymbol{\tau}_h + \dev\grad_h \boldsymbol{s}_h$.
The proof is concluded using \eqref{interpolation_error_cr} and \eqref{interpolation_error_cr_jump}.
\end{proof}

\begin{remark}\rm
The decomposition \eqref{discrete_helm0} is not, in general, a direct sum. We refer to \cite[Corollary 6.4]{Schedensack2025} for some $L^2$-orthogonal decompositions of $\mathbb{P}_0(\mathcal{T}_h;\mathbb{T})$ and $\mathbb{P}_1(\mathcal{T}_h;\mathbb{T})$.
\end{remark}

With the discrete Helmholtz decomposition \eqref{discrete_helm0}, we show the following discrete coercivity and discrete inf-sup condition.
\begin{lemma}
The following inf-sup condition holds:
\begin{equation}\label{discrete_inf_sup}
\| \boldsymbol{q}_h \| \lesssim \sup\limits_{ \boldsymbol{\tau} \in \mathring{V}_h^{\mathbb{S}}, \boldsymbol{s} \in V_h  }\frac{ b_h \left( \boldsymbol{\tau} , \boldsymbol{s}; \boldsymbol{q}_h \right)}{  | \boldsymbol{\tau} |_{1, h} + \interleave \boldsymbol{s} \interleave_{1, h} }, \qquad\forall\,\boldsymbol{q}_h \in \mathbb{P}_0(\mathcal{T}_h; \mathbb{T}).
\end{equation}
\end{lemma}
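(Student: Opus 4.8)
The plan is to replicate the proof of the continuous inf-sup condition \eqref{continue_infsup}, with the Helmholtz decomposition \eqref{continue_helm} replaced by its discrete analogue \eqref{discrete_helm0}. Fix $\boldsymbol{q}_h \in \mathbb{P}_0(\mathcal{T}_h; \mathbb{T})$. First I would invoke the discrete Helmholtz decomposition \eqref{discrete_helm}: there exist $\boldsymbol{\tau}_h \in \mathring{V}_h^{\mathbb{S}}$ and $\boldsymbol{s}_h \in V_h$ with
\[
\boldsymbol{q}_h = \curl_h \boldsymbol{\tau}_h + \dev\grad_h \boldsymbol{s}_h, \qquad \|\boldsymbol{\tau}_h\|_{1,h} + \interleave \boldsymbol{s}_h \interleave_{1,h} \lesssim \|\boldsymbol{q}_h\|.
\]
(Here there is no trace mismatch: for symmetric $\boldsymbol{\tau}_h$ one has $\tr(\curl_h \boldsymbol{\tau}_h) = 0$, and $\dev\grad_h \boldsymbol{s}_h$ is traceless by construction, so both summands indeed lie in $\mathbb{P}_0(\mathcal{T}_h; \mathbb{T})$.)

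Next I would simply test $b_h(\cdot,\cdot;\boldsymbol{q}_h)$ against this pair:
\[
b_h(\boldsymbol{\tau}_h, \boldsymbol{s}_h; \boldsymbol{q}_h) = (\curl_h \boldsymbol{\tau}_h + \dev\grad_h \boldsymbol{s}_h, \boldsymbol{q}_h) = \|\boldsymbol{q}_h\|^2.
\]
Since $|\boldsymbol{\tau}_h|_{1,h} \le \|\boldsymbol{\tau}_h\|_{1,h}$, the denominator satisfies $|\boldsymbol{\tau}_h|_{1,h} + \interleave \boldsymbol{s}_h \interleave_{1,h} \lesssim \|\boldsymbol{q}_h\|$. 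Choosing $(\boldsymbol{\tau}, \boldsymbol{s}) = (\boldsymbol{\tau}_h, \boldsymbol{s}_h)$ in the supremum on the right-hand side of \eqref{discrete_inf_sup} then yields
\[
\sup\limits_{ \boldsymbol{\tau} \in \mathring{V}_h^{\mathbb{S}}, \boldsymbol{s} \in V_h }\frac{ b_h(\boldsymbol{\tau}, \boldsymbol{s}; \boldsymbol{q}_h)}{ |\boldsymbol{\tau}|_{1,h} + \interleave \boldsymbol{s} \interleave_{1,h}} \ge \frac{\|\boldsymbol{q}_h\|^2}{|\boldsymbol{\tau}_h|_{1,h} + \interleave \boldsymbol{s}_h \interleave_{1,h}} \gtrsim \|\boldsymbol{q}_h\|,
\]
which is exactly \eqref{discrete_inf_sup}.

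I do not expect a genuine obstacle here: all the real work has already been absorbed into the discrete Helmholtz decomposition lemma, which supplies a stable discrete potential pair for every piecewise-constant traceless tensor. The only points requiring a moment's care are the routine bookkeeping noted above — that the seminorm $|\boldsymbol{\tau}|_{1,h}$ (rather than the full norm) appears in the inf-sup denominator, controlled trivially by $\|\boldsymbol{\tau}_h\|_{1,h}$, and the consistency of the trace-free structure of the two terms in \eqref{discrete_helm}.
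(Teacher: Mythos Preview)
Your proposal is correct and follows exactly the approach the paper takes: the paper's proof consists of the single sentence ``It is a direct consequence of the discrete Helmholtz decomposition \eqref{discrete_helm},'' and you have simply written out that consequence in full.
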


\begin{proof}
It is a direct consequence of the discrete Helmholtz decomposition \eqref{discrete_helm}.
\end{proof}

\begin{lemma}
For $\left( \boldsymbol{\tau}, \boldsymbol{s}  \right) \in \textup{$ \mathring{V}_h^{\mathbb{S}}$} \times V_h $ satisfying $ b_h \left( \boldsymbol{\tau}, \boldsymbol{s}; \boldsymbol{q} \right) = 0 $ for all $\boldsymbol{q} \in \mathbb{P}_0(\mathcal{T}_h; \mathbb{T})$, we have
\begin{equation}\label{coer}
| \boldsymbol{\tau} |_{1, h}^2 + \interleave \boldsymbol{s} \interleave_{1, h}^2  \lesssim a_h\left(\boldsymbol{\tau}, \boldsymbol{s}; \boldsymbol{\tau}, \boldsymbol{s} \right).
\end{equation}
\end{lemma}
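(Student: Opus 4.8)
The plan is to establish the discrete coercivity \eqref{coer} by decomposing the argument $(\boldsymbol{\tau},\boldsymbol{s})$ according to the discrete Helmholtz decomposition \eqref{discrete_helm0} and exploiting the hypothesis $b_h(\boldsymbol{\tau},\boldsymbol{s};\boldsymbol{q})=0$ for all $\boldsymbol{q}\in\mathbb{P}_0(\mathcal{T}_h;\mathbb{T})$. The key observation is that $\curl_h\boldsymbol{\tau}+\dev\grad_h\boldsymbol{s}$ is itself a piecewise constant traceless tensor, hence a legitimate test function $\boldsymbol{q}$. Choosing $\boldsymbol{q}=\curl_h\boldsymbol{\tau}+\dev\grad_h\boldsymbol{s}$ in the orthogonality condition immediately forces
\begin{equation*}
\curl_h\boldsymbol{\tau}+\dev\grad_h\boldsymbol{s}=0 \quad\text{in }\mathbb{P}_0(\mathcal{T}_h;\mathbb{T}).
\end{equation*}

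From this identity I would separate the two contributions. First, since $\mathring{V}_h^{\mathbb{S}}\subset\mathbb{P}_1(\mathcal{T}_h;\mathbb{S})$, the element-wise curl $\curl_h\boldsymbol{\tau}$ is automatically piecewise constant, and likewise $\dev\grad_h\boldsymbol{s}$ is piecewise constant; so the vanishing holds without any further projection. I would then invoke the $\mathbb{S}$-valued broken Korn-type inequality together with the weak continuity \eqref{eq:weakcontinuityCRtensor} of $\mathring{V}_h^{\mathbb{S}}$ to control $|\boldsymbol{\tau}|_{1,h}$. Concretely, the standard second Korn inequality argument for nonconforming spaces (as in \cite{Brenner2004}) gives $|\boldsymbol{\tau}|_{1,h}^2\lesssim\|\sym\nabla_h\boldsymbol{\tau}\|^2+\sum_F h_F^{-1}\|[\boldsymbol{\tau}]\|_F^2$; but $\boldsymbol{\tau}$ is already $\mathbb{S}$-valued so $\sym\nabla_h\boldsymbol{\tau}=\nabla_h\boldsymbol{\tau}$ does not help directly — instead I should use that a piecewise-linear symmetric tensor with $\curl_h\boldsymbol{\tau}=0$ on each element is an element-wise Hessian, combined with \eqref{eq:weakcontinuityCRtensor}, to bound the jump terms and conclude $|\boldsymbol{\tau}|_{1,h}\lesssim\|\nabla_h\boldsymbol{\tau}\|$ trivially while ensuring no loss. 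In fact the cleaner route: $\|\nabla_h\boldsymbol{\tau}\|^2 = a_h(\boldsymbol{\tau},\boldsymbol{s};\boldsymbol{\tau},\boldsymbol{s}) - \sum_F h_F^{-1}\|[\boldsymbol{s}]\|_F^2 \le a_h(\boldsymbol{\tau},\boldsymbol{s};\boldsymbol{\tau},\boldsymbol{s})$, so $|\boldsymbol{\tau}|_{1,h}^2\le a_h(\boldsymbol{\tau},\boldsymbol{s};\boldsymbol{\tau},\boldsymbol{s})$ is immediate.

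For the $\boldsymbol{s}$-part, I would apply the broken Korn-type inequality \eqref{discrete_korn_rt} to $\boldsymbol{s}\in V_h\subset H^1(\mathcal{T}_h;\mathbb{R}^3)$:
\begin{equation*}
\interleave\boldsymbol{s}\interleave_{1,h}^2 \lesssim \|\dev\grad_h\boldsymbol{s}\|^2 + \sum_{F\in\Delta_2(\mathcal{T}_h)}h_F^{-1}\|[\boldsymbol{s}]\|_F^2.
\end{equation*}
The second sum is exactly the penalty term appearing in $a_h$, hence bounded by $a_h(\boldsymbol{\tau},\boldsymbol{s};\boldsymbol{\tau},\boldsymbol{s})$. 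For the first term, the identity $\dev\grad_h\boldsymbol{s}=-\curl_h\boldsymbol{\tau}$ gives $\|\dev\grad_h\boldsymbol{s}\|=\|\curl_h\boldsymbol{\tau}\|\lesssim|\boldsymbol{\tau}|_{1,h}$, which we have already bounded by $a_h(\boldsymbol{\tau},\boldsymbol{s};\boldsymbol{\tau},\boldsymbol{s})^{1/2}$. Combining the two estimates yields \eqref{coer}.

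The main obstacle — really the only subtlety — is making sure the quantity $\curl_h\boldsymbol{\tau}+\dev\grad_h\boldsymbol{s}$ genuinely lies in the test space $\mathbb{P}_0(\mathcal{T}_h;\mathbb{T})$, i.e. that it is both piecewise constant (clear, since $\boldsymbol{\tau},\boldsymbol{s}$ are piecewise linear) and pointwise traceless: this requires $\tr(\curl_h\boldsymbol{\tau})=0$ for symmetric $\boldsymbol{\tau}$ and $\tr(\dev\grad_h\boldsymbol{s})=0$, both of which hold by the algebraic identities $\tr\curl\boldsymbol{\tau}=2\div(\vskw\boldsymbol{\tau})=0$ when $\boldsymbol{\tau}\in\mathbb{S}$ and $\tr\dev=0$ by definition. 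With that verified, everything else is a chain of already-proved inequalities, so the proof is short.
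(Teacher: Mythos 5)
Your proposal is correct and, after you discard the unnecessary detour about a symmetric-tensor Korn inequality (which you rightly abandon yourself), it is essentially the same argument as the paper's: use the hypothesis to get $\curl_h\boldsymbol{\tau}+\dev\grad_h\boldsymbol{s}=0$ pointwise, bound $|\boldsymbol{\tau}|_{1,h}^2$ trivially by $a_h$, apply the broken Korn-type inequality \eqref{discrete_korn_rt} to $\boldsymbol{s}$, and substitute $\dev\grad_h\boldsymbol{s}=-\curl_h\boldsymbol{\tau}$ so that $\|\dev\grad_h\boldsymbol{s}\|\lesssim|\boldsymbol{\tau}|_{1,h}$. Your explicit check that $\curl_h\boldsymbol{\tau}+\dev\grad_h\boldsymbol{s}$ actually lies in $\mathbb{P}_0(\mathcal{T}_h;\mathbb{T})$ (via $\tr\curl\boldsymbol{\tau}=0$ for $\boldsymbol{\tau}\in\mathbb{S}$) is a worthwhile detail the paper leaves tacit.
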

\begin{proof}
By the assumption, $\curl_h \boldsymbol{\tau} + \dev\grad _h \boldsymbol{s} = 0 $. It follows from the broken Korn-type inequality \eqref{discrete_korn_rt} that
\begin{align*}
| \boldsymbol{\tau} |_{1, h}^2 + \interleave \boldsymbol{s} \interleave_{1, h}^2
& \lesssim | \boldsymbol{\tau} |_{1, h}^2 + \| \dev\grad _h \boldsymbol{s} \|^2_{0} + \sum_{F \in \Delta_{2}(\mathcal{T}_h) } h_F^{-1}\| [\boldsymbol{s}] \|_{F}^2 \\
& = | \boldsymbol{\tau} |_{1, h}^2 + \| \curl_h \boldsymbol{\tau} \|^2_{0} +
\sum_{F \in \Delta_{2}(\mathcal{T}_h) } h_F^{-1}\| [\boldsymbol{s}] \|_{F}^2,
\end{align*}
which completes the proof.
\end{proof}

\begin{theorem}\label{thm:generalized_stokesfemwellposed}
The nonconforming linear element method \eqref{discrete_generalized_stokes} is well-posed, and $\curl_h \boldsymbol{\sigma}_h + \dev\grad _h \boldsymbol{r}_h=0$.
\end{theorem}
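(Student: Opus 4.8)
The plan is to invoke the Babu\v{s}ka--Brezzi theory \cite{BoffiBrezziFortin2013} for the saddle-point system \eqref{discrete_generalized_stokes}, viewed as a mixed problem on the product space $\mathring{V}_h^{\mathbb{S}}\times V_h$ paired with $\mathbb{P}_0(\mathcal{T}_h;\mathbb{T})$. On the first space I would use the norm $|\boldsymbol{\tau}|_{1,h}+\interleave\boldsymbol{s}\interleave_{1,h}$; this is a genuine norm because $\interleave\cdot\interleave_{1,h}$ is already a norm on $H^1(\mathcal{T}_h;\mathbb{R}^3)$ and, by the discrete Poincar\'e inequality \eqref{eq:PoincareCR}, $|\cdot|_{1,h}$ is equivalent to $\|\cdot\|_{1,h}$ on $\mathring{V}_h^{\mathbb{S}}$ owing to the homogeneous boundary condition. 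On $\mathbb{P}_0(\mathcal{T}_h;\mathbb{T})$ I would use $\|\cdot\|$.

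First I would record the boundedness of the two bilinear forms. For $a_h$, the Cauchy--Schwarz inequality controls the gradient part by $|\boldsymbol{\sigma}|_{1,h}|\boldsymbol{\tau}|_{1,h}$, and the face penalty part by $\bigl(\sum_{F}h_F^{-1}\|[\boldsymbol{r}]\|_F^2\bigr)^{1/2}\bigl(\sum_{F}h_F^{-1}\|[\boldsymbol{s}]\|_F^2\bigr)^{1/2}\le\interleave\boldsymbol{r}\interleave_{1,h}\interleave\boldsymbol{s}\interleave_{1,h}$; hence $a_h$ is bounded. Similarly $b_h(\boldsymbol{\tau},\boldsymbol{s};\boldsymbol{p})\le(\|\curl_h\boldsymbol{\tau}\|+\|\dev\grad_h\boldsymbol{s}\|)\,\|\boldsymbol{p}\|\lesssim(|\boldsymbol{\tau}|_{1,h}+\interleave\boldsymbol{s}\interleave_{1,h})\,\|\boldsymbol{p}\|$, so $b_h$ is bounded; and $\boldsymbol{\tau}\mapsto(\boldsymbol{g}_h,\boldsymbol{\tau})$ is clearly a bounded linear functional.

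Next come the two core ingredients, both already proved above: the coercivity of $a_h$ on the discrete kernel $\{(\boldsymbol{\tau},\boldsymbol{s}):b_h(\boldsymbol{\tau},\boldsymbol{s};\boldsymbol{q})=0\ \forall\,\boldsymbol{q}\in\mathbb{P}_0(\mathcal{T}_h;\mathbb{T})\}$ is exactly the inequality \eqref{coer}, and the discrete inf-sup condition for $b_h$ is \eqref{discrete_inf_sup}. Combining boundedness, kernel-coercivity and the inf-sup condition, the Babu\v{s}ka--Brezzi theory delivers existence, uniqueness and stability of $(\boldsymbol{\sigma}_h,\boldsymbol{p}_h,\boldsymbol{r}_h)$, i.e. the well-posedness of \eqref{discrete_generalized_stokes}. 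Finally, for the identity $\curl_h\boldsymbol{\sigma}_h+\dev\grad_h\boldsymbol{r}_h=0$: since $\boldsymbol{\sigma}_h$ and $\boldsymbol{r}_h$ are piecewise linear, $\curl_h\boldsymbol{\sigma}_h+\dev\grad_h\boldsymbol{r}_h\in\mathbb{P}_0(\mathcal{T}_h;\mathbb{T})$, so it is an admissible choice of $\boldsymbol{q}$ in \eqref{discrete_generalized_stokes_2}; substituting it gives $\|\curl_h\boldsymbol{\sigma}_h+\dev\grad_h\boldsymbol{r}_h\|^2=b_h(\boldsymbol{\sigma}_h,\boldsymbol{r}_h;\curl_h\boldsymbol{\sigma}_h+\dev\grad_h\boldsymbol{r}_h)=0$, whence the claim. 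I do not expect a genuine obstacle here; the only delicate point is bookkeeping of norms, namely ensuring that \eqref{coer} and \eqref{discrete_inf_sup} hold in the \emph{same} norm on $\mathring{V}_h^{\mathbb{S}}\times V_h$, which is precisely why the discrete Poincar\'e inequality \eqref{eq:PoincareCR} is used to pass between $|\cdot|_{1,h}$ and $\|\cdot\|_{1,h}$ on $\mathring{V}_h^{\mathbb{S}}$.
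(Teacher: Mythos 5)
Your proposal is correct and follows essentially the same route as the paper: invoke Babu\v{s}ka--Brezzi using the discrete coercivity \eqref{coer} and discrete inf-sup condition \eqref{discrete_inf_sup}, then obtain $\curl_h\boldsymbol{\sigma}_h+\dev\grad_h\boldsymbol{r}_h=0$ from \eqref{discrete_generalized_stokes_2}. The only (cosmetic) difference is the last step: the paper cites the discrete Helmholtz decomposition \eqref{discrete_helm} to conclude, whereas you directly observe that $\curl_h\boldsymbol{\sigma}_h+\dev\grad_h\boldsymbol{r}_h$ is itself a piecewise constant traceless tensor (curl of a symmetric field is traceless, $\dev\grad$ is traceless by construction, and both are piecewise constant since $\boldsymbol{\sigma}_h,\boldsymbol{r}_h$ are piecewise linear), hence admissible as a test function $\boldsymbol{q}$ in \eqref{discrete_generalized_stokes_2}; this is a slightly more self-contained way to reach the same conclusion.
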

\begin{proof}
Employing the discrete inf-sup condition \eqref{discrete_inf_sup} and the discrete coercivity~\eqref{coer}, the well-posedness of the discrete method \eqref{discrete_generalized_stokes} follows from the Babu\v{s}ka-Brezzi theory~\cite{BoffiBrezziFortin2013}.
By the Helmholtz decomposition \eqref{discrete_helm}, $\curl_h \boldsymbol{\sigma}_h + \dev\grad _h \boldsymbol{r}_h=0$ follows from equation \eqref{discrete_generalized_stokes_2}.
\end{proof}

To the best of our knowledge, the only existing numerical method for the generalized tensor-valued Stokes equation \eqref{generalized_stokes} in three dimensions is the following nonconforming method proposed in \cite[Section 5.3.2]{Gallistl2017}: Find $\boldsymbol{\sigma}_h\in \mathring{V}_h^{\mathbb{S}}$ and $\boldsymbol{p}_h \in P_h$ such that
\begin{subequations}\label{gallistl_scheme}
\begin{align}
\left( \nabla_h \boldsymbol{\sigma}_h, \nabla_h \boldsymbol{\tau} \right) + \left( \curl_h \boldsymbol{\tau}, \boldsymbol{p}_h \right)
&= \left( \boldsymbol{g}_h, \boldsymbol{\tau} \right),
&& \forall\, \boldsymbol{\tau} \in \mathring{V}_h^{\mathbb{S}}, \\
\left( \curl_h \boldsymbol{\sigma}_h, \boldsymbol{q} \right)
&= 0,
&& \forall\, \boldsymbol{q} \in P_h,
\end{align}
\end{subequations}
where
$
P_h:=\mathbb{P}_0(\mathcal{T}_h;\mathbb{T})\cap H_0(\div,\Omega;\mathbb{T}).
$
However, since the space $P_h$ does not admit local degrees of freedom, its practical implementation remains unclear and, as noted in~\cite{ChenHuang2022,ChenHuang2024a}, may require vertex-associated degrees of freedom.

The space $P_h$ may be realized by introducing a Lagrange multiplier on each face to enforce normal continuity and a vanishing normal trace on the boundary. In this way, the method \eqref{gallistl_scheme} can be reformulated as follows: Find $( \boldsymbol{\sigma}_h, \boldsymbol{p}_h, \boldsymbol{\lambda}_h ) \in \mathring{V}_h^{\mathbb{S}} \times \mathbb{P}_0(\mathcal{T}_h; \mathbb{T}) \times \mathbb{P}_0(\mathcal{F}_h; \mathbb{R}^3) $ such that
\begin{subequations}\label{gallistl_equivalent_scheme}
\begin{align}
\left( \nabla_h \boldsymbol{\sigma}_h, \nabla_h \boldsymbol{\tau} \right) + \left(   \curl_h \boldsymbol{\tau},  \boldsymbol{p}_h  \right) + \sum_{F \in \mathcal{F}_h} (  [\boldsymbol{p}_h \boldsymbol{n}], \boldsymbol{\mu} )_F & = \left( \boldsymbol{g}_h, \boldsymbol{\tau} \right),   \\
\left(   \curl_h \boldsymbol{\sigma}_h,  \boldsymbol{q}  \right)  +  \sum_{F \in \mathcal{F}_h} (  [\boldsymbol{q}\boldsymbol{n}], \boldsymbol{\lambda}_h )_F & = 0,
\end{align}
for all $( \boldsymbol{\tau}, \boldsymbol{q}, \boldsymbol{\mu}) \in \mathring{V}_h^{\mathbb{S}} \times \mathbb{P}_0(\mathcal{T}_h; \mathbb{T}) \times \mathbb{P}_0(\mathcal{F}_h; \mathbb{R}^3)$,
\end{subequations}
where  $\mathcal{F}_h:=\Delta_2(\mathcal{T}_h)$ and
\[
\mathbb{P}_0(\mathcal{F}_h;\mathbb{R}^3)
:= \bigl\{ v \in L^2(\mathcal{F}_h;\mathbb{R}^3) : v|_F \in \mathbb{P}_0(F;\mathbb{R}^3)\; \text{for all } F \in \mathcal{F}_h \bigr\}.
\]

Let $Q_{\mathcal{F}_h}$ be the projection operator onto $\mathbb{P}_0(\mathcal{F}_h; \mathbb{R}^3)$. Since $Q_{\mathcal{F}_h}: V_h\to \mathbb{P}_0(\mathcal{F}_h; \mathbb{R}^3)$ is bijective, the method \eqref{gallistl_equivalent_scheme} is equivalent to the following scheme: Find
$(\boldsymbol{\sigma}_h, \boldsymbol{p}_h, \boldsymbol{r}_h) \in  \mathring{V}_h^{\mathbb{S}} \times \mathbb{P}_0(\mathcal{T}_h; \mathbb{T}) \times  V_h $ such that
\begin{subequations}\label{discrete_generalized_stokesGallistl}
\begin{align}
\left( \nabla_h \boldsymbol{\sigma}_h, \nabla_h \boldsymbol{\tau} \right) + b_h(\boldsymbol{\tau}, \boldsymbol{s}; \boldsymbol{p}_h ) & = ( \boldsymbol{g}_h, \boldsymbol{\tau} ), && \forall\, \boldsymbol{\tau} \in \mathring{V}_h^{\mathbb{S}}, \boldsymbol{s} \in V_h , \label{discrete_generalized_stokesGallistl_1}        \\
b_h(\boldsymbol{\sigma}_h, \boldsymbol{r}_h; \boldsymbol{q} ) & = 0, && \forall\, \boldsymbol{q} \in \mathbb{P}_0(\mathcal{T}_h; \mathbb{T}). \label{discrete_generalized_stokesGallistl_2}
\end{align}
\end{subequations}

As shown in Section~\ref{sec:discrete_korn}, $\| \dev\grad_h \boldsymbol{v} \|$ is not necessarily a norm on $V_h / \mathrm{RT}$ for general meshes. Consequently, the variable $\boldsymbol{r}_h$ in the scheme \eqref{discrete_generalized_stokesGallistl} is not necessarily unique, and the well-posedness of the scheme is therefore not guaranteed. By contrast, the jump term in our method \eqref{discrete_generalized_stokes} ensures discrete coercivity and the uniqueness of $\boldsymbol{r}_h$, thereby guaranteeing well-posedness.

\begin{remark}\rm
The use of a jump term to ensure the well-posedness of a nonconforming linear finite element method for elasticity was introduced in \cite{Hansbo2003}.
\end{remark}

\subsection{Error analysis}
We now proceed to the error analysis of the nonconforming linear element method~\eqref{discrete_generalized_stokes}.

\begin{lemma}\label{lem:20250902}
Let $\boldsymbol{\tau} = \boldsymbol{\sigma}_h - I_h \boldsymbol{\sigma}$.
Assume $\boldsymbol{\sigma} \in H^2(\Omega; \mathbb{S})$ and $\boldsymbol{p} \in H^1(\Omega; \mathbb{T})$.
We have
\begin{equation}\label{pre:estimate1}
( \sym\curl \boldsymbol{p} - \Delta \boldsymbol{\sigma}, \boldsymbol{\tau})
- ( \nabla_h(I_h \boldsymbol{\sigma}), \nabla_h \boldsymbol{\tau} ) \lesssim h | \boldsymbol{p} |_1 \interleave\boldsymbol{r}_h\interleave_{1, h} + h (| \boldsymbol{p} |_1 + | \boldsymbol{\sigma} |_2) |\boldsymbol{\tau}|_{1, h}.
\end{equation}
\end{lemma}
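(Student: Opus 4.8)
The plan is to use integration by parts on each tetrahedron to rewrite the left-hand side as a sum of face terms involving jumps, and then exploit the weak continuity properties \eqref{eq:weakcontinuityCRvector}--\eqref{eq:weakcontinuityCRtensor} together with standard trace/approximation estimates for the nonconforming interpolant. First I would write $\Delta\boldsymbol{\sigma} = \div\nabla\boldsymbol{\sigma}$ and $\sym\curl\boldsymbol{p}$ in strong form, integrate $(\sym\curl\boldsymbol{p}-\Delta\boldsymbol{\sigma},\boldsymbol{\tau})$ by parts elementwise against $\boldsymbol{\tau}\in\mathring{V}_h^{\mathbb S}$. The $-\Delta\boldsymbol{\sigma}$ term produces the volume term $(\nabla_h\boldsymbol{\sigma},\nabla_h\boldsymbol{\tau})$ minus face terms $\sum_F\int_F(\nabla\boldsymbol{\sigma}\,\boldsymbol{n}_F)\cdot[\boldsymbol{\tau}]$, and the $\sym\curl\boldsymbol{p}$ term produces a volume term $(\boldsymbol{p},\curl_h\boldsymbol{\tau})$ (using that $\boldsymbol{\tau}$ is symmetric, so $(\sym\curl\boldsymbol{p},\boldsymbol{\tau})=(\curl\boldsymbol{p},\boldsymbol{\tau})$ in a suitable pairing, or equivalently move the $\sym$ onto $\boldsymbol{\tau}$) plus face terms $\sum_F\int_F(\text{something involving }\boldsymbol{p}\,\boldsymbol{n}_F)\cdot[\boldsymbol{\tau}]$. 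The volume term $(\boldsymbol{p},\curl_h\boldsymbol{\tau})$ will then be combined with equation \eqref{discrete_generalized_stokes_2}: since $b_h(\boldsymbol{\sigma}_h,\boldsymbol{r}_h;\boldsymbol{q})=0$ for all piecewise constants and $\curl_h\boldsymbol{\sigma}_h+\dev\grad_h\boldsymbol{r}_h=0$ by Theorem \ref{thm:generalized_stokesfemwellposed}, I can replace $(\boldsymbol{p},\curl_h\boldsymbol{\tau})$ up to the projection $Q_h$ and trade it against $-(Q_h\boldsymbol{p},\dev\grad_h\boldsymbol{r}_h)$-type quantities, which is where the $|\boldsymbol{p}|_1\interleave\boldsymbol{r}_h\interleave_{1,h}$ contribution enters after an $\|\boldsymbol{p}-Q_h\boldsymbol{p}\|\lesssim h|\boldsymbol{p}|_1$ estimate.

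Next I would handle the consistency (face) terms. For the term $\sum_F\int_F(\nabla\boldsymbol{\sigma}\,\boldsymbol{n}_F)\cdot[\boldsymbol{\tau}]$, I insert the face-average $Q_{0,F}(\nabla\boldsymbol{\sigma}\,\boldsymbol{n}_F)$: because $\boldsymbol{\tau}\in\mathring{V}_h^{\mathbb S}$ has $\int_F[\boldsymbol{\tau}]\,{\rm d}S=0$ on every face (interior and boundary) by \eqref{eq:weakcontinuityCRtensor}, the constant part integrates to zero, leaving $\sum_F\int_F(\nabla\boldsymbol{\sigma}\,\boldsymbol{n}_F-Q_{0,F}(\cdots))\cdot[\boldsymbol{\tau}]$, which by the Bramble--Hilbert/trace argument is bounded by $h|\boldsymbol{\sigma}|_2|\boldsymbol{\tau}|_{1,h}$ (using that $\|[\boldsymbol{\tau}]\|_F$ is controlled by scaled face norms of $\nabla_h\boldsymbol{\tau}$ via the same averaging trick applied to $\boldsymbol{\tau}$ itself). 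The $\boldsymbol{p}$-face terms are handled identically, inserting $Q_{0,F}$ of the relevant normal trace of $\boldsymbol{p}$ and using \eqref{eq:weakcontinuityCRtensor} to kill the constant part, yielding a bound $h|\boldsymbol{p}|_1|\boldsymbol{\tau}|_{1,h}$. I would also need to subtract and add $I_h\boldsymbol{\sigma}$ appropriately: the target has $(\nabla_h(I_h\boldsymbol{\sigma}),\nabla_h\boldsymbol{\tau})$ rather than $(\nabla_h\boldsymbol{\sigma},\nabla_h\boldsymbol{\tau})$, so I use \eqref{interpolation_weak_continuty_cr} — $\int_T\nabla(\boldsymbol{\sigma}-I_h\boldsymbol{\sigma})=0$ — to replace $(\nabla_h\boldsymbol{\sigma},\nabla_h\boldsymbol{\tau})$ by $(\nabla_h(I_h\boldsymbol{\sigma}),\nabla_h\boldsymbol{\tau})$ plus a remainder that is either zero (if $\nabla_h\boldsymbol{\tau}$ were piecewise constant) or, since $\boldsymbol{\tau}$ is piecewise linear so $\nabla_h\boldsymbol{\tau}$ IS piecewise constant, \emph{exactly} zero. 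This is the cleanest point of the argument.

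The main obstacle I anticipate is bookkeeping the $\sym\curl$ versus $\curl$ discrepancy and getting the face terms from $\sym\curl\boldsymbol{p}$ into the right form: one must be careful that $\boldsymbol{\tau}$ is $\mathbb S$-valued while $\curl_h\boldsymbol{\tau}$ is only $\mathbb T$-valued, so pairings like $(\curl_h\boldsymbol{\tau},\boldsymbol{p})$ and $(\curl\boldsymbol{p},\boldsymbol{\tau})$ must be reconciled via the identity $(\sym\curl\boldsymbol{p},\boldsymbol{\tau})=(\curl\boldsymbol{p},\boldsymbol{\tau})$ (valid since $\boldsymbol{\tau}$ symmetric) and a row-wise integration by parts for the row-wise $\curl$, producing the correct face integrand (a suitable tangential trace of $\boldsymbol{p}$). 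Once the integration-by-parts identity is established cleanly, the estimates are routine: every face term carries a factor $h_F^{1/2}$ from a first-order Bramble--Hilbert estimate on $F$ and another $h_F^{-1/2}$ (or $h_F^{1/2}$ compensated by an inverse inequality) from $[\boldsymbol{\tau}]$, combining to the stated $h$-power, and the $\interleave\boldsymbol{r}_h\interleave_{1,h}$ factor appears only through the $b_h(\boldsymbol{\sigma}_h,\boldsymbol{r}_h;Q_h\boldsymbol{p})=0$ manipulation combined with $\|\boldsymbol{p}-Q_h\boldsymbol{p}\|\lesssim h|\boldsymbol{p}|_1$ and $\|\dev\grad_h\boldsymbol{r}_h\|\le\interleave\boldsymbol{r}_h\interleave_{1,h}$.
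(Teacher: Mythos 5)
Your overall skeleton is right and closely parallels the paper: integrate $(\sym\curl\boldsymbol{p}-\Delta\boldsymbol{\sigma},\boldsymbol{\tau})$ by parts elementwise, bound the face terms by inserting $Q_{0,F}$ of the normal/tangential traces and invoking the weak continuity \eqref{eq:weakcontinuityCRtensor}, and note that $\nabla_h\boldsymbol{\tau}$ is piecewise constant so the replacement of $(\nabla_h\boldsymbol{\sigma},\nabla_h\boldsymbol{\tau})$ by $(\nabla_h(I_h\boldsymbol{\sigma}),\nabla_h\boldsymbol{\tau})$ is exact by \eqref{interpolation_weak_continuty_cr}. But the step where you produce the first term on the right of \eqref{pre:estimate1} is wrong as described.

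You claim the $h|\boldsymbol{p}|_1\interleave\boldsymbol{r}_h\interleave_{1,h}$ bound ``appears only through the $b_h(\boldsymbol{\sigma}_h,\boldsymbol{r}_h;Q_h\boldsymbol{p})=0$ manipulation combined with $\|\boldsymbol{p}-Q_h\boldsymbol{p}\|\lesssim h|\boldsymbol{p}|_1$ and $\|\dev\grad_h\boldsymbol{r}_h\|\le\interleave\boldsymbol{r}_h\interleave_{1,h}$.'' This cannot work: since $\curl_h\boldsymbol{\tau}$ and $\dev\grad_h\boldsymbol{r}_h$ are piecewise \emph{constant}, the $Q_h$ insertion is an identity, i.e.\ $(\boldsymbol p-Q_h\boldsymbol p,\curl_h\boldsymbol{\tau})=0$ and $(Q_h\boldsymbol p,\dev\grad_h\boldsymbol r_h)=(\boldsymbol p,\dev\grad_h\boldsymbol r_h)$, so the projection error $\|\boldsymbol p-Q_h\boldsymbol p\|$ never actually shows up and no factor of $h$ is produced. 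What you are left with is $-(\boldsymbol p,\dev\grad_h\boldsymbol r_h)$, which by Cauchy--Schwarz is only $\mathcal O(1)$, not $\mathcal O(h)$. The paper gets the $h$ by a \emph{second} elementwise integration by parts: using $\div\boldsymbol p=0$, $\tr\boldsymbol p=0$, and $\boldsymbol p\in H_0(\div,\Omega;\mathbb T)$ from Theorem~\ref{gStokes:continue_wellposed}, one has $-(\boldsymbol p,\dev\grad_h\boldsymbol r_h)=-\sum_{T}(\boldsymbol p\boldsymbol n,\boldsymbol r_h)_{\partial T}=\sum_{F\in\Delta_2(\mathring{\mathcal T}_h)}(Q_{0,F}(\boldsymbol p\boldsymbol n)-\boldsymbol p\boldsymbol n,[\boldsymbol r_h])_F$, where the last equality uses the weak continuity \eqref{eq:weakcontinuityCRvector} of $\boldsymbol r_h$ on interior faces and the vanishing of $\boldsymbol p\boldsymbol n$ on $\partial\Omega$; a trace/Bramble--Hilbert estimate then gives $\lesssim h|\boldsymbol p|_1\interleave\boldsymbol r_h\interleave_{1,h}$. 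That consistency-type estimate, not the $L^2$-projection error, is the mechanism you are missing.

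A secondary omission: to replace $\curl_h\boldsymbol{\tau}$ by $-\dev\grad_h\boldsymbol r_h$ (rather than by $\curl_h\boldsymbol{\sigma}_h-\curl_h(I_h\boldsymbol{\sigma})$) you must observe that $\curl_h(I_h\boldsymbol{\sigma})=Q_h(\curl\boldsymbol{\sigma})=0$, which uses $\curl\boldsymbol{\sigma}=0$ from Theorem~\ref{gStokes:continue_wellposed} (equivalently $\boldsymbol{\sigma}=\nabla^2 u$) together with \eqref{interpolation_weak_continuty_cr}; your proposal invokes $\curl_h\boldsymbol{\sigma}_h+\dev\grad_h\boldsymbol r_h=0$ but never disposes of the $\curl_h(I_h\boldsymbol{\sigma})$ piece.
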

\begin{proof}
As $\curl \boldsymbol{\sigma} = 0$, using  \eqref{interpolation_weak_continuty_cr}  we have
\begin{equation*}
\curl_h (I_h \boldsymbol{\sigma}) = Q_{h}(\curl \boldsymbol{\sigma}) = 0.
\end{equation*}
It follows that
\begin{equation} \label{dis_equation1}
\curl_h \boldsymbol{\tau} + \dev\grad _h \boldsymbol{r}_h
= \curl_h \boldsymbol{\sigma}_h + \dev\grad _h \boldsymbol{r}_h = 0.
\end{equation}
By \eqref{dis_equation1}, $\boldsymbol{p} \in H_0(\div,\Omega; \mathbb{T}) \cap \ker(\div)$ (see Theorem \ref{gStokes:continue_wellposed}) and the weak continuity \eqref{eq:weakcontinuityCRvector} of $\boldsymbol{r}_h$, we obtain
\begin{align*}
\left( \boldsymbol{p}, \curl_h \boldsymbol{\tau} \right)
&= -\left( \boldsymbol{p}, \dev\grad _h \boldsymbol{r}_h \right)
= -\sum_{T \in \mathcal{T}_h} \left( \boldsymbol{p}\boldsymbol{n} , \boldsymbol{r}_h \right)_{\partial T} \\
&= \sum_{F \in \Delta_2 (\mathring{\mathcal{T}}_h)} \left(Q_{0,F} (\boldsymbol{p}\boldsymbol{n}) - \boldsymbol{p}\boldsymbol{n}, [\boldsymbol{r}_h] \right)_{F}
\lesssim h | \boldsymbol{p} |_1 \interleave\boldsymbol{r}_h\interleave_{1, h}.
\end{align*}
Similarly, by the weak continuity \eqref{eq:weakcontinuityCRtensor} of $\boldsymbol{\tau}$, we have
\begin{equation*}
-\sum_{T \in \mathcal{T}_h} \left( \boldsymbol{p} \times \boldsymbol{n} + \partial_n \boldsymbol{\sigma}, \boldsymbol{\tau}  \right)_{\partial T}
\lesssim h (| \boldsymbol{p} |_1 + | \boldsymbol{\sigma} |_2) |\boldsymbol{\tau}|_{1, h}. \end{equation*}
Combining the last two inequalities with integration by parts, we have
\begin{align*}
\left( \sym\curl \boldsymbol{p} - \Delta \boldsymbol{\sigma}, \boldsymbol{\tau}\right)
- \left( \nabla\boldsymbol{\sigma}, \nabla_h \boldsymbol{\tau} \right) & = \left( \boldsymbol{p}, \curl_h \boldsymbol{\tau} \right)
- \sum_{T \in \mathcal{T}_h} \left( \boldsymbol{p} \times \boldsymbol{n} + \partial_n \boldsymbol{\sigma}, \boldsymbol{\tau}  \right)_{\partial T} \\
&\lesssim h | \boldsymbol{p} |_1 \interleave\boldsymbol{r}_h\interleave_{1, h} + h (| \boldsymbol{p} |_1 + | \boldsymbol{\sigma} |_2) |\boldsymbol{\tau}|_{1, h}.
\end{align*}
Thus, we conclude \eqref{pre:estimate1} from the last inequality and the estimate \eqref{interpolation_error_cr} of $I_h$.
\end{proof}

\begin{theorem}\label{thm_H1bound}
Let $\left( \boldsymbol{\sigma}, \boldsymbol{p}, 0 \right) \in H_0^1(\Omega; \mathbb{S} ) \times L^2(\Omega; \mathbb{T}) \times H^1(\Omega; \mathbb{R}^3) $ be the solution of problem \eqref{continue_generalized_stokes}, and $ \left( \boldsymbol{\sigma}_h, \boldsymbol{p}_h, \boldsymbol{r}_h \right) \in \mathring{V}_h^{\mathbb{S}} \times \mathbb{P}_0(\mathcal{T}_h; \mathbb{T}) \times V_h $ be the solution of the discrete method \eqref{discrete_generalized_stokes}. Assume $\boldsymbol{\sigma} \in H^2(\Omega; \mathbb{S})$ and $\boldsymbol{p} \in H^1(\Omega; \mathbb{T})$. We have
\begin{equation}\label{H1bound}
|\boldsymbol{\sigma} - \boldsymbol{\sigma}_h |_{1,h} + \| \boldsymbol{p} - \boldsymbol{p}_h \| + \interleave\boldsymbol{r}_h\interleave_{1, h}
\lesssim h(| \boldsymbol{\sigma} |_2+| \boldsymbol{p} |_1) + \|\boldsymbol{g}-\boldsymbol{g}_h\|.
\end{equation}
\end{theorem}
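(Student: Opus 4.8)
The plan is to adopt the standard abstract framework for nonconforming discretizations of saddle-point problems: compare $\boldsymbol{\sigma}_h,\boldsymbol{p}_h,\boldsymbol{r}_h$ with the interpolants $I_h\boldsymbol{\sigma}$, $Q_h\boldsymbol{p}$, $I_h\boldsymbol{r}=0$; reduce the error to consistency functionals by means of the discrete coercivity \eqref{coer} and the discrete inf-sup condition \eqref{discrete_inf_sup}; and then bound these functionals by $O(h)$ via elementwise integration by parts together with the weak continuity of the Crouzeix--Raviart spaces, exactly in the spirit of Lemma~\ref{lem:20250902}. Throughout, Theorem~\ref{gStokes:continue_wellposed} is used to replace $\boldsymbol{r}$ by $0$ and to exploit $\curl\boldsymbol{\sigma}=0$, $\div\boldsymbol{p}=0$, and $\boldsymbol{p}\in H_0(\div,\Omega;\mathbb{T})$.

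First I would set $\boldsymbol{\tau}=\boldsymbol{\sigma}_h-I_h\boldsymbol{\sigma}$ and $\boldsymbol{\rho}=\boldsymbol{p}_h-Q_h\boldsymbol{p}$. Since $\curl\boldsymbol{\sigma}=0$ and \eqref{interpolation_weak_continuty_cr} force $\curl_h(I_h\boldsymbol{\sigma})=0$, equation \eqref{discrete_generalized_stokes_2} gives $\curl_h\boldsymbol{\tau}+\dev\grad_h\boldsymbol{r}_h=0$, so $b_h(\boldsymbol{\tau},\boldsymbol{r}_h;\cdot)\equiv0$ and the pair $(\boldsymbol{\tau},\boldsymbol{r}_h)$ is admissible in \eqref{coer}. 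Testing \eqref{discrete_generalized_stokes_1} with $(\boldsymbol{\tau},\boldsymbol{r}_h)$, using $b_h(\boldsymbol{\tau},\boldsymbol{r}_h;\boldsymbol{p}_h)=0$, the bilinearity of $a_h$, and the identity $a_h(I_h\boldsymbol{\sigma},0;\boldsymbol{\tau},\boldsymbol{r}_h)=(\nabla_h(I_h\boldsymbol{\sigma}),\nabla_h\boldsymbol{\tau})$, I obtain
\[
a_h(\boldsymbol{\tau},\boldsymbol{r}_h;\boldsymbol{\tau},\boldsymbol{r}_h)=(\boldsymbol{g}_h-\boldsymbol{g},\boldsymbol{\tau})+\Big[(\sym\curl\boldsymbol{p}-\Delta\boldsymbol{\sigma},\boldsymbol{\tau})-(\nabla_h(I_h\boldsymbol{\sigma}),\nabla_h\boldsymbol{\tau})\Big],
\]
where \eqref{generalized_stokes} was used. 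The bracket is bounded by Lemma~\ref{lem:20250902}, and $(\boldsymbol{g}_h-\boldsymbol{g},\boldsymbol{\tau})\lesssim\|\boldsymbol{g}-\boldsymbol{g}_h\|\,|\boldsymbol{\tau}|_{1,h}$ by the discrete Poincar\'e inequality \eqref{eq:PoincareCR}. Combining with \eqref{coer} and Young's inequality yields $|\boldsymbol{\tau}|_{1,h}+\interleave\boldsymbol{r}_h\interleave_{1,h}\lesssim h(|\boldsymbol{\sigma}|_2+|\boldsymbol{p}|_1)+\|\boldsymbol{g}-\boldsymbol{g}_h\|$.

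Next I would estimate $\|\boldsymbol{\rho}\|$ through the discrete inf-sup condition \eqref{discrete_inf_sup}. For any $(\boldsymbol{\eta},\boldsymbol{v})\in\mathring{V}_h^{\mathbb{S}}\times V_h$, I rewrite $b_h(\boldsymbol{\eta},\boldsymbol{v};\boldsymbol{\rho})=b_h(\boldsymbol{\eta},\boldsymbol{v};\boldsymbol{p}_h)-b_h(\boldsymbol{\eta},\boldsymbol{v};Q_h\boldsymbol{p})$, replace $b_h(\boldsymbol{\eta},\boldsymbol{v};\boldsymbol{p}_h)$ using \eqref{discrete_generalized_stokes_1}, and note that $(\curl_h\boldsymbol{\eta}+\dev\grad_h\boldsymbol{v},Q_h\boldsymbol{p})=(\curl_h\boldsymbol{\eta}+\dev\grad_h\boldsymbol{v},\boldsymbol{p})$ because $\curl_h\boldsymbol{\eta}+\dev\grad_h\boldsymbol{v}$ is piecewise constant. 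Running the integration-by-parts argument of Lemma~\ref{lem:20250902} with the arbitrary test function $\boldsymbol{\eta}$ — using $\sym\curl\boldsymbol{p}-\Delta\boldsymbol{\sigma}=\boldsymbol{g}$, $\curl\boldsymbol{\sigma}=0$, $\div\boldsymbol{p}=0$, $\boldsymbol{p}\boldsymbol{n}=0$ on $\partial\Omega$, the weak continuities \eqref{eq:weakcontinuityCRtensor} and \eqref{eq:weakcontinuityCRvector}, the discrete Poincar\'e inequality \eqref{eq:PoincareCR}, and the cancellation $(\nabla\boldsymbol{\sigma}-\nabla_h(I_h\boldsymbol{\sigma}),\nabla_h\boldsymbol{\eta})=0$ afforded by \eqref{interpolation_weak_continuty_cr} — one obtains
\[
b_h(\boldsymbol{\eta},\boldsymbol{v};\boldsymbol{\rho})\lesssim\Big(\|\boldsymbol{g}-\boldsymbol{g}_h\|+|\boldsymbol{\tau}|_{1,h}+\interleave\boldsymbol{r}_h\interleave_{1,h}+h(|\boldsymbol{\sigma}|_2+|\boldsymbol{p}|_1)\Big)\big(|\boldsymbol{\eta}|_{1,h}+\interleave\boldsymbol{v}\interleave_{1,h}\big).
\]
Taking the supremum, applying \eqref{discrete_inf_sup}, and inserting the bound from the previous step gives $\|\boldsymbol{\rho}\|\lesssim h(|\boldsymbol{\sigma}|_2+|\boldsymbol{p}|_1)+\|\boldsymbol{g}-\boldsymbol{g}_h\|$. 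Finally, the triangle inequalities $|\boldsymbol{\sigma}-\boldsymbol{\sigma}_h|_{1,h}\le|\boldsymbol{\sigma}-I_h\boldsymbol{\sigma}|_{1,h}+|\boldsymbol{\tau}|_{1,h}$ and $\|\boldsymbol{p}-\boldsymbol{p}_h\|\le\|\boldsymbol{p}-Q_h\boldsymbol{p}\|+\|\boldsymbol{\rho}\|$, combined with the interpolation error estimates \eqref{interpolation_error_cr} for $I_h\boldsymbol{\sigma}$ and with $\|\boldsymbol{p}-Q_h\boldsymbol{p}\|\lesssim h|\boldsymbol{p}|_1$, yield \eqref{H1bound}.

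The main obstacle I expect is the pressure step: Lemma~\ref{lem:20250902} is stated only for the specific function $\boldsymbol{\sigma}_h-I_h\boldsymbol{\sigma}$, so for the inf-sup argument one must redo the elementwise integration by parts for an arbitrary test pair $(\boldsymbol{\eta},\boldsymbol{v})$ and carefully account for every interelement face term — the $\partial_n\boldsymbol{\sigma}$ and $\boldsymbol{p}\times\boldsymbol{n}$ contributions paired with $\boldsymbol{\eta}$, and the $\boldsymbol{p}\boldsymbol{n}$ contribution paired with $\boldsymbol{v}$ — verifying in each case that the zero-mean jump property of the Crouzeix--Raviart space permits replacing the trace by its deviation from the face average and thereby extracting the factor $h$. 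Everything else is a routine assembly of the already-established discrete stability and interpolation bounds.
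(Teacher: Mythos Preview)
Your proposal is correct and follows essentially the same route as the paper: you test the discrete first equation with $(\boldsymbol{\sigma}_h-I_h\boldsymbol{\sigma},\boldsymbol{r}_h)$, exploit $\curl_h(I_h\boldsymbol{\sigma})=0$ to land in the discrete kernel, invoke the coercivity \eqref{coer} together with Lemma~\ref{lem:20250902} to control $|\boldsymbol{\sigma}_h-I_h\boldsymbol{\sigma}|_{1,h}+\interleave\boldsymbol{r}_h\interleave_{1,h}$, and then bound $\|Q_h\boldsymbol{p}-\boldsymbol{p}_h\|$ via the discrete inf-sup condition \eqref{discrete_inf_sup} and the face-term estimates you describe. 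The only cosmetic difference is that in the pressure step the paper passes from $(\nabla_h\boldsymbol{\sigma}_h,\nabla_h\boldsymbol{\eta})$ to $(\nabla\boldsymbol{\sigma},\nabla_h\boldsymbol{\eta})$ by directly inserting the already-obtained bound on $|\boldsymbol{\sigma}-\boldsymbol{\sigma}_h|_{1,h}$, whereas you achieve the same cancellation via the orthogonality $(\nabla\boldsymbol{\sigma}-\nabla_h(I_h\boldsymbol{\sigma}),\nabla_h\boldsymbol{\eta})=0$ and the bound on $|\boldsymbol{\tau}|_{1,h}$; both are equivalent.
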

\begin{proof}
Take $\boldsymbol{\tau} = \boldsymbol{\sigma}_h - I_h \boldsymbol{\sigma} $ and $\boldsymbol{s} = \boldsymbol{r}_h $ in \eqref{discrete_generalized_stokes_1}. By \eqref{dis_equation1},
$\dev\grad _h \boldsymbol{r}_h=-\curl_h \boldsymbol{\tau}$.
Consequently, equation \eqref{discrete_generalized_stokes_1} reduces to
\begin{equation}\label{H1sigma1}
\left( \nabla_h \boldsymbol{\sigma}_h , \nabla_h \boldsymbol{\tau} \right) + \sum_{F \in \Delta_{2}( \mathcal{T}_h) } h_F^{-1} \| [\boldsymbol{r}_h] \|_{F}^2 = \left(\boldsymbol{g}_h, \boldsymbol{\tau} \right),
\end{equation}
and we get from the broken Korn-type inequality \eqref{discrete_korn_rt} that
\begin{align*}
|\boldsymbol{\sigma}_h - I_h \boldsymbol{\sigma} |_{1, h}^2 + \interleave\boldsymbol{r}_h\interleave_{1, h}^2 \eqsim | \boldsymbol{\sigma}_h - I_h \boldsymbol{\sigma} |_{1, h}^2 + \sum_{F \in \Delta_{2}(\mathcal{T}_h) } h_F^{-1} \| [\boldsymbol{r}_h] \|_{F}^2.
\end{align*}
Then using \eqref{H1sigma1} and the fact $ -\Delta \boldsymbol{\sigma} + \sym\curl \boldsymbol{p} = \boldsymbol{g} $ from \eqref{generalized_stokes}, we have
\begin{align*}
|\boldsymbol{\sigma}_h - I_h \boldsymbol{\sigma} |_{1, h}^2 + & \interleave\boldsymbol{r}_h\interleave_{1, h}^2
\eqsim (\boldsymbol{g}_h , \boldsymbol{\tau})
- \left( \nabla_h(I_h \boldsymbol{\sigma}), \nabla_h \boldsymbol{\tau} \right) \\
& = \left( \sym\curl \boldsymbol{p} - \Delta \boldsymbol{\sigma}, \boldsymbol{\tau}\right)
- \left( \nabla_h(I_h \boldsymbol{\sigma}), \nabla_h \boldsymbol{\tau} \right) + (\boldsymbol{g}_h-\boldsymbol{g}, \boldsymbol{\tau}),
\end{align*}
which together with \eqref{pre:estimate1} implies
\begin{equation*}
| \boldsymbol{\sigma}_h - I_h \boldsymbol{\sigma} |_{1, h} + \interleave\boldsymbol{r}_h\interleave_{1, h}
\lesssim
h( |\boldsymbol{p} |_1 + | \boldsymbol{\sigma} |_2)  + \|\boldsymbol{g}-\boldsymbol{g}_h\|.
\end{equation*}
Combining this inequality with the interpolation estimate \eqref{interpolation_error_cr} yields
\begin{equation}\label{error_sigma_H1}
| \boldsymbol{\sigma} - \boldsymbol{\sigma}_h |_{1, h} + \interleave\boldsymbol{r}_h\interleave_{1, h}
\lesssim
h( |\boldsymbol{p} |_1 + | \boldsymbol{\sigma} |_2)  + \|\boldsymbol{g}-\boldsymbol{g}_h\|.
\end{equation}

On the other hand, for any $\boldsymbol{\tau} \in \mathring{V}_h^{\mathbb{S}} $ and $ \boldsymbol{s} \in V_h$, we get from \eqref{discrete_generalized_stokes_1} that
\begin{equation*}
\begin{aligned}
b_h\left( \boldsymbol{\tau}, \boldsymbol{s}; Q_h \boldsymbol{p} - \boldsymbol{p}_h \right)
&= (\curl_h \boldsymbol{\tau} + \dev\grad _h \boldsymbol{s}, \boldsymbol{p})
+ (\nabla_h \boldsymbol{\sigma}_h , \nabla_h \boldsymbol{\tau} ) - (\boldsymbol{g}_h, \boldsymbol{\tau}) \\
&\quad + \sum_{F \in \Delta_{2}( \mathcal{T}_h) } h_F^{-1} \left( [\boldsymbol{r}_h], [\boldsymbol{s}] \right)_{F} \\
&\lesssim (\curl_h \boldsymbol{\tau} + \dev\grad _h \boldsymbol{s}, \boldsymbol{p})
+ (\nabla_h \boldsymbol{\sigma}_h , \nabla_h \boldsymbol{\tau} ) - (\boldsymbol{g}, \boldsymbol{\tau}) \\
&\quad + \interleave\boldsymbol{r}_h\interleave_{1, h}\interleave\boldsymbol{s}\interleave_{1, h} + \|\boldsymbol{g}-\boldsymbol{g}_h\|\|\boldsymbol{\tau}\|.
\end{aligned}
\end{equation*}
Applying a similar argument to the proof of Lemma~\ref{lem:20250902},
and by the weak continuities~\eqref{eq:weakcontinuityCRvector}-\eqref{eq:weakcontinuityCRtensor}, we have
\begin{equation}\label{L2p2}
\begin{aligned}
& \quad\;
(\curl_h \boldsymbol{\tau} + \dev\grad _h \boldsymbol{s}, \boldsymbol{p})
+ \left( \nabla \boldsymbol{\sigma} , \nabla_h \boldsymbol{\tau} \right) - \left( \boldsymbol{g}, \boldsymbol{\tau} \right) \\
& =(\curl_h \boldsymbol{\tau} + \dev\grad _h \boldsymbol{s}, \boldsymbol{p})
+ \left( \nabla \boldsymbol{\sigma} , \nabla_h \boldsymbol{\tau} \right)
- \left(  \sym\curl \boldsymbol{p} , \boldsymbol{\tau} \right)
+ \left( \Delta \boldsymbol{\sigma}, \boldsymbol{\tau} \right) \\
&
= \sum_{T \in \mathcal{T}_h} \left( \partial_n \boldsymbol{\sigma} + \boldsymbol{p} \times \boldsymbol{n}  , \boldsymbol{\tau} \right)_{\partial T}
+ \sum_{T \in \mathcal{T}_h} \left(\boldsymbol{p}\boldsymbol{n}, \boldsymbol{s}\right)_{\partial T} \\
&\lesssim
h\left( | \boldsymbol{p} |_1 + | \boldsymbol{\sigma} |_2 \right) \left( | \boldsymbol{\tau} |_{1, h} + \interleave\boldsymbol{s}\interleave_{1,h} \right).
\end{aligned}
\end{equation}
It follows from \eqref{error_sigma_H1}-\eqref{L2p2} that
\begin{equation*}
b_h\left( \boldsymbol{\tau}, \boldsymbol{s}; Q_h \boldsymbol{p} - \boldsymbol{p}_h \right)
\lesssim
(h(|\boldsymbol{p} |_1 + | \boldsymbol{\sigma} |_2 ) + \|\boldsymbol{g}-\boldsymbol{g}_h\|)
\left( | \boldsymbol{\tau} |_{1, h} + \interleave\boldsymbol{s}\interleave_{1,h} \right).
\end{equation*}
Hence the discrete inf-sup condition \eqref{discrete_inf_sup} implies
\begin{equation*}
\| \boldsymbol{p} - \boldsymbol{p}_h \| \leq \| \boldsymbol{p} - Q_h \boldsymbol{p} \| + \| Q_h \boldsymbol{p} - \boldsymbol{p}_h \|
\lesssim
h \left( |\boldsymbol{p} |_1 + | \boldsymbol{\sigma} |_2 \right) + \|\boldsymbol{g}-\boldsymbol{g}_h\|.
\end{equation*}
This together with \eqref{error_sigma_H1} gives \eqref{H1bound}.
\end{proof}


We then use the duality argument to estimate $\| \boldsymbol{\sigma} - \boldsymbol{\sigma}_h \| $. Let $ \hat{u} \in H_0^3(\Omega) $ be the solution of the following dual problem
\begin{equation} \label{dual_problem}
- \Delta^3 \hat{u} = \div\div \left( \boldsymbol{\sigma} - \boldsymbol{\sigma}_h \right).
\end{equation}
Assume that the dual problem \eqref{dual_problem} admits the regularity estimate
\begin{equation}\label{dual_regu}
\| \hat{u} \|_4 \lesssim \| \boldsymbol{\sigma} - \boldsymbol{\sigma}_h \|.
\end{equation}
For regularity results of polyharmonic equations we refer to \cite{MazyaRossmann2010,KozlovMazyaRossmann2001}.
In addition, we assume that the space $H_0(\div,\Omega; \mathbb{T})\cap H(\sym\curl,\Omega; \mathbb{T})$ is continuously embedded into $H^1(\Omega; \mathbb{T})$, namely,
\begin{equation}\label{traceless_regu}
\|\boldsymbol{q}\|_1 \lesssim \|\div\boldsymbol{q}\|+\|\sym\curl\boldsymbol{q}\|, \quad \forall\,\boldsymbol{q}\in H_0(\div,\Omega; \mathbb{T})\cap H(\sym\curl,\Omega; \mathbb{T}).
\end{equation}
Comparable regularity properties for vector functions on convex domains can be found in \cite[Section~3.5]{GiraultRaviart1986} and \cite[Corollary~5.2]{Mitrea2001}.

\begin{lemma}
Assume that the regularity conditions \eqref{dual_regu}-\eqref{traceless_regu} hold. Then there exist  $ \hat{\boldsymbol{\sigma}} \in H_0^1(\Omega;\mathbb{S}) \cap H^2(\Omega; \mathbb{S}) $ and $ \hat{\boldsymbol{p}} \in H_0(\div, \Omega; \mathbb{T})\cap H^1(\Omega; \mathbb{T}) $ such that
\begin{align}
-\Delta \hat{\boldsymbol{\sigma}} + \sym\curl\hat{\boldsymbol{p}}  = \boldsymbol{\sigma} - \boldsymbol{\sigma}_h,
\;\;  \curl\hat{\boldsymbol{\sigma}} = 0, \;\; \div\hat{\boldsymbol{p}} = 0,  \label{dual_replace} \\
\| \hat{\boldsymbol{\sigma}} \|_2 + \| \hat{\boldsymbol{p}} \|_1  \lesssim \|\boldsymbol{\sigma} - \boldsymbol{\sigma}_h\|. \qquad\qquad\qquad \label{dual_estimate}
\end{align}
\end{lemma}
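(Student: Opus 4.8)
The plan is to solve a generalized tensor-valued Stokes problem with right-hand side $\boldsymbol{\sigma}-\boldsymbol{\sigma}_h$, identify its first component with the Hessian of the solution of the dual problem \eqref{dual_problem}, and then bootstrap regularity via \eqref{dual_regu} and \eqref{traceless_regu}. First I would apply Theorem~\ref{gStokes:continue_wellposed} to the weak formulation \eqref{continue_generalized_stokes} with $\boldsymbol{g}$ replaced by $\boldsymbol{\sigma}-\boldsymbol{\sigma}_h\in L^2(\Omega;\mathbb S)$; this produces $\hat{\boldsymbol{\sigma}}\in H_0^1(\Omega;\mathbb S)$, $\hat{\boldsymbol{p}}\in H_0(\div,\Omega;\mathbb T)$ and $\hat{\boldsymbol{r}}=0$ satisfying $-\Delta\hat{\boldsymbol{\sigma}}+\sym\curl\hat{\boldsymbol{p}}=\boldsymbol{\sigma}-\boldsymbol{\sigma}_h$, $\curl\hat{\boldsymbol{\sigma}}=0$, $\div\hat{\boldsymbol{p}}=0$, together with the rough bound $\|\hat{\boldsymbol{\sigma}}\|_1+\|\hat{\boldsymbol{p}}\|\lesssim\|\boldsymbol{\sigma}-\boldsymbol{\sigma}_h\|$. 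This already establishes \eqref{dual_replace}; it remains to upgrade $\hat{\boldsymbol{\sigma}}$ to $H^2$, $\hat{\boldsymbol{p}}$ to $H^1$, and to sharpen the bound to \eqref{dual_estimate}.

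Next, since $\hat{\boldsymbol{\sigma}}\in H_0^1(\Omega;\mathbb S)$ with $\curl\hat{\boldsymbol{\sigma}}=0$, the exactness of the Hessian complex \eqref{hessiancomplex} at $H_0^1(\Omega;\mathbb S)$ yields $\hat u\in H_0^3(\Omega)$ with $\hat{\boldsymbol{\sigma}}=\nabla^2\hat u$. Applying $\div\div$ to the first identity in \eqref{dual_replace} and using the divdiv-complex identity $\div\div\sym\curl=0$ together with $\div\div\nabla^2=\Delta^2$ (so that $\div\div\Delta\hat{\boldsymbol{\sigma}}=\Delta\,\div\div\nabla^2\hat u=\Delta^3\hat u$), I obtain $-\Delta^3\hat u=\div\div(\boldsymbol{\sigma}-\boldsymbol{\sigma}_h)$ in the sense of distributions. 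Since $-\Delta^3\colon H_0^3(\Omega)\to H^{-3}(\Omega)$ is an isomorphism, $\hat u$ is precisely the solution of the dual problem \eqref{dual_problem}, so \eqref{dual_regu} applies and $\|\hat u\|_4\lesssim\|\boldsymbol{\sigma}-\boldsymbol{\sigma}_h\|$. Hence $\|\hat{\boldsymbol{\sigma}}\|_2=\|\nabla^2\hat u\|_2\lesssim\|\hat u\|_4\lesssim\|\boldsymbol{\sigma}-\boldsymbol{\sigma}_h\|$, which gives $\hat{\boldsymbol{\sigma}}\in H^2(\Omega;\mathbb S)$ together with its share of \eqref{dual_estimate}.

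For $\hat{\boldsymbol{p}}$, I would rearrange the first identity in \eqref{dual_replace} to $\sym\curl\hat{\boldsymbol{p}}=(\boldsymbol{\sigma}-\boldsymbol{\sigma}_h)+\Delta\hat{\boldsymbol{\sigma}}$ and note that $\Delta\hat{\boldsymbol{\sigma}}=\Delta\nabla^2\hat u=\nabla^2(\Delta\hat u)$, so $\|\Delta\hat{\boldsymbol{\sigma}}\|\lesssim\|\hat u\|_4\lesssim\|\boldsymbol{\sigma}-\boldsymbol{\sigma}_h\|$; in particular $\sym\curl\hat{\boldsymbol{p}}\in L^2(\Omega;\mathbb S)$ with $\|\sym\curl\hat{\boldsymbol{p}}\|\lesssim\|\boldsymbol{\sigma}-\boldsymbol{\sigma}_h\|$. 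Thus $\hat{\boldsymbol{p}}\in H_0(\div,\Omega;\mathbb T)\cap H(\sym\curl,\Omega;\mathbb T)$ with $\div\hat{\boldsymbol{p}}=0$, and the embedding \eqref{traceless_regu} gives $\|\hat{\boldsymbol{p}}\|_1\lesssim\|\div\hat{\boldsymbol{p}}\|+\|\sym\curl\hat{\boldsymbol{p}}\|=\|\sym\curl\hat{\boldsymbol{p}}\|\lesssim\|\boldsymbol{\sigma}-\boldsymbol{\sigma}_h\|$, so $\hat{\boldsymbol{p}}\in H^1(\Omega;\mathbb T)$. Combining the two bounds yields \eqref{dual_estimate}.

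Once the dual Stokes solution is in hand everything is essentially bookkeeping; the step that needs care is the distributional computation $\div\div(-\Delta\hat{\boldsymbol{\sigma}}+\sym\curl\hat{\boldsymbol{p}})=-\Delta^3\hat u$, in particular the use of $\div\div\sym\curl=0$ from the divdiv complex, and the subsequent verification that the $\hat u$ constructed here genuinely solves \eqref{dual_problem} so that \eqref{dual_regu} may be invoked, which in turn rests on uniqueness for $-\Delta^3$ on $H_0^3(\Omega)$.
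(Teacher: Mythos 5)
Your proof is correct, and it reaches the same estimates by appealing to the same two regularity hypotheses \eqref{dual_regu} and \eqref{traceless_regu}, but the \emph{construction} of the pair $(\hat{\boldsymbol{\sigma}},\hat{\boldsymbol{p}})$ is organized differently from the paper's. The paper starts from the triharmonic dual solution $\hat u$ of \eqref{dual_problem}, sets $\hat{\boldsymbol{\sigma}}=\nabla^2\hat u$, rewrites the dual equation as $\div\div\bigl(\Delta\hat{\boldsymbol{\sigma}}+\boldsymbol{\sigma}-\boldsymbol{\sigma}_h\bigr)=0$, and then invokes the exactness of the divdiv complex to \emph{produce} a $\hat{\boldsymbol{p}}\in H(\sym\curl,\Omega;\mathbb T)\cap H_0(\div,\Omega;\mathbb T)$ with $\sym\curl\hat{\boldsymbol{p}}=\Delta\hat{\boldsymbol{\sigma}}+\boldsymbol{\sigma}-\boldsymbol{\sigma}_h$ and $\div\hat{\boldsymbol{p}}=0$. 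You run the logic in the opposite direction: you invoke Theorem~\ref{gStokes:continue_wellposed} as a black box to get $(\hat{\boldsymbol{\sigma}},\hat{\boldsymbol{p}})$ solving the generalized tensor-valued Stokes problem with right-hand side $\boldsymbol{\sigma}-\boldsymbol{\sigma}_h$ — which directly delivers $\curl\hat{\boldsymbol{\sigma}}=0$, $\div\hat{\boldsymbol{p}}=0$, and $\hat{\boldsymbol{p}}\in H_0(\div,\Omega;\mathbb T)$ with no extra argument — and only afterwards use the Hessian complex to exhibit $\hat{\boldsymbol{\sigma}}=\nabla^2\hat u$ and verify, via $\div\div\sym\curl=0$, that $\hat u$ coincides with the solution of \eqref{dual_problem}. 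Your route buys a cleaner derivation of the boundary condition on $\hat{\boldsymbol{p}}$ (the paper's step from the displayed divdiv complex to a representative in $H_0(\div)$ with $\div\hat{\boldsymbol{p}}=0$ is slightly compressed), at the modest cost of one extra identification step: the distributional computation showing $\hat u$ really is the dual solution, which you correctly flag as relying on uniqueness for $-\Delta^3$ on $H_0^3(\Omega)$. Both proofs are sound; yours is a legitimate and arguably tidier reorganization.
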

\begin{proof}
Let $\hat{\boldsymbol{\sigma}} = \nabla^2 \hat{u} \in H_0^1(\Omega; \mathbb{S}) \cap \text{ker(curl)} $, then \eqref{dual_problem} can be written as
\begin{equation*}
- \div\div \Delta \hat{\boldsymbol{\sigma}} = \div\div ( \boldsymbol{\sigma} - \boldsymbol{\sigma}_h ),
\end{equation*}
or equivalently
\begin{equation*}
\div\div\left( \Delta \hat{\boldsymbol{\sigma}} + \boldsymbol{\sigma} - \boldsymbol{\sigma}_h \right) = 0.
\end{equation*}
By the divdiv complex \cite{ChenHuang2022a,ArnoldHu2021}
\begin{equation*}
\fontsize{5pt}{6pt}\selectfont
H^1(\Omega;\mathbb R^3) \xrightarrow{\dev\grad} H(\sym\curl,\Omega;\mathbb T) \xrightarrow{\sym\curl}  H(\div\div,\Omega;\mathbb S)\xrightarrow{\div\div}L^2(\Omega) \xrightarrow{}0,
\end{equation*}
there exists a $\hat{\boldsymbol{p}} \in H(\sym\curl,\Omega;\mathbb T)\cap H_0(\div, \Omega; \mathbb{T})$ satisfying
\begin{equation*}
\sym\curl \hat{\boldsymbol{p}} = \Delta \hat{\boldsymbol{\sigma}} + \boldsymbol{\sigma} - \boldsymbol{\sigma}_h, \quad \div\hat{\boldsymbol{p}} = 0,
\end{equation*}
which combined with the assumption \eqref{traceless_regu} indicates
\begin{equation*}
\| \hat{\boldsymbol{p}} \|_1 \lesssim \| \Delta \hat{\boldsymbol{\sigma}} + \boldsymbol{\sigma} - \boldsymbol{\sigma}_h \| \lesssim \| \hat{\boldsymbol{\sigma}} \|_2 + \| \boldsymbol{\sigma} - \boldsymbol{\sigma}_h \|.
\end{equation*}
Then \eqref{dual_replace} is true, and \eqref{dual_estimate} follows from \eqref{dual_regu}.
\end{proof}

\begin{lemma}
Assume that the regularity conditions \eqref{dual_regu}-\eqref{traceless_regu} are satisfied. Under the assumptions of Theorem~\ref{thm_H1bound}, we have
\begin{equation}\label{pre:dualestimate1}
(\boldsymbol{\sigma} - \boldsymbol{\sigma}_h , \sym\curl \hat{\boldsymbol{p}}) \lesssim   \|\boldsymbol{\sigma} - \boldsymbol{\sigma}_h\|\big(h^2(| \boldsymbol{\sigma} |_2 + |\boldsymbol{p}|_1) + h\|\boldsymbol{g}-\boldsymbol{g}_h\|\big).
\end{equation}
\end{lemma}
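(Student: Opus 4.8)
I want to bound the scalar quantity $(\boldsymbol{\sigma} - \boldsymbol{\sigma}_h, \sym\curl\hat{\boldsymbol{p}})$ by moving the operator $\sym\curl$ off $\hat{\boldsymbol{p}}$ and onto the error $\boldsymbol{\sigma} - \boldsymbol{\sigma}_h$. Since $\hat{\boldsymbol{p}}\in H_0(\div,\Omega;\mathbb{T})\cap H^1(\Omega;\mathbb{T})$ and $\boldsymbol{\sigma}_h\in\mathring V_h^{\mathbb{S}}$ is only piecewise smooth, I expect an element-wise integration by parts to produce the consistency term $(\curl_h(\boldsymbol{\sigma}-\boldsymbol{\sigma}_h),\hat{\boldsymbol{p}})$ together with boundary terms of the form $\sum_{T}(\partial_n\boldsymbol{\sigma}_h + \hat{\boldsymbol{p}}\times\boldsymbol{n},\cdot)_{\partial T}$ and $\sum_T(\hat{\boldsymbol{p}}\boldsymbol{n},\cdot)_{\partial T}$, exactly as in the proofs of Lemma~\ref{lem:20250902} and in \eqref{L2p2}. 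The key point is that, from Theorem~\ref{gStokes:continue_wellposed} we have $\curl\boldsymbol{\sigma}=0$, and from Theorem~\ref{thm:generalized_stokesfemwellposed} we have $\curl_h\boldsymbol{\sigma}_h + \dev\grad_h\boldsymbol{r}_h = 0$, so $\curl_h\boldsymbol{\sigma}_h = -\dev\grad_h\boldsymbol{r}_h$. Thus the volume term $(\curl_h(\boldsymbol{\sigma}-\boldsymbol{\sigma}_h),\hat{\boldsymbol{p}})$ becomes $(\dev\grad_h\boldsymbol{r}_h,\hat{\boldsymbol{p}})$, which I integrate by parts once more against $\hat{\boldsymbol{p}}\in H_0(\div,\Omega;\mathbb{T})$ with $\div\hat{\boldsymbol{p}}=0$, yielding only a face term controlled by the weak continuity \eqref{eq:weakcontinuityCRvector} of $\boldsymbol{r}_h$.

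\textbf{Carrying it out.} First I write $(\boldsymbol{\sigma}-\boldsymbol{\sigma}_h,\sym\curl\hat{\boldsymbol{p}}) = (\boldsymbol{\sigma}-\boldsymbol{\sigma}_h,\curl\hat{\boldsymbol{p}})$ using symmetry of $\boldsymbol{\sigma}-\boldsymbol{\sigma}_h$, then integrate by parts element-wise:
\begin{equation*}
(\boldsymbol{\sigma}-\boldsymbol{\sigma}_h,\curl\hat{\boldsymbol{p}}) = (\curl_h(\boldsymbol{\sigma}-\boldsymbol{\sigma}_h),\hat{\boldsymbol{p}}) - \sum_{T\in\mathcal{T}_h}((\boldsymbol{\sigma}-\boldsymbol{\sigma}_h)\times\boldsymbol{n},\hat{\boldsymbol{p}})_{\partial T}.
\end{equation*}
For the volume term, use $\curl\boldsymbol{\sigma}=0$ and $\curl_h\boldsymbol{\sigma}_h=-\dev\grad_h\boldsymbol{r}_h$ to get $(\curl_h(\boldsymbol{\sigma}-\boldsymbol{\sigma}_h),\hat{\boldsymbol{p}}) = (\dev\grad_h\boldsymbol{r}_h,\hat{\boldsymbol{p}})$; integrating by parts and invoking $\div\hat{\boldsymbol{p}}=0$ reduces this to $\sum_{F\in\Delta_2(\mathring{\mathcal{T}}_h)}(Q_{0,F}(\hat{\boldsymbol{p}}\boldsymbol{n})-\hat{\boldsymbol{p}}\boldsymbol{n},[\boldsymbol{r}_h])_F \lesssim h|\hat{\boldsymbol{p}}|_1\interleave\boldsymbol{r}_h\interleave_{1,h}$. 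For the face term $\sum_T((\boldsymbol{\sigma}-\boldsymbol{\sigma}_h)\times\boldsymbol{n},\hat{\boldsymbol{p}})_{\partial T}$, I insert $\pm I_h\boldsymbol{\sigma}$: the piece with $\boldsymbol{\sigma}-I_h\boldsymbol{\sigma}$ is handled by standard trace/interpolation estimates $\lesssim h|\boldsymbol{\sigma}|_2|\hat{\boldsymbol{p}}|_1$ (after subtracting the face-average of $\hat{\boldsymbol{p}}$, using that $\boldsymbol{\sigma}-I_h\boldsymbol{\sigma}$ has zero face-mean is not quite it — rather one subtracts $Q_{0,F}\hat{\boldsymbol{p}}$ and uses that trace of $\boldsymbol{\sigma}-I_h\boldsymbol{\sigma}$ is $O(h^{1/2}|\boldsymbol{\sigma}|_2)$), while the piece with $I_h\boldsymbol{\sigma}-\boldsymbol{\sigma}_h\in\mathring V_h^{\mathbb{S}}$ uses weak continuity \eqref{eq:weakcontinuityCRtensor}: subtracting $Q_{0,F}\hat{\boldsymbol{p}}$ gives $\lesssim h|\hat{\boldsymbol{p}}|_1|I_h\boldsymbol{\sigma}-\boldsymbol{\sigma}_h|_{1,h}\lesssim h|\hat{\boldsymbol{p}}|_1(|\boldsymbol{\sigma}-\boldsymbol{\sigma}_h|_{1,h}+h|\boldsymbol{\sigma}|_2)$.

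\textbf{Concluding.} Collecting the bounds, every term carries a factor $h|\hat{\boldsymbol{p}}|_1$ multiplied by either $\interleave\boldsymbol{r}_h\interleave_{1,h}$, $|\boldsymbol{\sigma}-\boldsymbol{\sigma}_h|_{1,h}$, or $h|\boldsymbol{\sigma}|_2$. By Theorem~\ref{thm_H1bound}, $|\boldsymbol{\sigma}-\boldsymbol{\sigma}_h|_{1,h} + \interleave\boldsymbol{r}_h\interleave_{1,h}\lesssim h(|\boldsymbol{\sigma}|_2+|\boldsymbol{p}|_1)+\|\boldsymbol{g}-\boldsymbol{g}_h\|$, so the whole expression is $\lesssim h|\hat{\boldsymbol{p}}|_1\big(h(|\boldsymbol{\sigma}|_2+|\boldsymbol{p}|_1)+\|\boldsymbol{g}-\boldsymbol{g}_h\|\big)$. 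Finally, $|\hat{\boldsymbol{p}}|_1\leq\|\hat{\boldsymbol{p}}\|_1\lesssim\|\boldsymbol{\sigma}-\boldsymbol{\sigma}_h\|$ by \eqref{dual_estimate} yields \eqref{pre:dualestimate1}. \textbf{The main obstacle} I anticipate is bookkeeping the various face terms correctly — in particular making sure that in every boundary sum I am allowed to subtract the $L^2(F)$-projection of the smooth factor $\hat{\boldsymbol{p}}$ (or $\hat{\boldsymbol{p}}\boldsymbol{n}$, or $\hat{\boldsymbol{p}}\times\boldsymbol{n}$) before applying the weak-continuity relations \eqref{eq:weakcontinuityCRvector}–\eqref{eq:weakcontinuityCRtensor}, and pairing it with the right trace/interpolation/scaling estimate so that the net power of $h$ is $2$ on the $|\boldsymbol{\sigma}|_2,|\boldsymbol{p}|_1$ terms and $1$ on the data term — this is precisely the mechanism already exercised in Lemma~\ref{lem:20250902} and in estimate \eqref{L2p2}, so it is routine but must be done carefully.
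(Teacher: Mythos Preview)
Your approach is essentially the paper's: integrate by parts elementwise, use $\curl\boldsymbol{\sigma}=0$ and $\curl_h\boldsymbol{\sigma}_h=-\dev\grad_h\boldsymbol{r}_h$ to convert the volume term to $(\dev\grad_h\boldsymbol{r}_h,\hat{\boldsymbol{p}})$, integrate by parts again using $\div\hat{\boldsymbol{p}}=0$ and $\hat{\boldsymbol{p}}\boldsymbol{n}|_{\partial\Omega}=0$, and estimate both resulting face sums via the weak continuities \eqref{eq:weakcontinuityCRvector}--\eqref{eq:weakcontinuityCRtensor} to arrive at $h|\hat{\boldsymbol{p}}|_1(|\boldsymbol{\sigma}-\boldsymbol{\sigma}_h|_{1,h}+\interleave\boldsymbol{r}_h\interleave_{1,h})$, then invoke \eqref{H1bound} and \eqref{dual_estimate}. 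The only difference is that your insertion of $\pm I_h\boldsymbol{\sigma}$ in the face term is unnecessary: since $\boldsymbol{\sigma}\in H_0^1(\Omega;\mathbb S)$ has no jumps and $\boldsymbol{\sigma}_h\in\mathring V_h^{\mathbb S}$ satisfies \eqref{eq:weakcontinuityCRtensor}, the jump $[\boldsymbol{\sigma}-\boldsymbol{\sigma}_h]$ itself has zero face mean, so one can subtract $Q_{0,F}(\hat{\boldsymbol{p}}\times\boldsymbol{n})$ directly and obtain $\lesssim h|\hat{\boldsymbol{p}}|_1|\boldsymbol{\sigma}-\boldsymbol{\sigma}_h|_{1,h}$ without the split.
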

\begin{proof}
Using integration by parts, we get from \eqref{dis_equation1} and \eqref{dual_replace} that
\begin{align*}
(\boldsymbol{\sigma} - \boldsymbol{\sigma}_h , \sym\curl \hat{\boldsymbol{p}})
& = (\curl_h \left(\boldsymbol{\sigma} - \boldsymbol{\sigma}_h\right), \hat{\boldsymbol{p}}) - \sum_{T \in \mathcal{T}_h} (\hat{\boldsymbol{p}} \times \boldsymbol{n}, \boldsymbol{\sigma} - \boldsymbol{\sigma}_h)_{\partial T} \\
&= (\dev\grad _h \boldsymbol{r}_h, \hat{\boldsymbol{p}} )
- \sum_{T \in \mathcal{T}_h} (\hat{\boldsymbol{p}} \times \boldsymbol{n}, \boldsymbol{\sigma} - \boldsymbol{\sigma}_h)_{\partial T} \\
&= \sum_{T \in \mathcal{T}_h} (\hat{\boldsymbol{p}}\boldsymbol{n}, \boldsymbol{r}_h)_{\partial T}
- \sum_{T \in \mathcal{T}_h} (\hat{\boldsymbol{p}} \times \boldsymbol{n}, \boldsymbol{\sigma} - \boldsymbol{\sigma}_h)_{\partial T}.
\end{align*}
Applying a similar argument to the proof of Lemma~\ref{lem:20250902},
and by the weak continuities~\eqref{eq:weakcontinuityCRvector}-\eqref{eq:weakcontinuityCRtensor},
we have
\begin{equation*}
(\boldsymbol{\sigma} - \boldsymbol{\sigma}_h , \sym\curl \hat{\boldsymbol{p}}) \lesssim h|\hat{\boldsymbol{p}}|_1(|\boldsymbol{\sigma} - \boldsymbol{\sigma}_h |_{1,h} + \interleave\boldsymbol{r}_h\interleave_{1, h}).
\end{equation*}
Thus, the estimate \eqref{pre:dualestimate1} follows from \eqref{H1bound} and \eqref{dual_estimate}.
\end{proof}

\begin{lemma}
Assume that the regularity conditions \eqref{dual_regu}-\eqref{traceless_regu} are satisfied. Under the assumptions of Theorem~\ref{thm_H1bound}, we have
\begin{equation}\label{pre:dualestimate2}
( \boldsymbol{\sigma} - \boldsymbol{\sigma}_h , -\Delta \hat{\boldsymbol{\sigma}}) \lesssim   \|\boldsymbol{\sigma} - \boldsymbol{\sigma}_h\|\big(h^2(| \boldsymbol{\sigma} |_2+|\boldsymbol{p} |_1) + h\|\boldsymbol{g}-\boldsymbol{g}_h\|+\|\boldsymbol{g}-\boldsymbol{g}_h\|_{-1}\big).
\end{equation}
\end{lemma}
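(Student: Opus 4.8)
The plan is a duality/consistency argument in the spirit of Lemma~\ref{lem:20250902}. Since $\hat{\boldsymbol\sigma}\in H^2(\Omega;\mathbb S)$, integration by parts on each element gives
\[
(\boldsymbol\sigma-\boldsymbol\sigma_h,-\Delta\hat{\boldsymbol\sigma})=(\nabla_h(\boldsymbol\sigma-\boldsymbol\sigma_h),\nabla\hat{\boldsymbol\sigma})-\sum_{T\in\mathcal T_h}(\boldsymbol\sigma-\boldsymbol\sigma_h,\partial_n\hat{\boldsymbol\sigma})_{\partial T},
\]
and I would bound the volume term and the boundary term separately.

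For the volume term, recall $\curl\hat{\boldsymbol\sigma}=0$ from \eqref{dual_replace}. Taking $(\boldsymbol\tau,\boldsymbol s)=(\hat{\boldsymbol\sigma},0)$ in \eqref{continue_generalized_stokes1} and $(\boldsymbol\tau,\boldsymbol s)=(I_h\hat{\boldsymbol\sigma},0)$ in \eqref{discrete_generalized_stokes_1}, and noting $b_h(I_h\hat{\boldsymbol\sigma},0;\boldsymbol p_h)=(\curl_h(I_h\hat{\boldsymbol\sigma}),\boldsymbol p_h)=(Q_h(\curl\hat{\boldsymbol\sigma}),\boldsymbol p_h)=0$ by \eqref{interpolation_weak_continuty_cr}, one obtains $(\nabla\boldsymbol\sigma,\nabla\hat{\boldsymbol\sigma})=(\boldsymbol g,\hat{\boldsymbol\sigma})$ and $(\nabla_h\boldsymbol\sigma_h,\nabla_h(I_h\hat{\boldsymbol\sigma}))=(\boldsymbol g_h,I_h\hat{\boldsymbol\sigma})$. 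Moreover $\nabla_h\boldsymbol\sigma_h$ is piecewise constant and $\int_T\nabla(\hat{\boldsymbol\sigma}-I_h\hat{\boldsymbol\sigma})=0$, so $(\nabla_h\boldsymbol\sigma_h,\nabla\hat{\boldsymbol\sigma})=(\nabla_h\boldsymbol\sigma_h,\nabla_h(I_h\hat{\boldsymbol\sigma}))$; hence
\[
(\nabla_h(\boldsymbol\sigma-\boldsymbol\sigma_h),\nabla\hat{\boldsymbol\sigma})=(\boldsymbol g,\hat{\boldsymbol\sigma}-I_h\hat{\boldsymbol\sigma})+(\boldsymbol g-\boldsymbol g_h,\hat{\boldsymbol\sigma})+(\boldsymbol g-\boldsymbol g_h,I_h\hat{\boldsymbol\sigma}-\hat{\boldsymbol\sigma}).
\]
Bounding these three terms by $\|\boldsymbol g\|\,\|\hat{\boldsymbol\sigma}-I_h\hat{\boldsymbol\sigma}\|$, $\|\boldsymbol g-\boldsymbol g_h\|_{-1}\|\hat{\boldsymbol\sigma}\|_1$ and $\|\boldsymbol g-\boldsymbol g_h\|\,\|I_h\hat{\boldsymbol\sigma}-\hat{\boldsymbol\sigma}\|$, then using $\|\boldsymbol g\|\lesssim|\boldsymbol\sigma|_2+|\boldsymbol p|_1$, the interpolation estimate \eqref{interpolation_error_cr} and $\|\hat{\boldsymbol\sigma}\|_2\lesssim\|\boldsymbol\sigma-\boldsymbol\sigma_h\|$ from \eqref{dual_estimate}, produces a bound of the form stated in \eqref{pre:dualestimate2}.

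For the boundary term, since $\boldsymbol\sigma\in H_0^1(\Omega;\mathbb S)$ is single-valued across interior faces and vanishes on $\partial\Omega$ while $\partial_n\hat{\boldsymbol\sigma}$ is single-valued across interior faces, one has $\sum_T(\boldsymbol\sigma,\partial_n\hat{\boldsymbol\sigma})_{\partial T}=0$, so the term reduces to $\sum_T(\boldsymbol\sigma_h,\partial_n\hat{\boldsymbol\sigma})_{\partial T}$. Using the weak continuity \eqref{eq:weakcontinuityCRtensor} to subtract $Q_{0,F}(\partial_n\hat{\boldsymbol\sigma})$ face by face, followed by Cauchy--Schwarz and the approximation bound $\|\partial_n\hat{\boldsymbol\sigma}-Q_{0,F}(\partial_n\hat{\boldsymbol\sigma})\|_F\lesssim h_F^{1/2}|\hat{\boldsymbol\sigma}|_{2,T}$, this is $\lesssim\big(\sum_{F\in\Delta_2(\mathcal T_h)}h_F^{-1}\|[\boldsymbol\sigma_h]\|_F^2\big)^{1/2}h|\hat{\boldsymbol\sigma}|_2$. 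The crucial step is to control $\sum_F h_F^{-1}\|[\boldsymbol\sigma_h]\|_F^2=\sum_F h_F^{-1}\|[\boldsymbol\sigma-\boldsymbol\sigma_h]\|_F^2$: splitting $\boldsymbol\sigma-\boldsymbol\sigma_h=(\boldsymbol\sigma-I_h\boldsymbol\sigma)+(I_h\boldsymbol\sigma-\boldsymbol\sigma_h)$, the first part contributes $\lesssim h^2|\boldsymbol\sigma|_2^2$ by \eqref{interpolation_error_cr} and a trace inequality, while the second part lies in $\mathring{V}_h^{\mathbb S}$, so its jumps have vanishing face averages and the sum is $\lesssim|I_h\boldsymbol\sigma-\boldsymbol\sigma_h|_{1,h}^2\lesssim(h(|\boldsymbol\sigma|_2+|\boldsymbol p|_1)+\|\boldsymbol g-\boldsymbol g_h\|)^2$ by \eqref{H1bound} and \eqref{interpolation_error_cr}. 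Combining the two terms and using $|\hat{\boldsymbol\sigma}|_2\lesssim\|\boldsymbol\sigma-\boldsymbol\sigma_h\|$ yields \eqref{pre:dualestimate2}. The argument is otherwise routine bookkeeping; the one delicate point is this jump estimate, where one must route through the already-established $H^1$ bound \eqref{H1bound} and the interpolant rather than attempting to bound $\|[\boldsymbol\sigma_h]\|_F$ directly in terms of $\|\boldsymbol\sigma-\boldsymbol\sigma_h\|$, which would generate an uncontrollable factor $h^{-1/2}$. Note that, in contrast with \eqref{pre:dualestimate1}, the auxiliary tensor $\hat{\boldsymbol p}$ does not enter here—only $\hat{\boldsymbol\sigma}=\nabla^2\hat u$, together with $\curl\hat{\boldsymbol\sigma}=0$ and $\|\hat{\boldsymbol\sigma}\|_2\lesssim\|\boldsymbol\sigma-\boldsymbol\sigma_h\|$, is used.
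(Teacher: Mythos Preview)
Your argument is correct, and in the treatment of the volume term it is actually slightly cleaner than the paper's. The paper computes $(\nabla_h(\boldsymbol\sigma-\boldsymbol\sigma_h),\nabla_h(I_h\hat{\boldsymbol\sigma}))$ by integrating $(\nabla\boldsymbol\sigma,\nabla_h(I_h\hat{\boldsymbol\sigma}))$ by parts elementwise against the strong equation $-\Delta\boldsymbol\sigma+\sym\curl\boldsymbol p=\boldsymbol g$, which produces consistency face terms $\sum_T(\partial_n\boldsymbol\sigma+\boldsymbol p\times\boldsymbol n,I_h\hat{\boldsymbol\sigma})_{\partial T}$ and the coupling $(\boldsymbol p_h-\boldsymbol p,\curl_h(I_h\hat{\boldsymbol\sigma}))$; it then adds the piece $(\nabla_h(\boldsymbol\sigma-\boldsymbol\sigma_h),\nabla_h(\hat{\boldsymbol\sigma}-I_h\hat{\boldsymbol\sigma}))$, all of which are bounded via \eqref{H1bound}. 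Your route---test the continuous variational equation \eqref{continue_generalized_stokes1} with $\hat{\boldsymbol\sigma}$ (using $\curl\hat{\boldsymbol\sigma}=0$), test the discrete equation with $I_h\hat{\boldsymbol\sigma}$ (using $\curl_h(I_h\hat{\boldsymbol\sigma})=Q_h\curl\hat{\boldsymbol\sigma}=0$), and then invoke $\int_T\nabla(\hat{\boldsymbol\sigma}-I_h\hat{\boldsymbol\sigma})=0$ with $\nabla_h\boldsymbol\sigma_h$ piecewise constant---collapses all of this to $(\boldsymbol g,\hat{\boldsymbol\sigma})-(\boldsymbol g_h,I_h\hat{\boldsymbol\sigma})$ in one line. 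This trick is specific to $\mathbb P_1$ elements (since it needs $\nabla_h\boldsymbol\sigma_h$ piecewise constant), whereas the paper's computation extends verbatim to higher-order spaces; but in the present linear setting yours avoids the $\boldsymbol p$--consistency terms entirely.

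For the boundary term both proofs follow the same standard pattern; the paper simply records the estimate $-\sum_T(\partial_n\hat{\boldsymbol\sigma},\boldsymbol\sigma-\boldsymbol\sigma_h)_{\partial T}\lesssim h|\hat{\boldsymbol\sigma}|_2|\boldsymbol\sigma-\boldsymbol\sigma_h|_{1,h}$ and then applies \eqref{H1bound} and \eqref{dual_estimate}. Your more explicit split through $I_h\boldsymbol\sigma$ is correct but unnecessary: since $[\boldsymbol\sigma-\boldsymbol\sigma_h]$ has zero face mean one may subtract $Q_{0,F}[\boldsymbol\sigma-\boldsymbol\sigma_h]$ as well as $Q_{0,F}(\partial_n\hat{\boldsymbol\sigma})$ and use a face Poincar\'e plus trace inequality to get directly $\sum_F h_F^{-1}\|[\boldsymbol\sigma-\boldsymbol\sigma_h]\|_F^2\lesssim|\boldsymbol\sigma-\boldsymbol\sigma_h|_{1,h}^2$, which is what the paper uses implicitly.
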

\begin{proof}
Using $ -\Delta \boldsymbol{\sigma} + \sym\curl \boldsymbol{p} = \boldsymbol{g} $ and \eqref{discrete_generalized_stokes_1} with $ \boldsymbol{s} = 0$ and $\boldsymbol{\tau}=I_h\hat{\boldsymbol{\sigma}}$, we have

\begin{equation}\label{dualerror3}
\begin{aligned}
& \quad\;
(\nabla_h ( \boldsymbol{\sigma} - \boldsymbol{\sigma}_h), \nabla_h(I_h\hat{\boldsymbol{\sigma}})) \\
&= ( \nabla \boldsymbol{\sigma}, \nabla_h(I_h\hat{\boldsymbol{\sigma}}))
- ( \nabla_h \boldsymbol{\sigma}_h, \nabla_h(I_h\hat{\boldsymbol{\sigma}}))   \\
&= ( \nabla \boldsymbol{\sigma}, \nabla_h(I_h\hat{\boldsymbol{\sigma}})) + ( \boldsymbol{p}_h, \curl_h(I_h\hat{\boldsymbol{\sigma}})) - (\boldsymbol{g}_h, I_h \hat{\boldsymbol{\sigma}}) \\
&= (\nabla \boldsymbol{\sigma}, \nabla_h(I_h\hat{\boldsymbol{\sigma}}))+ ( \boldsymbol{p}_h, \curl_h(I_h\hat{\boldsymbol{\sigma}}))
+ (\Delta\boldsymbol{\sigma} - \sym\curl\boldsymbol{p} + \boldsymbol{g}-\boldsymbol{g}_h, I_h\hat{\boldsymbol{\sigma}})  \\
&= (\boldsymbol{p}_h - \boldsymbol{p} , \curl_h(I_h \hat{\boldsymbol{\sigma}})) + (\boldsymbol{g}-\boldsymbol{g}_h, I_h \hat{\boldsymbol{\sigma}})
+ \sum_{T \in \mathcal{T}_h}( \partial_n \boldsymbol{\sigma} + \boldsymbol{p} \times \boldsymbol{n} , I_h \hat{\boldsymbol{\sigma}})_{\partial T}.
\end{aligned}
\end{equation}

Using the interpolation estimate \eqref{interpolation_error_cr},
\begin{align*}
& \quad\;
(\boldsymbol{p}_h - \boldsymbol{p}, \curl_h(I_h \hat{\boldsymbol{\sigma}})) + (\boldsymbol{g}-\boldsymbol{g}_h, I_h \hat{\boldsymbol{\sigma}}) \\
& = (\boldsymbol{p}_h - \boldsymbol{p}, \curl_h(I_h \hat{\boldsymbol{\sigma}}-\hat{\boldsymbol{\sigma}})) + (\boldsymbol{g}-\boldsymbol{g}_h, I_h \hat{\boldsymbol{\sigma}}) \\
& \lesssim  h\|\hat{\boldsymbol{\sigma}}\|_2(\|\boldsymbol{p} - \boldsymbol{p}_h\| + h\|\boldsymbol{g}-\boldsymbol{g}_h\|) +\|\boldsymbol{g}-\boldsymbol{g}_h\|_{-1}|\hat{\boldsymbol{\sigma}}|_1.
\end{align*}
By the weak continuity \eqref{eq:weakcontinuityCRtensor} and the interpolation estimate \eqref{interpolation_error_cr},
\begin{equation*}
\sum_{T \in \mathcal{T}_h}( \partial_n \boldsymbol{\sigma} + \boldsymbol{p} \times \boldsymbol{n} , I_h \hat{\boldsymbol{\sigma}})_{\partial T}\lesssim h^2(|\boldsymbol{\sigma}|_2+|\boldsymbol{p}|_1)|\hat{\boldsymbol{\sigma}}|_2.
\end{equation*}
Inserting the last two inequalities into \eqref{dualerror3}, we obtain
\begin{align*}
(\nabla_h ( \boldsymbol{\sigma} - \boldsymbol{\sigma}_h), \nabla_h(I_h\hat{\boldsymbol{\sigma}}))&\lesssim h\|\hat{\boldsymbol{\sigma}}\|_2(\|\boldsymbol{p} - \boldsymbol{p}_h\| + \|\boldsymbol{g}-\boldsymbol{g}_h\| + h(|\boldsymbol{\sigma}|_2+|\boldsymbol{p}|_1)) \\
&\quad+\|\boldsymbol{g}-\boldsymbol{g}_h\|_{-1}|\hat{\boldsymbol{\sigma}}|_1.
\end{align*}
Then we get from the interpolation estimate \eqref{interpolation_error_cr} and \eqref{H1bound} that
\begin{align*}
(\nabla_h ( \boldsymbol{\sigma} - \boldsymbol{\sigma}_h), \nabla_h\hat{\boldsymbol{\sigma}})&=(\nabla_h ( \boldsymbol{\sigma} - \boldsymbol{\sigma}_h), \nabla_h(\hat{\boldsymbol{\sigma}}-I_h\hat{\boldsymbol{\sigma}}))+(\nabla_h ( \boldsymbol{\sigma} - \boldsymbol{\sigma}_h), \nabla_h(I_h\hat{\boldsymbol{\sigma}})) \\
&\lesssim \|\hat{\boldsymbol{\sigma}}\|_2\big(h^2(| \boldsymbol{\sigma} |_2+|\boldsymbol{p} |_1) + h\|\boldsymbol{g}-\boldsymbol{g}_h\|+\|\boldsymbol{g}-\boldsymbol{g}_h\|_{-1}\big).
\end{align*}
This together with \eqref{dual_estimate} gives
\begin{equation*}
(\nabla_h ( \boldsymbol{\sigma} - \boldsymbol{\sigma}_h), \nabla_h\hat{\boldsymbol{\sigma}}) \lesssim   \|\boldsymbol{\sigma} - \boldsymbol{\sigma}_h\|\big(h^2(| \boldsymbol{\sigma} |_2+|\boldsymbol{p} |_1) + h\|\boldsymbol{g}-\boldsymbol{g}_h\|+\|\boldsymbol{g}-\boldsymbol{g}_h\|_{-1}\big).
\end{equation*}
On the other hand, by the weak continuity \eqref{eq:weakcontinuityCRtensor}, \eqref{dual_estimate} and \eqref{H1bound},
\begin{align*}
- \sum_{T \in \mathcal{T}_h}(\partial_n \hat{\boldsymbol{\sigma}}, \boldsymbol{\sigma} - \boldsymbol{\sigma}_h )_{\partial T}
& \lesssim h|\hat{\boldsymbol{\sigma}}|_2|\boldsymbol{\sigma} - \boldsymbol{\sigma}_h|_{1,h} \\
&\lesssim \|\boldsymbol{\sigma} - \boldsymbol{\sigma}_h\|\big(h^2(| \boldsymbol{\sigma} |_2+|\boldsymbol{p} |_1) + h\|\boldsymbol{g}-\boldsymbol{g}_h\|\big).
\end{align*}
Finally, combining the last two inequalities yields \eqref{pre:dualestimate2}.
\end{proof}

\begin{theorem}
Assume that the regularity conditions \eqref{dual_regu}-\eqref{traceless_regu} are satisfied. Under the assumptions of Theorem~\ref{thm_H1bound}, we have
\begin{equation} \label{dual_error}
\| \boldsymbol{\sigma} - \boldsymbol{\sigma}_h \| \lesssim
h^2(| \boldsymbol{\sigma} |_2+|\boldsymbol{p} |_1) + h\|\boldsymbol{g}-\boldsymbol{g}_h\| + \|\boldsymbol{g}-\boldsymbol{g}_h\|_{-1}.
\end{equation}
\end{theorem}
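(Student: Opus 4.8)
The plan is a standard Aubin--Nitsche duality argument that has essentially been prepared by the preceding three lemmas, so the remaining work is to assemble them. First I would invoke the lemma producing $\hat{\boldsymbol{\sigma}}\in H_0^1(\Omega;\mathbb S)\cap H^2(\Omega;\mathbb S)$ and $\hat{\boldsymbol{p}}\in H_0(\div,\Omega;\mathbb T)\cap H^1(\Omega;\mathbb T)$ with
\[
-\Delta\hat{\boldsymbol{\sigma}}+\sym\curl\hat{\boldsymbol{p}}=\boldsymbol{\sigma}-\boldsymbol{\sigma}_h,\qquad \|\hat{\boldsymbol{\sigma}}\|_2+\|\hat{\boldsymbol{p}}\|_1\lesssim\|\boldsymbol{\sigma}-\boldsymbol{\sigma}_h\|,
\]
which is the representation of the dual solution tailored to the generalized tensor-valued Stokes structure.

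Next I would test this identity against $\boldsymbol{\sigma}-\boldsymbol{\sigma}_h$ in the $L^2(\Omega;\mathbb S)$ inner product to obtain the exact splitting
\[
\|\boldsymbol{\sigma}-\boldsymbol{\sigma}_h\|^2=(\boldsymbol{\sigma}-\boldsymbol{\sigma}_h,-\Delta\hat{\boldsymbol{\sigma}})+(\boldsymbol{\sigma}-\boldsymbol{\sigma}_h,\sym\curl\hat{\boldsymbol{p}}).
\]
The two terms on the right are precisely the quantities bounded in estimates \eqref{pre:dualestimate2} and \eqref{pre:dualestimate1}, respectively. Substituting those bounds yields
\[
\|\boldsymbol{\sigma}-\boldsymbol{\sigma}_h\|^2\lesssim\|\boldsymbol{\sigma}-\boldsymbol{\sigma}_h\|\big(h^2(|\boldsymbol{\sigma}|_2+|\boldsymbol{p}|_1)+h\|\boldsymbol{g}-\boldsymbol{g}_h\|+\|\boldsymbol{g}-\boldsymbol{g}_h\|_{-1}\big).
\]

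Finally I would cancel one factor of $\|\boldsymbol{\sigma}-\boldsymbol{\sigma}_h\|$ (trivially, when the error vanishes the estimate is immediate) to conclude \eqref{dual_error}. Since every analytic difficulty---the nonconforming consistency terms handled by the Crouzeix--Raviart weak continuities \eqref{eq:weakcontinuityCRvector}--\eqref{eq:weakcontinuityCRtensor}, the interpolation estimates \eqref{interpolation_error_cr}, the regularity pickup \eqref{dual_regu}--\eqref{traceless_regu}, and the $H^1$-type bound \eqref{H1bound}---has already been absorbed into the two preceding lemmas, the only step left is this bookkeeping. Consequently there is no real obstacle at this stage; the one point requiring care is merely to make sure the $\|\boldsymbol{g}-\boldsymbol{g}_h\|_{-1}$ contribution, which arises only in \eqref{pre:dualestimate2} and not in \eqref{pre:dualestimate1}, is carried through to the final bound.
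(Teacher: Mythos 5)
Your proposal is correct and matches the paper's proof exactly: both start from $\|\boldsymbol{\sigma}-\boldsymbol{\sigma}_h\|^2=(\boldsymbol{\sigma}-\boldsymbol{\sigma}_h,-\Delta\hat{\boldsymbol{\sigma}}+\sym\curl\hat{\boldsymbol{p}})$ via \eqref{dual_replace}, apply \eqref{pre:dualestimate1} and \eqref{pre:dualestimate2}, and cancel one factor of $\|\boldsymbol{\sigma}-\boldsymbol{\sigma}_h\|$.
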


\begin{proof}
Using \eqref{dual_replace}, \eqref{pre:dualestimate1} and \eqref{pre:dualestimate2}, we have
\begin{align*}
\| \boldsymbol{\sigma} - \boldsymbol{\sigma}_h \|^2 &= ( \boldsymbol{\sigma} - \boldsymbol{\sigma}_h , -\Delta \hat{\boldsymbol{\sigma}} + \sym\curl \hat{\boldsymbol{p}}) \\
&\lesssim  \|\boldsymbol{\sigma} - \boldsymbol{\sigma}_h\|\big(h^2(| \boldsymbol{\sigma} |_2+|\boldsymbol{p} |_1) + h\|\boldsymbol{g}-\boldsymbol{g}_h\|+\|\boldsymbol{g}-\boldsymbol{g}_h\|_{-1}\big).
\end{align*}
Therefore, \eqref{dual_error} holds.
\end{proof}

\section{Decoupled Finite Element Method for Triharmonic Equation}\label{section4}

In this section, we develop and analyze a low-order decoupled finite element method for the three-dimensional triharmonic equation. We first decouple the triharmonic equation into two biharmonic equations and one generalized tensor-valued Stokes equation \eqref{continue_generalized_stokes}. This decoupled formulation facilitates the construction of efficient finite element methods and the design of fast solvers.
The two biharmonic equations are then discretized using the Morley--Wang--Xu element \cite{morley2006}, while the generalized tensor-valued Stokes equation is approximated by the nonconforming linear element method \eqref{discrete_generalized_stokes}.

\subsection{Decoupled formulation}
The primal formulation of the triharmonic equation~\eqref{triharmonic} reads: Find $u \in H_0^3(\Omega) $ such that
\begin{equation}\label{triharmonic_primal}
(\nabla^{3} u, \nabla^{3} v)=(f, v) ,\quad \forall\,v \in H_{0}^{3}(\Omega).
\end{equation}

To derive a decoupled formulation for the primal formulation \eqref{triharmonic_primal}, we recall the following divdiv complex in three dimensions \cite[(35)]{ArnoldHu2021}:
\begin{equation*}
\textrm{RT} \xrightarrow{\subset} H^1(\Omega; \mathbb{R}^3) \xrightarrow{\dev\grad} L^2(\Omega; \mathbb{T}) \xrightarrow{\sym\curl} H^{-1}(\Omega; \mathbb{S}) \xrightarrow{\div\div} H^{-3}(\Omega) \xrightarrow{ \, \, } 0.
\end{equation*}
Applying the tilde operation in \cite[Section 2.3]{ChenHuang2025} to the last divdiv complex yields the following exact divdiv complex
\begin{equation}\label{complex_div_negative}
\begin{aligned}
\textrm{RT} \xrightarrow{\subset} H^1(\Omega; \mathbb{R}^3) \xrightarrow{\dev\grad} & L^2(\Omega; \mathbb{T}) \xrightarrow{\sym\curl} \\
&H^{-2}(\div\div, \Omega; \mathbb{S}) \xrightarrow{\div\div}  H^{-2}(\Omega) \xrightarrow{ \, \, } 0,
\end{aligned}
\end{equation}
where
\begin{equation*}
H^{-2}(\div\div, \Omega; \mathbb{S}) := \{\boldsymbol{\tau} \in H^{-1}(\Omega; \mathbb{S})
: \div\div\boldsymbol{\tau} \in H^{-2}(\Omega) \}.
\end{equation*}

Using the divdiv complex \eqref{complex_div_negative} above, we construct the following commutative diagram
\begin{equation}\label{cd}
\begin{array}{c}
\xymatrix{
H_{0}^{1}( \Omega; \mathbb{S} ) \ar[r]^-{\Delta} &  H^{-1}(\Omega;\mathbb{S} )
\\
L^{2}(\Omega;\mathbb{T} ) \ar[r]^- { \sym\curl } &  H^{-2}(\div\div ,\Omega;\mathbb S) \ar@{}[u]|{\bigcup}
\ar[r] ^-{  \div\div } & H^{-2}(\Omega)  \ar[r]^-{} & 0. \\
& \ar[u]^{\boldsymbol{I}} H_{0}(\textrm{curl}, \Omega; \mathbb{S}) & \ar[l]_-{\nabla^2 } H_{0}^{2}(\Omega)
\ar[u]_{\Delta^2}
}
\end{array}
\end{equation}
Then, by applying the framework in \cite{ChenHuang2018} to the commutative diagram \eqref{cd}, we obtain the Helmholtz decomposition
\begin{equation*}
H^{-2}(\div\div, \Omega;\mathbb S) =  \nabla^2 H_0^2(\Omega)
\oplus \sym\curl ( L^{2}(\Omega;\mathbb{T} )/ \dev\grad H^{1}(\Omega; \mathbb{R}^3) ),
\end{equation*}
and decouple the triharmonic equation \eqref{triharmonic_primal} into the following three equations: Find $(w, \boldsymbol{\sigma}, \boldsymbol{p}, \boldsymbol{r} , u )  \in H_0^2(\Omega) \times H_0^1(\Omega; \mathbb{S}) \times L^2(\Omega; \mathbb{T}) \times (H^1(\Omega; \mathbb{R}^3) / \textrm{RT}) \times H_0^2(\Omega) $ such that
\begin{subequations}\label{continue}
\begin{align}
( \nabla^2 w, \nabla^2 v ) & = (f, v ),  &&  \label{continue_1} \\
( \nabla \boldsymbol{\sigma}, \nabla \boldsymbol{\tau} ) + ( \curl \boldsymbol{\tau} + \dev\grad \boldsymbol{s}  , \boldsymbol{p} ) & = ( \nabla^2 w, \boldsymbol{\tau} ), &&  \label{continue_2} \\
\left(\curl \boldsymbol{\sigma} + \dev\grad \boldsymbol{r}, \boldsymbol{q} \right) & = 0, && \label{continue_3} \\
( \nabla^2 u, \nabla^2 \chi) & = ( \boldsymbol{\sigma} , \nabla^2 \chi) , && \label{continue_4}
\end{align}
\end{subequations}
for any $(v, \boldsymbol{\tau}, \boldsymbol{q}, \boldsymbol{s}, \chi)  \in H_0^2(\Omega) \times H_0^1(\Omega; \mathbb{S}) \times L^2(\Omega; \mathbb{T}) \times (H^1(\Omega; \mathbb{R}^3) / \textrm{RT}) \times H_0^2(\Omega)$.

Problems \eqref{continue_1} and \eqref{continue_4} are the weak formulations of the biharmonic equations, which are evidently well-posed.
By Theorem~\ref{gStokes:continue_wellposed}, the generalized tensor-valued Stokes equation \eqref{continue_2}-\eqref{continue_3} is also well-posed.
Next, we show the equivalence between the decoupled formulation \eqref{continue} and the primal formulation \eqref{triharmonic_primal}.

\begin{theorem}\label{theorem_equivalence}
The decoupled formulation \eqref{continue} is equivalent to the primal formulation \eqref{triharmonic_primal}. That is, if $w \in H_0^2(\Omega)$ is the solution of problem \eqref{continue_1}, $(\boldsymbol{\sigma}, \boldsymbol{p}, \boldsymbol{r}) \in H_0^1(\Omega; \mathbb{S}) \times L^2(\Omega; \mathbb{T}) \times (H^1(\Omega; \mathbb{R}^3)/\textnormal{RT}) $ is the solution of problem \eqref{continue_2}-\eqref{continue_3}, and $u \in H_0^2(\Omega)$ is the solution of problem \eqref{continue_4}, then $\boldsymbol{r} = 0$, $\div\boldsymbol{p} = 0$, $\boldsymbol{\sigma} = \nabla^2 u $, $\boldsymbol{p} \in H_0(\div, \Omega;\mathbb T)$, and $ u \in H_0^3(\Omega) $ satisfies the primal formulation \eqref{triharmonic_primal}.
\end{theorem}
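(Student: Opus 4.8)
The plan is to chain together the well-posedness statements already proved and to extract a scalar potential for $\boldsymbol{\sigma}$ from the exactness of the Hessian complex~\eqref{hessiancomplex}; that potential will turn out to be exactly $u$. Throughout, $w$, $(\boldsymbol{\sigma},\boldsymbol{p},\boldsymbol{r})$ and $u$ denote the solutions of \eqref{continue_1}, \eqref{continue_2}--\eqref{continue_3} and \eqref{continue_4}, respectively.

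First I would note that $\nabla^2 w\in L^2(\Omega;\mathbb{S})$, so \eqref{continue_2}--\eqref{continue_3} is precisely the weak generalized tensor-valued Stokes problem \eqref{continue_generalized_stokes} with data $\boldsymbol{g}=\nabla^2 w$. Theorem~\ref{gStokes:continue_wellposed} then immediately gives $\boldsymbol{r}=0$, $\curl\boldsymbol{\sigma}=0$, $\div\boldsymbol{p}=0$ and $\boldsymbol{p}\in H_0(\div,\Omega;\mathbb{T})$. Since $\boldsymbol{\sigma}\in H_0^1(\Omega;\mathbb{S})$ lies in $\ker(\curl)$, the exactness of \eqref{hessiancomplex}, that is $H_0^1(\Omega;\mathbb{S})\cap\ker(\curl)=\nabla^2 H_0^3(\Omega)$, supplies $\phi\in H_0^3(\Omega)$ with $\boldsymbol{\sigma}=\nabla^2\phi$. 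Substituting into \eqref{continue_4} and using $\phi\in H_0^3(\Omega)\subset H_0^2(\Omega)$ yields $(\nabla^2(u-\phi),\nabla^2\chi)=0$ for all $\chi\in H_0^2(\Omega)$; taking $\chi=u-\phi$ forces $\nabla^2(u-\phi)=0$, so $u-\phi$ is affine, and the homogeneous boundary conditions give $u=\phi$. Hence $\boldsymbol{\sigma}=\nabla^2 u$ and $u\in H_0^3(\Omega)$.

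Next I would verify that $u$ satisfies the primal formulation \eqref{triharmonic_primal}. For any $v\in H_0^3(\Omega)$, one has $\nabla^2 v\in H_0^1(\Omega;\mathbb{S})$ and $\curl\nabla^2 v=0$ by exactness of \eqref{hessiancomplex}. Testing \eqref{continue_2} with $\boldsymbol{\tau}=\nabla^2 v$ and $\boldsymbol{s}=0$ then annihilates the $\boldsymbol{p}$-term, and using $\boldsymbol{\sigma}=\nabla^2 u$ gives $(\nabla^3 u,\nabla^3 v)=(\nabla^2 w,\nabla^2 v)$. Since $v\in H_0^2(\Omega)$ as well, \eqref{continue_1} yields $(\nabla^2 w,\nabla^2 v)=(f,v)$, so $(\nabla^3 u,\nabla^3 v)=(f,v)$ for all $v\in H_0^3(\Omega)$. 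The reverse implication, and hence the asserted equivalence, follows from uniqueness: problems \eqref{continue_1}, \eqref{continue_2}--\eqref{continue_3}, \eqref{continue_4} and \eqref{triharmonic_primal} are each uniquely solvable, so the $u$-component of the (unique) solution of \eqref{continue} coincides with the (unique) solution of \eqref{triharmonic_primal}.

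I do not expect a substantial obstacle here: the argument is essentially bookkeeping across results already established. The step needing the most care is the potential recovery, where one must invoke the \emph{exact} Hessian complex \emph{with homogeneous boundary conditions}, so that $\phi$ belongs to $H_0^3(\Omega)$ rather than merely $H^3(\Omega)$; only then does the uniqueness argument for the biharmonic problem \eqref{continue_4} identify $\phi=u$ and certify $u\in H_0^3(\Omega)$.
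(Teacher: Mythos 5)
Your proposal is correct and follows essentially the same route as the paper: Helmholtz/Hessian-complex exactness to get $\boldsymbol{r}=0$, $\curl\boldsymbol{\sigma}=0$, and a potential $\phi\in H_0^3(\Omega)$ with $\boldsymbol{\sigma}=\nabla^2\phi$, then identifying $\phi=u$ via the uniqueness of \eqref{continue_4}, and testing \eqref{continue_2} with $\boldsymbol{\tau}=\nabla^2 v$, $\boldsymbol{s}=0$ to reach \eqref{triharmonic_primal}. The only cosmetic difference is that you pull $\boldsymbol{r}=0$, $\div\boldsymbol{p}=0$, $\boldsymbol{p}\in H_0(\div,\Omega;\mathbb{T})$ wholesale from Theorem~\ref{gStokes:continue_wellposed}, whereas the paper re-derives them in place from \eqref{continue_helm} and by taking $\boldsymbol{\tau}=0$ in \eqref{continue_2}.
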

\begin{proof}
%
Combining equation \eqref{continue_3} with the Helmholtz decomposition \eqref{continue_helm} yields $ \boldsymbol{r} = 0 $ and $\curl\boldsymbol{\sigma} = 0$. By the Hessian complex \cite{ChenHuang2022,ArnoldHu2021,PaulyZulehner2020}, there exists $\tilde{u} \in H_0^3(\Omega)$ such that $ \boldsymbol{\sigma}  = \nabla^2 \tilde{u} \in H_0^1(\Omega; \mathbb{S})$.  Then, by \eqref{continue_4}, we have $u = \tilde{u} \in H_0^3(\Omega)$ and $\boldsymbol{\sigma} = \nabla^2 u$.
Next, taking $\boldsymbol{s} = 0$ and $\boldsymbol{\tau} = \nabla^2 v$ with $v \in H_0^3(\Omega)$ in \eqref{continue_2}, we obtain
\begin{equation*}
( \nabla^3 u, \nabla^3 v) = ( \nabla^2 w, \nabla^2 v ), \qquad\forall\,v \in H_0^3(\Omega) .
\end{equation*}
Together with \eqref{continue_1}, this shows that $u \in H_0^3(\Omega)$ satisfies the primal formulation~\eqref{triharmonic_primal}. Finally, choosing $\boldsymbol{\tau} = 0$ in \eqref{continue_2} gives $\boldsymbol{p} \in H_0(\div, \Omega;\mathbb T)$ and $\div \boldsymbol{p} = 0$.
\end{proof}

The decoupled formulation \eqref{continue} is different from the one presented in \cite[Section 5.1]{Gallistl2017}.

\subsection{Morley-Wang-Xu element}
For the three-dimensional biharmonic equations \eqref{continue_1} and \eqref{continue_4}, various numerical approaches are available, including conforming element methods \cite{HuLinWu2024,ChenChenGaoHuangEtAl2025,ChenHuang2021,ChenHuang2024,ChenHuangWei2022}, nonconforming element methods \cite{morley2006,ChenHuang2020}, mixed finite element methods \cite{ChenHuang2025a,HuangTang2025}, and decoupling methods \cite{CuiHuang2025,CuiHuang2024,Cui2025,Gallistl2017,Zhang2018a}.
In this paper, we employ the nonconforming Morley-Wang-Xu element method \cite{morley2006}, whose equivalence to the $H(\div\div)$-conforming mixed element method was established in our recent work \cite{ChenHuang2025a}. The shape function space for the Morley-Wang-Xu element is $\mathbb{P}_2(T)$, and the DoFs are given by
\begin{subequations}\label{dof_w}
\begin{align}
\int_{F} \partial_n w \, \text{d} S, \quad & \forall\,F \in \Delta_2(T) , \label{dof_w_1} \\
\int_{e} w \, \text{d} s, \quad & \forall\,e \in \Delta_1(T).  \label{dof_w_2}
\end{align}
\end{subequations}
The global $H^2$-nonconforming finite element space is defined as
\begin{equation*}
\mathring{W}_h = \{ w_h \in W_h: \text{ DoFs \eqref{dof_w} vanish on the boundary } \partial\Omega \},
\end{equation*}
where
\begin{equation*}
W_h = \{ w_h\in\mathbb{P}_2(\mathcal{T}_h): \text{ DoFs \eqref{dof_w} are single-valued}    \}.
\end{equation*}
The Morley-Wang-Xu space $\mathring{W}_h$ satisfies the following weak continuity property:
\begin{equation}
\label{eq:weakcontinuityMWX}
\int_F [\nabla_hw_h]\,\text{d}S = 0, \quad \forall\,w_h \in \mathring{W}_h, \, F \in \Delta_2(\mathcal{T}_h).
\end{equation}
This implies $\nabla_h \mathring{W}_h\subseteq \mathring{V}_h^{\text{CR}}\otimes \mathbb{R}^3$.


\subsection{Decoupled finite element method}
With all ingredients prepared, we now proceed to construct the following decoupled nonconforming finite element method for the three-dimensional triharmonic equation \eqref{triharmonic_primal} based on the decoupled formulation \eqref{continue}: Find
$(w_h, \boldsymbol{\sigma}_h, \boldsymbol{p}_h, \boldsymbol{r}_h, u_h) \in \mathring{W}_h \times \mathring{V}_h^{\mathbb{S}} \times \mathbb{P}_0(\mathcal{T}_h; \mathbb{T}) \times  V_h \times \mathring{W}_h$ such that
\begin{subequations}\label{decoupledFEM}
\begin{align}
\left( \nabla^2_h w_h, \nabla^2_h v \right) & = \left(f, v \right)  && \forall\,v \in \mathring{W}_h, \label{discrete_1} \\
a_h(\boldsymbol{\sigma}_h, \boldsymbol{r}_h; \boldsymbol{\tau}, \boldsymbol{s}) + b_h(\boldsymbol{\tau}, \boldsymbol{s}; \boldsymbol{p}_h ) & = \left( \nabla_h^2 w_h, \boldsymbol{\tau} \right) && \forall\,\boldsymbol{\tau} \in \mathring{V}_h^{\mathbb{S}}, \boldsymbol{s} \in V_h, \label{discrete_2}        \\
b_h(\boldsymbol{\sigma}_h, \boldsymbol{r}_h; \boldsymbol{q} ) & = 0 && \forall\,\boldsymbol{q} \in \mathbb{P}_0(\mathcal{T}_h; \mathbb{T}), \label{discrete_3} \\
\left( \nabla_h^2 u_h, \nabla_h^2 \chi \right) & = \left( \boldsymbol{\sigma}_h , \nabla_h^2 \chi \right) && \forall\,\chi \in  \mathring{W}_h. \label{discrete_4}
\end{align}
\end{subequations}
That is, we use the Morley-Wang-Xu element method to discretize the biharmonic equations \eqref{continue_1} and \eqref{continue_4}, and the nonconforming linear element method \eqref{discrete_generalized_stokes} to discretize the generalized tensor-valued Stokes equation \eqref{continue_2}-\eqref{continue_3}.

The well-posedness of the Morley-Wang-Xu element methods \eqref{discrete_1} and \eqref{discrete_4} is well established. Moreover, by Theorem~\ref{thm:generalized_stokesfemwellposed}, the nonconforming linear element method \eqref{discrete_2}-\eqref{discrete_3} is also well-posed. Consequently, the decoupled finite element method \eqref{decoupledFEM} is well-posed. In the following, we present the error analysis for this decoupled method.

We first present error estimates for the Morley-Wang-Xu element method \eqref{discrete_1}.
\begin{lemma}
Let $w \in H_0^2(\Omega) $ be the solution of problem \eqref{continue_1}, and $w_h \in \mathring{W}_h $ be the solution of the discrete problem \eqref{discrete_1}. Assume $ w \in H^3(\Omega)$. We have
\begin{equation}\label{morley_error_1}
|w - w_h |_{2,h} \lesssim h | w |_3 + h^2 \|f \|.
\end{equation}
Furthermore, if the biharmonic equation has $H^3$ regularity, then
\begin{equation}\label{morley_error_2}
\|\nabla_h^2(w - w_h)\|_{-1} \lesssim h^2 | w |_3 + h^3 \|f \|.
\end{equation}
\end{lemma}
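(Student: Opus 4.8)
The plan is to follow the classical nonconforming error analysis for the Morley-Wang-Xu element: an enriched Strang (``medius'') argument for the energy estimate \eqref{morley_error_1}, and an Aubin-Nitsche duality argument for the negative-norm estimate \eqref{morley_error_2}. I would fix the Morley-Wang-Xu nodal interpolation $I_h^{\mathrm{MWX}}\colon H^3(\Omega)\to W_h$ from the degrees of freedom \eqref{dof_w}, with the usual bounds $|v-I_h^{\mathrm{MWX}}v|_{j,h}\lesssim h^{3-j}|v|_3$ for $j=0,1,2$, and $I_h^{\mathrm{MWX}}v\in\mathring{W}_h$ for $v\in H_0^2(\Omega)$. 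A structural point I will use repeatedly is that \eqref{dof_w_1}--\eqref{dof_w_2} fix the face integrals of the gradient, so $\int_T\nabla^2 w_h\dx=\int_{\partial T}\nabla w_h\otimes\boldsymbol{n}\,\text{d}S$ is single-valued on $\mathring{W}_h$ and reproduced by $I_h^{\mathrm{MWX}}$, i.e. $\int_T\nabla^2(v-I_h^{\mathrm{MWX}}v)\dx=0$ on each $T$. Finally I invoke a bounded enriching operator $E_h\colon\mathring{W}_h\to H_0^2(\Omega)$ reproducing these face gradient-integrals, hence with $\int_T\nabla^2(w_h-E_hw_h)\dx=0$ and $\sum_T\big(h_T^{-4}\|w_h-E_hw_h\|_{0,T}^2+|w_h-E_hw_h|_{2,T}^2\big)\lesssim|w_h|_{2,h}^2$.

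For \eqref{morley_error_1}, put $v_h:=w_h-I_h^{\mathrm{MWX}}w\in\mathring{W}_h$. Testing \eqref{discrete_1} with $v_h$ and inserting $(\nabla_h^2 w,\nabla_h^2 v_h)$ gives
\begin{equation*}
|v_h|_{2,h}^2=\big(\nabla_h^2(w-I_h^{\mathrm{MWX}}w),\nabla_h^2 v_h\big)+\big[(f,v_h)-(\nabla_h^2 w,\nabla_h^2 v_h)\big],
\end{equation*}
where the first term is $\lesssim h|w|_3\,|v_h|_{2,h}$ by the interpolation bound. For the consistency bracket I would use $E_hv_h\in H_0^2(\Omega)$ and the weak equation \eqref{continue_1} to rewrite it as $(f,v_h-E_hv_h)-\big(\nabla_h^2 w,\nabla_h^2(v_h-E_hv_h)\big)$; the first part is $\le\|f\|\,\|v_h-E_hv_h\|\lesssim h^2\|f\|\,|v_h|_{2,h}$, which is exactly where the $h^2\|f\|$ term originates, while for the second part I subtract from $\nabla^2 w$ its elementwise mean and use $\int_T\nabla^2(v_h-E_hv_h)\dx=0$ to obtain $\sum_T\int_T(\nabla^2 w-\overline{\nabla^2 w}^T):\nabla_h^2(v_h-E_hv_h)\lesssim\sum_T h_T|w|_{3,T}|v_h-E_hv_h|_{2,T}\lesssim h|w|_3\,|v_h|_{2,h}$. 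Dividing by $|v_h|_{2,h}$ and applying the triangle inequality with $|w-I_h^{\mathrm{MWX}}w|_{2,h}\lesssim h|w|_3$ gives \eqref{morley_error_1}.

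For \eqref{morley_error_2}, I would run Aubin-Nitsche. Given $\boldsymbol\phi\in H_0^1(\Omega;\mathbb{S})$ with $|\boldsymbol\phi|_1\le1$, let $z\in H_0^2(\Omega)$ solve $(\nabla^2 z,\nabla^2\chi)=(\boldsymbol\phi,\nabla^2\chi)$ for all $\chi\in H_0^2(\Omega)$; the assumed $H^3$ regularity of the biharmonic problem gives $\|z\|_3\lesssim|\boldsymbol\phi|_1\lesssim1$. Using $w,z\in H_0^2(\Omega)$, the weak equation \eqref{continue_1}, and \eqref{discrete_1} tested at $I_h^{\mathrm{MWX}}z\in\mathring{W}_h$, I would reduce $(\nabla_h^2(w-w_h),\boldsymbol\phi)$ to a sum of terms each being a product of two small factors: $(f,z-I_h^{\mathrm{MWX}}z)=O(h^3\|f\|)$; the term $\big(\nabla_h^2(w-w_h),\nabla_h^2(z-I_h^{\mathrm{MWX}}z)\big)\le|w-w_h|_{2,h}|z-I_h^{\mathrm{MWX}}z|_{2,h}=O\big((h|w|_3+h^2\|f\|)h\|z\|_3\big)$ by \eqref{morley_error_1}; and cross terms of the type $\big(\nabla^2 w,\nabla_h^2(z-I_h^{\mathrm{MWX}}z)\big)$ and $\big(\nabla^2 z,\nabla_h^2(w-w_h)\big)$, which are $O(h^2|w|_3\|z\|_3)$ and are handled exactly as the second part of the consistency bracket above, using $\int_T\nabla^2(z-I_h^{\mathrm{MWX}}z)\dx=0$ and, after inserting $I_h^{\mathrm{MWX}}w$, $\int_T\nabla^2(w-I_h^{\mathrm{MWX}}w)\dx=0$. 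Taking the supremum over $\boldsymbol\phi$ yields \eqref{morley_error_2}.

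I expect the main obstacle to be the consistency analysis under the hypothesis $w\in H^3(\Omega)$ alone: elementwise integration by parts produces normal traces of third derivatives of $w$, which are not well defined on faces, so one cannot simply estimate the boundary contributions by using $\Delta^2 w=f$ directly; the remedy is to pass through the enriching operator $E_h$ together with the Hessian-mean--preserving property of the Morley-Wang-Xu degrees of freedom, and it is precisely the absence of an $H^4$ assumption that forces the $\|f\|$-terms to appear in \eqref{morley_error_1}--\eqref{morley_error_2} in place of higher-order seminorms of $w$.
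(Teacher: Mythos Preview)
Your argument for \eqref{morley_error_1} via the enriching operator $E_h$ is correct and standard; the paper simply cites \cite{morley2006} here.

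For \eqref{morley_error_2}, you and the paper take genuinely different routes. The paper does not run a duality argument on $\|\nabla_h^2(w-w_h)\|_{-1}$ directly. Instead, for $\boldsymbol{\tau}\in H_0^1(\Omega;\mathbb S)$ it integrates by parts once,
\[
(\nabla_h^2(w-w_h),\boldsymbol{\tau}) = -(\nabla_h(w-w_h),\div\boldsymbol{\tau}) + \sum_{T\in\mathcal{T}_h}(\nabla_h(w-w_h),\boldsymbol{\tau}\boldsymbol{n})_{\partial T},
\]
uses the weak continuity \eqref{eq:weakcontinuityMWX} to bound the boundary sum by $h|w-w_h|_{2,h}|\boldsymbol{\tau}|_1$, and obtains $\|\nabla_h^2(w-w_h)\|_{-1}\lesssim |w-w_h|_{1,h}+h|w-w_h|_{2,h}$. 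It then quotes the $H^1$-seminorm duality estimate $|w-w_h|_{1,h}\lesssim h^2|w|_3+h^3\|f\|$ from \cite{Shi1990}. This is a two-line reduction to a known result, whereas your direct Aubin--Nitsche argument is self-contained but longer.

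Your sketch for \eqref{morley_error_2}, however, is not quite right as written. The decomposition you obtain from \eqref{continue_1}, \eqref{discrete_1}, and the dual equation has as its remaining term $(\nabla_h^2(w-w_h),\boldsymbol{\phi}-\nabla^2 z)$, not $(\nabla^2 z,\nabla_h^2(w-w_h))$; the latter is essentially the whole quantity again and is certainly not $O(h^2|w|_3\|z\|_3)$ by mean-zero arguments alone. For the correct term, your prescription ``insert $I_h^{\mathrm{MWX}}w$ and use the Hessian-mean-preserving property'' handles the piece $(\nabla_h^2(w-I_h^{\mathrm{MWX}}w),\boldsymbol{\phi}-\nabla^2 z)$ but leaves $(\nabla_h^2(I_h^{\mathrm{MWX}}w-w_h),\boldsymbol{\phi}-\nabla^2 z)$ untreated: here $I_h^{\mathrm{MWX}}w-w_h\in\mathring{W}_h$ and neither factor has elementwise mean zero. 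You need $E_h$ once more, replacing $I_h^{\mathrm{MWX}}w-w_h$ by its difference with $E_h(I_h^{\mathrm{MWX}}w-w_h)$ (which does have vanishing elementwise Hessian mean) and killing the $E_h$-part with the dual equation. With this repair your route does yield \eqref{morley_error_2}.
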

\begin{proof}
The estimate \eqref{morley_error_1} was proved in \cite{morley2006}.
We then focus on the proof of estimate \eqref{morley_error_2}.
For any $\boldsymbol{\tau}\in H_0^1(\Omega;\mathbb S)$,
\begin{equation*}
(\nabla_h^2(w - w_h), \boldsymbol{\tau})=-(\nabla_h(w - w_h), \div\boldsymbol{\tau}) + \sum_{T\in\mathcal{T}_h}(\nabla_h(w - w_h), \boldsymbol{\tau}\boldsymbol{n})_{\partial T}.
\end{equation*}
By the weak continuity \eqref{eq:weakcontinuityMWX}, we can derive
\begin{equation*}
\sum_{T\in\mathcal{T}_h}(\nabla_h(w - w_h), \boldsymbol{\tau}\boldsymbol{n})_{\partial T}\lesssim h|w - w_h|_{2,h}|\boldsymbol{\tau}|_1.
\end{equation*}
Consequently,
\begin{equation*}
\|\nabla_h^2(w - w_h)\|_{-1} =\sup_{\boldsymbol{\tau}\in H_0^1(\Omega;\mathbb S)}\frac{(\nabla_h^2(w - w_h), \boldsymbol{\tau})}{\|\boldsymbol{\tau}\|_1}\lesssim |w - w_h|_{1,h} + h|w - w_h|_{2,h}.
\end{equation*}
On the other hand, applying the duality argument as in \cite{Shi1990} yields
\begin{equation}\label{morley_error_12}
| w - w_h |_{1,h} \lesssim h^2 | w |_3 + h^3 \|f \|.
\end{equation}
Therefore, the estimate \eqref{morley_error_2} follows directly from \eqref{morley_error_1} and \eqref{morley_error_12}.
\end{proof}

Next, we establish error estimates for $ \boldsymbol{\sigma}_h$, $\boldsymbol{p}_h$ and $\boldsymbol{r}_h$.
\begin{theorem}\label{lemma_4_2}
Let $\left( w, \boldsymbol{\sigma}, \boldsymbol{p}, 0 \right) \in H_0^2(\Omega) \times H_0^1(\Omega; \mathbb{S} ) \times L^2(\Omega; \mathbb{T}) \times H^1(\Omega; \mathbb{R}^3)$ be the solution of problem \eqref{continue_1}-\eqref{continue_3}, and $ \left( w_h, \boldsymbol{\sigma}_h, \boldsymbol{p}_h, \boldsymbol{r}_h \right) \in \mathring{W}_h \times \mathring{V}_h^{\mathbb{S}} \times \mathbb{P}_0(\mathcal{T}_h; \mathbb{T}) \times V_h $ be the solution of the discrete method \eqref{discrete_1}-\eqref{discrete_3}. Assume that $w\in H^3(\Omega)$, $\boldsymbol{\sigma} \in H^2(\Omega; \mathbb{S})$ and $\boldsymbol{p} \in H^1(\Omega; \mathbb{T})$. Then
\begin{equation}\label{H1bound_sec4}
|\boldsymbol{\sigma} - \boldsymbol{\sigma}_h |_{1,h} + \| \boldsymbol{p} - \boldsymbol{p}_h \|  + \interleave\boldsymbol{r}_h\interleave_{1, h}
\lesssim h(| \boldsymbol{\sigma} |_2+| \boldsymbol{p} |_1 + | w |_3 + h \|f \|).
\end{equation}
Furthermore, if the regularity conditions \eqref{dual_regu}-\eqref{traceless_regu} are satisfied and the biharmonic equation admits $H^3$ regularity, we have
\begin{equation} \label{dual_error_sec4}
\| \boldsymbol{\sigma} - \boldsymbol{\sigma}_h \| \lesssim
h^2 (| \boldsymbol{\sigma} |_2+| \boldsymbol{p} |_1 + | w |_3 + h \|f \|).
\end{equation}
\end{theorem}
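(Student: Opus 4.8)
The plan is to recognize that the discrete generalized tensor-valued Stokes subproblem \eqref{discrete_2}--\eqref{discrete_3} is precisely an instance of the abstract nonconforming linear element method \eqref{discrete_generalized_stokes} with exact right-hand side $\boldsymbol{g} = \nabla^2 w$ and approximate right-hand side $\boldsymbol{g}_h = \nabla_h^2 w_h$, and likewise that \eqref{continue_2}--\eqref{continue_3} coincides with \eqref{continue_generalized_stokes} for the same $\boldsymbol{g} = \nabla^2 w$. Since $w\in H_0^2(\Omega)$ we have $\boldsymbol{g}=\nabla^2 w\in L^2(\Omega;\mathbb{S})$, and since $w_h\in\mathbb{P}_2(\mathcal{T}_h)$ we have $\boldsymbol{g}_h=\nabla_h^2 w_h\in L^2(\Omega;\mathbb{S})$, so all hypotheses of Theorem~\ref{thm_H1bound} are met with these data. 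Theorem~\ref{thm_H1bound} then gives directly
\begin{equation*}
|\boldsymbol{\sigma} - \boldsymbol{\sigma}_h |_{1,h} + \| \boldsymbol{p} - \boldsymbol{p}_h \| + \interleave\boldsymbol{r}_h\interleave_{1, h}
\lesssim h(| \boldsymbol{\sigma} |_2+| \boldsymbol{p} |_1) + \|\nabla^2 w - \nabla_h^2 w_h\|.
\end{equation*}

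Next I would dispose of the consistency term. Because $w\in H_0^2(\Omega)$ we have $\nabla_h^2 w=\nabla^2 w$, so $\|\nabla^2 w - \nabla_h^2 w_h\| = \|\nabla_h^2(w - w_h)\| = |w - w_h|_{2,h}$, and the Morley--Wang--Xu estimate \eqref{morley_error_1} bounds this by $h|w|_3 + h^2\|f\|$. Substituting into the previous display and regrouping constants yields \eqref{H1bound_sec4}.

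For the $L^2$ estimate \eqref{dual_error_sec4} I would apply the $L^2$ error bound \eqref{dual_error} with the same identification of data --- this is exactly where the regularity conditions \eqref{dual_regu}--\eqref{traceless_regu} enter --- to obtain
\begin{equation*}
\| \boldsymbol{\sigma} - \boldsymbol{\sigma}_h \| \lesssim h^2(| \boldsymbol{\sigma} |_2+|\boldsymbol{p} |_1) + h\|\nabla^2 w - \nabla_h^2 w_h\| + \|\nabla^2 w - \nabla_h^2 w_h\|_{-1}.
\end{equation*}
The term $h\|\nabla^2 w - \nabla_h^2 w_h\| = h|w-w_h|_{2,h}\lesssim h^2|w|_3 + h^3\|f\|$ exactly as above. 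For the negative-norm term I would invoke \eqref{morley_error_2}, which is available under the assumed $H^3$ regularity of the biharmonic problem, giving $\|\nabla^2 w - \nabla_h^2 w_h\|_{-1} = \|\nabla_h^2(w-w_h)\|_{-1}\lesssim h^2|w|_3 + h^3\|f\|$. Collecting all contributions produces \eqref{dual_error_sec4}.

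I do not expect a genuine obstacle here: the decoupled structure means the generalized Stokes solve sees the biharmonic approximation only through the data perturbation $\boldsymbol{g}-\boldsymbol{g}_h = \nabla^2 w - \nabla_h^2 w_h$, with no feedback, so the error analysis of Section~\ref{section3} can be invoked as a black box. The only point requiring attention is the bookkeeping that matches the quantities $\|\boldsymbol{g}-\boldsymbol{g}_h\|$ and $\|\boldsymbol{g}-\boldsymbol{g}_h\|_{-1}$ appearing in \eqref{H1bound} and \eqref{dual_error} with, respectively, the $L^2$- and $H^{-1}$-errors of the broken Hessian of the Morley--Wang--Xu approximation, so that \eqref{morley_error_1} and \eqref{morley_error_2} apply; the $H^3$ regularity hypothesis on the biharmonic equation is needed solely for \eqref{morley_error_2}.
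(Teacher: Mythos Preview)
Your proposal is correct and matches the paper's own proof essentially verbatim: the paper simply states that \eqref{H1bound_sec4} follows from \eqref{H1bound} together with \eqref{morley_error_1}, and that \eqref{dual_error_sec4} follows from \eqref{dual_error} together with \eqref{morley_error_1}--\eqref{morley_error_2}, which is exactly the identification $\boldsymbol{g}=\nabla^2 w$, $\boldsymbol{g}_h=\nabla_h^2 w_h$ you describe. Your additional bookkeeping (noting $\|\boldsymbol{g}-\boldsymbol{g}_h\|=|w-w_h|_{2,h}$ and $\|\boldsymbol{g}-\boldsymbol{g}_h\|_{-1}=\|\nabla_h^2(w-w_h)\|_{-1}$) is accurate and makes the argument explicit.
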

\begin{proof}
Estimate \eqref{H1bound_sec4} follows directly from \eqref{H1bound} and \eqref{morley_error_1}, while \eqref{dual_error_sec4} is obtained by combining \eqref{dual_error} with \eqref{morley_error_1}-\eqref{morley_error_2}.
\end{proof}

Finally, we give the error estimates of $ | u - u_h |_{2, h} $ and $| u - u_h |_{1, h}$.
\begin{theorem}\label{theorem_u_h2error}
Let $\left( w, \boldsymbol{\sigma}, \boldsymbol{p}, 0 ,u \right) \in H_0^2(\Omega) \times H_0^1(\Omega; \mathbb{S} ) \times L^2(\Omega; \mathbb{T}) \times H^1(\Omega; \mathbb{R}^3) \times H_0^2(\Omega)$ be the solution of the decoupled formulation \eqref{continue}, and $ \left( w_h, \boldsymbol{\sigma}_h, \boldsymbol{p}_h, \boldsymbol{r}_h , u_h \right) \in \mathring{W}_h \times \mathring{V}_h^{\mathbb{S}} \times \mathbb{P}_0(\mathcal{T}_h; \mathbb{T}) \times V_h\times \mathring{W}_h $ be the solution of the decoupled finite element method \eqref{decoupledFEM}. Assume that $u,w\in H^3(\Omega)$, $\boldsymbol{\sigma} \in H^2(\Omega; \mathbb{S})$ and $\boldsymbol{p}\in H^1(\Omega; \mathbb{T})$. Then
\begin{equation}\label{estimate_uH2}
| u - u_h |_{2, h} \lesssim h ( |\boldsymbol{\sigma}|_2 +  |\boldsymbol{p}|_1 + |u|_3 + |w|_3 + h \|f\|).
\end{equation}
Furthermore, if the regularity conditions \eqref{dual_regu}-\eqref{traceless_regu} are satisfied and the biharmonic equation admits $H^3$ regularity, we have
\begin{equation}\label{estimate_uH1}
| u - u_h |_{1, h}  \lesssim h^2 ( |\boldsymbol{\sigma}|_2 +  |\boldsymbol{p}|_1 + |u|_3 + |w|_3 + h \|f\|).
\end{equation}
\end{theorem}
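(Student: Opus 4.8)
The plan is to regard \eqref{discrete_4} as the Morley--Wang--Xu discretization of the biharmonic problem \eqref{continue_4} in which the exact load $\boldsymbol{\sigma}$ has been replaced by $\boldsymbol{\sigma}_h$, and to estimate the two resulting error contributions separately. To this end I would introduce the auxiliary function $\tilde{u}_h \in \mathring{W}_h$ determined by $(\nabla_h^2 \tilde{u}_h, \nabla_h^2 \chi) = (\boldsymbol{\sigma}, \nabla_h^2 \chi)$ for all $\chi \in \mathring{W}_h$, and split $u - u_h = (u - \tilde{u}_h) + (\tilde{u}_h - u_h)$. By Theorem~\ref{theorem_equivalence}, $\boldsymbol{\sigma} = \nabla^2 u$ with $u \in H_0^3(\Omega)$, so the broken Hessian satisfies $\nabla_h^2 u = \nabla^2 u = \boldsymbol{\sigma}$; hence $(\nabla_h^2(u - \tilde{u}_h), \nabla_h^2 \chi) = 0$ for all $\chi \in \mathring{W}_h$, i.e.\ $\tilde{u}_h$ is the $|\cdot|_{2,h}$-orthogonal projection of $u$ onto $\mathring{W}_h$ and the primal scheme defining $\tilde{u}_h$ carries no consistency error. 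Subtracting \eqref{discrete_4} from the defining relation of $\tilde{u}_h$ gives $(\nabla_h^2(\tilde{u}_h - u_h), \nabla_h^2 \chi) = (\boldsymbol{\sigma} - \boldsymbol{\sigma}_h, \nabla_h^2 \chi)$ for all $\chi \in \mathring{W}_h$, and testing with $\chi = \tilde{u}_h - u_h$ yields $|\tilde{u}_h - u_h|_{2,h} \le \|\boldsymbol{\sigma} - \boldsymbol{\sigma}_h\|$.

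For \eqref{estimate_uH2}, the projection property gives the quasi-optimal bound $|u - \tilde{u}_h|_{2,h} \le |u - \Pi_h u|_{2,h} \lesssim h|u|_3$, where $\Pi_h u \in \mathring{W}_h$ is the Morley--Wang--Xu interpolant. For the data-perturbation term I would estimate $\|\boldsymbol{\sigma} - \boldsymbol{\sigma}_h\|$ by writing $\boldsymbol{\sigma} - \boldsymbol{\sigma}_h = (\boldsymbol{\sigma} - I_h \boldsymbol{\sigma}) + (I_h \boldsymbol{\sigma} - \boldsymbol{\sigma}_h)$: the interpolation estimate \eqref{interpolation_error_cr} gives $\|\boldsymbol{\sigma} - I_h \boldsymbol{\sigma}\| \lesssim h^2 |\boldsymbol{\sigma}|_2$, while the discrete Poincar\'e inequality \eqref{eq:PoincareCR}, applied componentwise to $I_h \boldsymbol{\sigma} - \boldsymbol{\sigma}_h \in \mathring{V}_h^{\mathbb{S}}$, together with \eqref{H1bound_sec4} gives $\|I_h \boldsymbol{\sigma} - \boldsymbol{\sigma}_h\| \lesssim |I_h \boldsymbol{\sigma} - \boldsymbol{\sigma}_h|_{1,h} \lesssim h(|\boldsymbol{\sigma}|_2 + |\boldsymbol{p}|_1 + |w|_3 + h\|f\|)$. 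Hence $|\tilde{u}_h - u_h|_{2,h} \le \|\boldsymbol{\sigma} - \boldsymbol{\sigma}_h\| \lesssim h(|\boldsymbol{\sigma}|_2 + |\boldsymbol{p}|_1 + |w|_3 + h\|f\|)$, and the triangle inequality yields \eqref{estimate_uH2}.

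For \eqref{estimate_uH1}, I would handle the two parts of the split as follows. For $|\tilde{u}_h - u_h|_{1,h}$, note that $\nabla_h \mathring{W}_h \subseteq \mathring{V}_h^{\text{CR}} \otimes \mathbb{R}^3$ (observed after \eqref{eq:weakcontinuityMWX}), so \eqref{eq:PoincareCR} furnishes the discrete Friedrichs inequality $|\chi|_{1,h} \lesssim |\chi|_{2,h}$ for all $\chi \in \mathring{W}_h$; applying this to $\tilde{u}_h - u_h$ and invoking \eqref{dual_error_sec4} gives $|\tilde{u}_h - u_h|_{1,h} \lesssim |\tilde{u}_h - u_h|_{2,h} \le \|\boldsymbol{\sigma} - \boldsymbol{\sigma}_h\| \lesssim h^2(|\boldsymbol{\sigma}|_2 + |\boldsymbol{p}|_1 + |w|_3 + h\|f\|)$. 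For $|u - \tilde{u}_h|_{1,h}$, I would run the Aubin--Nitsche duality argument for the Morley--Wang--Xu element exactly as in the proof of \eqref{morley_error_12} (cf.\ \cite{Shi1990}): using the assumed $H^3$ regularity of the biharmonic dual problem this gives $|u - \tilde{u}_h|_{1,h} \lesssim h^2 |u|_3$. Adding the two contributions yields \eqref{estimate_uH1}.

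The main obstacle is the Aubin--Nitsche estimate for $|u - \tilde{u}_h|_{1,h}$. Since the broken orthogonality $(\nabla_h^2(u - \tilde{u}_h), \nabla_h^2 \chi) = 0$ holds exactly for all $\chi \in \mathring{W}_h$, the only delicate terms in the duality argument are those pairing $\nabla^2 z$ (with $z$ the solution of the dual biharmonic problem) against the nonconforming function $u - \tilde{u}_h$ across element faces; these must be controlled using the weak continuity \eqref{eq:weakcontinuityMWX}, the interpolation estimate for $\Pi_h z$, and the $H^3$ regularity $|z|_3 \lesssim |u - \tilde{u}_h|_{1,h}$, so that they contribute at order $h^2$. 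All remaining ingredients — the projection property of $\tilde{u}_h$, the interpolation bound for $\Pi_h u$, and the estimates \eqref{H1bound_sec4}--\eqref{dual_error_sec4} — are either elementary or already established, so the assembly of the two estimates is routine.
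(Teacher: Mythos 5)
Your proposal is correct and follows essentially the same route as the paper's proof: it introduces the same auxiliary function $z_h\in\mathring{W}_h$ solving the Morley--Wang--Xu scheme with exact load $\boldsymbol{\sigma}$ (your $\tilde{u}_h$), splits $u-u_h=(u-z_h)+(z_h-u_h)$, uses the exact Galerkin orthogonality (coming from $\boldsymbol{\sigma}=\nabla^2 u$) with quasi-optimality and the standard Aubin--Nitsche duality for the first piece, bounds the second by $|z_h-u_h|_{2,h}\le\|\boldsymbol{\sigma}-\boldsymbol{\sigma}_h\|$ together with the discrete Poincar\'e inequality \eqref{eq:PoincareCR} and $\nabla_h\mathring{W}_h\subseteq\mathring{V}_h^{\text{CR}}\otimes\mathbb{R}^3$, and finally invokes \eqref{H1bound_sec4}--\eqref{dual_error_sec4}. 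The only cosmetic difference is that for \eqref{estimate_uH2} you make the step $\|\boldsymbol{\sigma}-\boldsymbol{\sigma}_h\|\lesssim h(\cdots)$ fully explicit by inserting $I_h\boldsymbol{\sigma}$ and applying the discrete Poincar\'e inequality to $I_h\boldsymbol{\sigma}-\boldsymbol{\sigma}_h\in\mathring{V}_h^{\mathbb{S}}$, whereas the paper compresses this into the single line \eqref{eq:202509040} with the reduction $|z_h-u_h|_{j,h}\lesssim|\boldsymbol{\sigma}-\boldsymbol{\sigma}_h|_{j-1,h}$; both resolve to the same estimates.
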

\begin{proof}
We introduce a new variable $z_h$ to connect equations \eqref{continue_4} and \eqref{discrete_4}. Let $ z_h \in \mathring{W}_h $ satisfy
\begin{equation}\label{H2u1}
\left( \nabla_h^2 z_h, \nabla_h^2 \chi \right) = \left( \boldsymbol{\sigma}, \nabla_h^2 \chi \right), \quad
\forall\,\chi \in \mathring{W}_h.
\end{equation}
Subtracting \eqref{H2u1} from \eqref{discrete_4} gives
\begin{equation*}
\left( \nabla_h^2 \left( u_h - z_h \right), \nabla_h^2 \chi \right) = \left( \boldsymbol{\sigma}_h - \boldsymbol{\sigma}, \nabla_h^2 \chi \right), \quad \forall\,\chi \in \mathring{W}_h.
\end{equation*}
Taking $\chi = u_h - z_h $ in the last equation, we have
\begin{equation*}
| u_h - z_h|_{2,h} \leq \| \boldsymbol{\sigma} - \boldsymbol{\sigma}_h \|.
\end{equation*}
Then using the discrete Poincar\'e inequality \eqref{eq:PoincareCR} and the fact $\nabla_h \mathring{W}_h\subseteq \mathring{V}_h^{\text{CR}}\otimes \mathbb{R}^3$, we get for $j=1,2$ that
\begin{equation}\label{eq:202509040}
| u - u_h |_{j,h} \leq | u - z_h|_{j,h} + |z_h - u_h |_{j,h} \lesssim | u - z_h|_{j,h} + | \boldsymbol{\sigma} - \boldsymbol{\sigma}_h |_{j-1,h}.
\end{equation}

On the other hand, by $\boldsymbol{\sigma}=\nabla^2u$, equation \eqref{H2u1} gives the Galerkin orthogonality
\begin{equation*}
(\nabla_h^2(u-z_h), \nabla_h^2 \chi) = 0, \quad
\forall\,\chi \in \mathring{W}_h,
\end{equation*}
which gives the best-approximation property
\begin{equation*}
|u-z_h|_{2,h}=\inf_{v_h\in\mathring{W}_h}|u-v_h|_{2,h}\lesssim h|u|_3.
\end{equation*}
By a standard duality argument, we also have
\begin{equation*}
|u-z_h|_{1,h}\lesssim h|u-z_h|_{2,h} \lesssim h^2|u|_3.
\end{equation*}
Finally, combining \eqref{eq:202509040}, \eqref{H1bound_sec4}-\eqref{dual_error_sec4}, and the last two inequalities yields \eqref{estimate_uH2}-\eqref{estimate_uH1}.
\end{proof}

\section{Numerical Experiments}\label{section5}
In this section, we present the numerical results of the nonconforming linear element method \eqref{discrete_generalized_stokes} for the generalized tensor-valued Stokes equation~\eqref{continue_generalized_stokes}, and the decoupled finite element method \eqref{decoupledFEM} for the triharmonic equation \eqref{triharmonic_primal}, with all tests conducted on uniform triangulations.

\begin{example}
\rm
Let $\Omega = (0, \pi)^3$. For the generalized tensor-valued Stokes equation \eqref{generalized_stokes}, consider the exact solution
\begin{equation*}
\boldsymbol{\sigma} = \nabla^2(\sin x \sin y \sin z)^3,
\qquad \boldsymbol{r} = 0,
\end{equation*}
\begin{equation*}
\boldsymbol{p} = \curl
\begin{pmatrix}
0 & \sin x \sin y \sin z & 0  \\
\sin x \sin y \sin z & 0 & 0  \\
0 & 0 & 0
\end{pmatrix}.
\end{equation*}
Then $\boldsymbol{\sigma}$ and $\boldsymbol{p}$ satisfy the $\curl$-free and $\div$-free constraints, respectively. From Table~\ref{table_error_stokes}, we observe that numerically
\[
\|\boldsymbol{\sigma} - \boldsymbol{\sigma}_{h}\| = \mathcal{O}(h^2), \quad
|\boldsymbol{\sigma} - \boldsymbol{\sigma}_{h}|_{1, h} = \mathcal{O}(h), \quad
\|\boldsymbol{p} - \boldsymbol{p}_h \| = \mathcal{O}(h),
\]
which are in agreement with the theoretical results \eqref{H1bound} and \eqref{dual_error}.

\begin{table}[htbp]
\setlength{\abovecaptionskip}{0pt}
\setlength{\belowcaptionskip}{0pt}
\caption{ Errors of $ \| \boldsymbol{\sigma} - \boldsymbol{\sigma}_{h} \| $ , $|  \boldsymbol{\sigma} - \boldsymbol{\sigma}_{h} |_{1, h}$ and $ \| \boldsymbol{p} - \boldsymbol{p}_h \| $ }
\label{table_error_stokes}
\begin{tabular}{ccccccc}
\hline
$ h $   \,  &\,   $ \| \boldsymbol{\sigma} - \boldsymbol{\sigma}_{h}\| $ \, & \,   rate    \,  & \,  $| \boldsymbol{\sigma} - \boldsymbol{\sigma}_h |_{1, h}$ \,  &\,  rate   \,  & \,   $\| \boldsymbol{p} - \boldsymbol{p}_h \|$ \,  &\,  rate  \\
\hline
$ 2^{-2}\pi $ \,  &\, 2.297492 \, &  \, -       \, &  \, 14.297881 \,  &\, -   \, & \, 4.810952 \, &\, -   \\
$ 2^{-3}\pi $ \,  &\,  0.677028 \, &  \,  1.7628   \, &  \, 7.623372 \,  &\, 0.9073   \, & \, 2.758073 \, &\,  0.8027  \\
$ 2^{-4}\pi $ \,  &\,  0.181741 \, &  \,  1.8973  \, &  \, 3.872164  \,  &\, 0.9773   \, & \, 1.374580 \, &\, 1.0047 \\
\hline
\end{tabular}
\end{table}
\end{example}

\begin{example}\label{ex1}
\rm
For the triharmonic equation \eqref{triharmonic}, let $\Omega = (0, 1)^3$ and consider the exact solution
\[
u(x, y, z) = \sin^3(\pi x)\sin^3(\pi y)\sin^3(\pi z).
\]

From Tables \ref{table_error_sigma} and \ref{table_error_u}, we observe numerically that
\[
\|\boldsymbol{\sigma} - \boldsymbol{\sigma}_{h}\| = \mathcal{O}(h^2), \,
|\boldsymbol{\sigma} - \boldsymbol{\sigma}_{h}|_{1, h} = \mathcal{O}(h), \,
|u - u_h|_{1, h} = \mathcal{O}(h^2), \,
|u - u_h|_{2, h} = \mathcal{O}(h),
\]
which are consistent with the theoretical results in Theorems \ref{lemma_4_2} and \ref{theorem_u_h2error}.

\begin{table}[htbp]
\setlength{\abovecaptionskip}{0pt}
\setlength{\belowcaptionskip}{0pt}
\caption{ Errors of $ \| \boldsymbol{\sigma} - \boldsymbol{\sigma}_{h} \| $ and $|  \boldsymbol{\sigma} - \boldsymbol{\sigma}_{h} |_{1, h}$}
\label{table_error_sigma}
\begin{center}
\begin{tabular}{ccccccc}
\hline
$ h $   \,  &\,   $ \|\boldsymbol{\sigma} - \boldsymbol{\sigma}_{h} \|$ \, & \,   rate    \,  & \,  $  |\boldsymbol{\sigma} - \boldsymbol{\sigma}_{h} |_{1, h} $ \,  &\,  rate   \\
\hline
$ 2^{-1} $ \,  &\, 11.111193 \, &  \, -       \, &  \, 129.829396\,  &\, -      \\
$ 2^{-2} $ \,  &\, 4.604059 \, &   \, 1.2710  \, &  \, 84.029292 \,  &\, 0.6277 \\
$ 2^{-3} $ \,  &\, 1.332203 \, &   \, 1.7891  \, &  \, 43.453577 \,  &\, 0.9514 \\
$ 2^{-4}$  \,  &\, 0.367327 \, &   \, 1.8587  \, &  \, 21.766158 \,  &\, 0.9974 \\
\hline
\end{tabular}
\end{center}
\end{table}
\begin{table}[htbp]
\setlength{\abovecaptionskip}{0pt}
\setlength{\belowcaptionskip}{0pt}
\caption{ Errors of $ |u-u_{h}|_{1, h}$ and $|u-u_{h}|_{2, h}$}
\label{table_error_u}
\begin{center}
\begin{tabular}{ccccccc}
\hline
$ h $   \,  &\,   $ |u - u_{h}|_{1, h}$ \, & \,   rate    \,  & \,  $| u - u_{h}|_{2, h}$ \,  &\,  rate   \\
\hline
$ 2^{-1} $ \,  &\, 0.682487 \, &  \, -       \, &  \, 10.219628\,  &\, -      \\
$ 2^{-2} $ \,  &\, 0.266772 \, &  \, 1.3552  \, &  \, 6.165173 \,  &\, 0.7291 \\
$ 2^{-3} $ \,  &\, 0.089592 \, &  \, 1.5742  \, &  \, 3.240802 \,  &\, 0.9278 \\
$ 2^{-4} $ \,  &\, 0.027512 \, &  \, 1.7033  \, &  \, 1.624753 \,  &\, 0.9961 \\
\hline
\end{tabular}
\end{center}
\end{table}
\end{example}


%
 \section*{Conflict of interest}

 The authors declare that they have no conflict of interest.

\bibliographystyle{abbrv}
\bibliography{ref}

\end{document}